\newcommand{\bea}{\begin{eqnarray}}
\newcommand{\eea}{\end{eqnarray}}
\def\beaa{\begin{eqnarray*}}
\def\eeaa{\end{eqnarray*}}
\def\ba{\begin{array}}
\def\ea{\end{array}}
\def\be#1{\begin{equation} \label{#1}}
\def \eeq{\end{equation}}
\def\be{{\beta}}
\newtheorem{theorem}{Theorem}[section]
\newtheorem{lemma}[theorem]{Lemma}
\newtheorem{proposition}[theorem]{Proposition}
\newtheorem{corollary}[theorem]{Corollary}
\newtheorem{definition}[theorem]{Definition}
\newtheorem{remark}[theorem]{Remark}
\numberwithin{equation}{section}
\numberwithin{equation}{section}
\times \mathbb{T}$    ]{\textsf{ Cheng, Guo, Yang and Zhao    } \\ }
\begin{document}


\title{On scattering for the cubic defocusing nonlinear Schr\"odinger equation on waveguide $\mathbb{R}^2\times \mathbb{T}$  }
\author{Xing Cheng$^{*}$, Zihua Guo$^{**}$,  Kailong Yang$^{\dagger}$ \and Lifeng Zhao$^{\dagger}$}

\thanks{$^*$ College of Science, Hohai University, Nanjing 210098,\ Jiangsu,\   China. \texttt{chengx@hhu.edu.cn}}

\thanks{$^{**}$ School of Mathematical Sciences, Monash University, \ VIC 3800,\  Australia， \texttt{zihua.guo@monash.edu}  }

\thanks{$^{\dagger}$ Wu Wen-Tsun Key Laboratory of Mathematics, Chinese Academy of Sciences
 and Department of Mathematics, University of Science and Technology of China, Hefei 230026, \ Anhui, \ China. \texttt{ykailong@mail.ustc.edu.cn, zhaolf@ustc.edu.cn} }

\thanks{$^{*}$ Xing Cheng has been partially supported by the
NSF grant of China (No. 11526072, No. 51509073).
}	

\thanks{$^{\dagger}$ Lifeng Zhao has been partially supported by the
NSF grant of China (No. 10901148, No. 11371337) and is also supported by Youth Innovation Promotion Association CAS}	
\begin{abstract}
	In this article, we will show the global wellposedness and scattering of the cubic defocusing nonlinear Schr\"odinger equation on waveguide $\mathbb{R}^2\times \mathbb{T}$ in $H^1$. We first establish the linear profile decomposition in $H^{ 1}(\mathbb{R}^2 \times \mathbb{T})$ motivated by the linear profile decomposition of the mass-critical Schr\"odinger equation in $L^2(\mathbb{R}^2)$. Then by using the
solution of the infinite dimensional vector-valued resonant nonlinear Schr\"odinger system
to approximate the nonlinear profile, we can prove scattering in $H^1$ by using the concentration-compactness/rigidity method.
\bigskip

\end{abstract}

\maketitle

\setcounter{tocdepth}{2}
\pagenumbering{roman} \tableofcontents \newpage \pagenumbering{arabic}

\section{Introduction}
In this article, we will consider the cubic nonlinear Schr\"odinger equation on $\mathbb{R}^2\times \mathbb{T}$:
\begin{equation}\label{eq1.1}
\begin{cases}
i\partial_t u + \Delta_{\mathbb{R}^2\times \mathbb{T}}  u = |u|^2u,\\
u(0) = u_0\in H^1(\mathbb{R}^2\times \mathbb{T}),
\end{cases}
\end{equation}
where $\Delta_{\mathbb{R}_x^2  \times \mathbb{T}_y} $ is the Laplace-Beltrami operator on $\mathbb{R}^2 \times \mathbb{T}$ and $u : \mathbb{R} \times \mathbb{R}^2\times \mathbb{T}  \to \mathbb{C}$ is a complex-valued function.

 The equation \eqref{eq1.1} has the following conserved quantities:
\begin{align*}
\text{mass conservation: }    &\quad   \mathcal{M}(u(t))  = \int_{\mathbb{R}^2 \times \mathbb{T}} |u(t,x,y)|^2\,\mathrm{d}x\mathrm{d}y,\\
\text{   energy conservation:  }     &  \quad \mathcal{E}(u(t))  = \int_{\mathbb{R}^2 \times \mathbb{T}} \frac12 |\nabla u(t,x,y)|^2  + \frac14 |u(t,x,y)|^4 \,\mathrm{d}x\mathrm{d}y,\\
\text{ momentum conservation: } &  \quad   \mathcal{P}(u(t)) = \Im \int_{\mathbb{R}_x^2\times \mathbb{T}_y} \overline{u}(x,y,t) \nabla u(x,y,t)\,\mathrm{d}x\mathrm{d}y.
\end{align*}
The equation \eqref{eq1.1} is a special case of the general nonlinear Schr\"odinger equation on the waveguide $\mathbb{R}^n \times \mathbb{T}^m$:
\begin{equation}\label{eq1.2n}
\begin{cases}
i\partial_t u + \Delta_{\mathbb{R}^n \times \mathbb{T}^m}  u = |u|^{p-1} u,\\
u(0) = u_0\in H^1(\mathbb{R}^n\times \mathbb{T}^m),
\end{cases}
\end{equation}
where $1 < p < \infty$, $m,n \in \mathbb{Z}$, and $m, n\ge 1$. This kind of equations arise as models in the study of nonlinear optics (propagation of laser beams through the atmosphere or in a plasma), especially in nonlinear optics of telecommunications \cite{S,SL}.

We are interested in the range of $p$ for wellposedness and scattering of \eqref{eq1.2n} on $\mathbb{R}^n\times \mathbb{T}^m$.
On one hand, the wellposedness is intuitively determined by the local geometry of the manifold $\mathbb{R}^n \times \mathbb{T}^m$. Because the manifold is locally just $\mathbb{R}^n \times \mathbb{R}^m$, we believe the wellposedness is the same as the Euclidean case, that is when $1<  p \le 1 + \frac4{m+n-2}$ the wellposedness is expected. Just as the Euclidean case, we call the equation energy-subcritical when $ 1< p < 1 + \frac4{m+n-2}$, $m,n \ge 1$ and energy-critical when $p = 1 + \frac4{m+n-2}$, $m + n \ge 3$, $m,n\ge 1$. On the other hand, scattering is expected to be determined by the asymptotic volume growth of ball with radius $r$ in the manifold $\mathbb{R}^2\times \mathbb{T}$ when $r\to \infty$. From the heuristic that linear solutions with frequency $\sim N$ initially localized around the origin will disperse at time t in the ball of radius $\sim N t$, scattering is expected to be partly determined by the asymptotic volume growth of balls with respect to their radius. Since $\inf_{ z \in \mathbb{R}^n \times \mathbb{T}^m } \text{Vol}_{\mathbb{R}^n \times \mathbb{T}^m} (B(z,r)) \sim r^n, \text{ as } r \to \infty$, the linear solution is expected to decay at a rate $\sim t^{-\frac{n}2}$ and based on the scattering theory on $\mathbb{R}^n$,
the solution of \eqref{eq1.2n} is expected to scatter for $ p \ge 1 + \frac4n$. Moreover,
modified scattering in the small data case is expected for $ 1 + \frac2n < p < 1 + \frac4n$ when $n \ge 2$ or $ 2 < p < 5$ when $n = 1$.
 Therefore, regarding heuristic on the wellposedness and scattering, we expect the solution of \eqref{eq1.2n} globally exists and scatters in the range $ 1 + \frac4n \le p \le 1 + \frac4{m+n-2}$. For $ 1 + \frac2n < p < 1 + \frac4n$ when $n \ge 2$ or $ 2 < p < 5$ when $n = 1$, modified scattering is expected as in the Euclidean space case for small data.

The nonlinear Schr\"odinger equations on the waveguide have been intensively studied in the last decades. When $ n= m = 1$, H. Takaoka and N. Tzvetkov \cite{TT} proved global wellposedness for
any data in $L^2$ when $1< p < 3$ and global wellposedness for small data in $L^2$ when $p = 3$. This work heavily relies on the techniques developed by J. Bourgain \cite{B}, where the $X^{s,b}$ space is used to show the wellposedness when $1< p < 3$, and the Hardy-Littlewood circle method is used to establish the Strichartz estimate when $p = 3$ as in \cite{B}. Later, S. Herr, D. Tataru and N. Tzvetkov \cite{HTT1} considered the cubic nonlinear Schr\"odinger equation on
$\mathbb{R}^n \times \mathbb{T}^{m}$ with $n + m = 4$ and $0 \le n \le 3$. In particular,
they proved global wellposedness for small data in $H^s$, $s\ge 1$ in the case $\mathbb{R}^2\times \mathbb{T}^2$,
$\mathbb{R}^3\times \mathbb{T}$ after establishing the corresponding Strichartz estimate, where the trilinear estimates in the context of the $U^p-$ and $V^p-$type spaces is used as in \cite{HTT} to deal with the nonlinear term in the critical space. Recently, Z. Hani, B. Pausader, N. Tzvetkov and N. Visciglia \cite{HPTV} considered the cubic
nonlinear Schr\"odinger equation posed on the spatial domain $\mathbb{R}\times \mathbb{T}^m$, where $1\le m \le 4$.
They proved modified scattering and constructed modified wave operators for small initial and final data.

 In \cite{IP1}, A. D. Ionescu and B. Pausader proved global wellposedness in $H^1$ for the cubic defocusing nonlinear Schr\"odinger equation
 on $\mathbb{R}\times \mathbb{T}^3$.
In the article, they first establish a scale invariant Strichartz estimate
$\|e^{it\Delta_{\mathbb{R}\times \mathbb{T}^3}}P_N f\|_{L^q([-1,1]\times \mathbb{R}\times \mathbb{T}^3)} \lesssim N^{2-\frac6q}\|f\|_{L^2}$, where $q > \frac{18}5$, $N\ge 1$. Then a linear profile decomposition consists of $ e^{-it_k \Delta_{\mathbb{R}\times \mathbb{T}^3}} \left(\lambda_k^{-1}\phi_k(\lambda_k^{-1} \Psi (x-x_k))\right)$, where $\lambda_k\to 0$, $x_k\in \mathbb{R}\times \mathbb{T}^3$, $\phi_k\in \dot{H}^1(\mathbb{R}^4)$, $\Psi $ is a local diffeomorphism from
$\mathbb{R}\times \mathbb{T}^3$ to $\mathbb{R}^4$ and $\tilde{\phi}_k(x-x_k)$, where $\tilde{\phi}_k \in H^1(\mathbb{R}\times \mathbb{T}^3)$, was established similar to \cite{IP,IPS}.
To show the space-time control of the nonlinear profiles associated with the profile $ e^{-it_k \Delta_{\mathbb{R}\times \mathbb{T}^3}} \left(\lambda_k^{-1}\phi_k(\lambda_k^{-1} \Psi (x-x_k))\right)$ in the linear profile decomposition,
the scattering of the energy-critical nonlinear Schr\"odinger equation on $\mathbb{R}^4$ is used.
 N. Tzvetkov and N. Visciglia \cite{TV2} studied the Cauchy problem and large
data scattering for the energy subcritical nonlinear Schr\"odinger equation on $\mathbb{R}^n \times \mathbb{T}$ in $H^1$,
where $n \ge 1$ and $1+  \frac4n < p < 1 + \frac4{n-1}$. In the article,
 by the Strichartz estimate, local wellposedness has been established. After proving the Morawetz estimate, global wellposedness and scattering
have been established.
When $n = 1$ and $m = 2$, Z. Hani and B. Pausader \cite{HP} consider the quintic nonlinear Schr\"odinger equation. They show global wellposedness and large-data scattering in $H^1(\mathbb{R} \times \mathbb{T}^2)$ in the defocusing case. In the article, to obtain the local existence, a local in time Strichartz estimate
$\|e^{it\Delta_{\mathbb{R}\times \mathbb{T}^2}} P_N f\|_{L_{t,x,y}^q([-1,1]\times \mathbb{R}\times \mathbb{T}^2)} \lesssim
N^{\frac32-\frac5q} \|f\|_{L_{x,y}^2(\mathbb{R}\times \mathbb{T}^2)}$, where $N \ge 1$ and $q > 4$
is sufficient. However, to obtain the asymptotic behavior, they need some global in time Strichartz estimate. By using the Strichartz estimate of the $\mathbb{R}$ direction of the whole manifold $\mathbb{R} \times \mathbb{T}^2$ and the Sobolev embedding, there is a natural global in time Strichartz estimate
$\|e^{it\Delta_{\mathbb{R}\times \mathbb{T}^2}} f\|_{L_{t,x,y}^6(\mathbb{R}\times \mathbb{R} \times \mathbb{T}^2)} \lesssim \|f\|_{H^\frac23_{x,y}}$,
this loss of $\frac23$ derivatives does not allow the local wellposedness theory in $H^1$.
Thus, to overcome the difficulty, a global in time integrability and
less derivative loss type Strichartz estimate is needed. By $TT^*$ argument, decomposing the relevant inner product into a diagonal part and a non-diagonal part. The diagonal component leads to the loss of derivatives but gives a contribution that has $l_\gamma^2$ time integrability, while the nondiagonal part loses fewer derivatives but brings slower $l_\gamma^q$($4< q < \infty$) time integrability. They develop a global Strichartz estimate of the form
\begin{align*}
\|e^{it\Delta_{\mathbb{R}\times \mathbb{T}^2}} f\|_{l_\gamma^q L_{x,y,t}^r (\mathbb{R} \times \mathbb{T}^2 \times [\gamma, \gamma + 1])}
\lesssim \|\langle \nabla \rangle^{\frac32 - \frac5r} f\|_{L_{x,y}^2(\mathbb{R}\times \mathbb{T}^2)},
\end{align*}
where $\frac2q + \frac1r = \frac12$, $4 < q,r < \infty$.
To give the existence of the critical element, they establish a linear profile decomposition, where another kind of profile looks like $e^{-it_k\Delta_{\mathbb{R}
\times \mathbb{T}^2}}(e^{i(x-x_k)\xi_k} \lambda_k^{-\frac12} \phi_k(\lambda_k^{-1}(x-x_k,y))) $,
where $\lambda_k\to \infty$, $\xi_k\in \mathbb{R}$, $x_k\in \mathbb{R}$, $\phi_k\in H^{0,1}$ was included in the linear
profile decomposition. To exclude the nonlinear profile associated with the large-scale profile in the linear profile decomposition,
they assume the scattering of the quintic resonant system
\begin{equation*}
i\partial_t u_j + \Delta_{\mathbb{R}} u_j = \sum\limits_{\substack{ ( j_1,j_2,j_3,j_4,j_5)}\in \mathcal{Q}(j)} u_{j_1} \bar{u}_{j_2} u_{j_3} \bar{u}_{j_4} u_{j_5}, \  j\in \mathbb{Z}^2,
\end{equation*}
where
$\mathcal{Q}(j) = \{(j_1,j_2,j_3,j_4,j_5)\in (\mathbb{Z}^2)^5: j_1-j_2 + j_3 - j_4 + j_5 = j, |j_1|^2 - |j_2|^2 + |j_3|^2 - |j_4|^2 + |j_5|^2 = |j|^2\}$.
We also refer to \cite{Ta,TV} on the wellposedness and scattering of the nonlinear Schr\"odinger equation on general waveguide $\mathbb{R}^n\times \mathbb{M}^m$, where $\mathbb{M}^m$
is a compact $m-$dimensional Riemann manifold.

We now give a summary of the known results of NLS on $\mathbb{R}^n\times \mathbb{T}^m$ for $n,m \in \mathbb{Z}$, and $n,m \ge 1 $ in the following table:
\textcolor{red}{
\begin{table}[h]
\begin{center}
\begin{tabular}{|c|c|c|c|c|}
\hline    $ \mathbb{R}^n\times \mathbb{T}^m$                & Results                               \\
\hline   $ \mathbb{R}\times \mathbb{T}$                   & GWP in $L^2$($1< p < 3$), GWP in $L^2$, modified scattering  ($ p  = 3$, small data)     \\
\hline  $\mathbb{R}\times \mathbb{T}^2$               & LWP \& GWP in $H^1$,  modified scattering ($ p = 3$, small data),  Scattering in $H^1$($p  = 5$)    \\
\hline   $\mathbb{R}\times \mathbb{T}^3$              & LWP  \& GWP in $H^1$,  modified scattering ($ p  = 3$, small data)    \\
\hline   $ \mathbb{R}^2 \times \mathbb{T} $        & GWP \& Scattering in $H^1$($3<p  <5$)                   \\
\hline   $ \mathbb{R}^2\times \mathbb{T}^2$           & LWP \& GWP in $H^1$($p  = 3$, small data)                        \\
\hline   $ \mathbb{R}^3 \times \mathbb{T} $        & GWP \& Scattering in $H^1$($\frac73 <p  <3$)          \\
 \hline
\end{tabular}
\end{center}
\caption{We only state the case for $n+m \le 4$.
 }\label{Table1}
\end{table}
}

Our main result addresses the scattering for \eqref{eq1.1} in $H^1(\mathbb{R}^2 \times \mathbb{T})$:
\begin{theorem}[Scattering in $H^1(\mathbb{R}^2 \times \mathbb{T})$] \label{th1.3}
For any initial data $u_0\in H^1(\mathbb{R}^2 \times \mathbb{T})$, there exists a solution $u\in C_t^0 H_{x,y}^1(\mathbb{R}\times \mathbb{R}^2 \times \mathbb{T})$ that is global and scatters in the sense that there exist $u^\pm \in H^1(\mathbb{R}^2\times \mathbb{T})$ such that
\begin{equation*}
\|u(t)- e^{it\Delta_{\mathbb{R}^2\times \mathbb{T}}} u^\pm \|_{H^1(\mathbb{R}^2\times \mathbb{T})} \to 0,  \text{  as }  t\to \pm \infty.
\end{equation*}
\end{theorem}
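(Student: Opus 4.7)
The plan is to implement the Kenig--Merle concentration-compactness/rigidity scheme for \eqref{eq1.1}, noting that the cubic exponent sits exactly on the $\mathbb{R}^2$ mass-critical threshold while remaining energy-subcritical on $\mathbb{R}^2\times\mathbb{T}$ (energy-critical being $p=5$). First I would set up the linear theory on $\mathbb{R}^2\times\mathbb{T}$: Strichartz estimates obtained by coupling the two-dimensional Euclidean estimates with Fourier series in $y\in\mathbb{T}$, a standard local wellposedness theory in $H^1$, and small-data scattering. Combined with the conserved energy (hence a global $H^1$-bound), this reduces Theorem~\ref{th1.3} to ruling out finite blow-up of a suitable critical Strichartz norm at some finite energy threshold $E^*\in(0,\infty)$.

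Second, I would establish a linear profile decomposition in $H^1(\mathbb{R}^2\times\mathbb{T})$. The non-compactness of $e^{it\Delta_{\mathbb{R}^2\times\mathbb{T}}}$ in $H^1$ comes from translations in $(t,x)\in\mathbb{R}\times\mathbb{R}^2$ and, crucially for $p=3$, from rescalings associated with mass-criticality in the two Euclidean directions. Applying the B\'egout--Vargas / Merle--Vega mass-critical profile decomposition modewise in $y$ and reassembling, one obtains in addition to bounded-scale profiles $\tilde\phi_k(x-x_k)$ with $\tilde\phi_k\in H^1(\mathbb{R}^2\times\mathbb{T})$ a family of \emph{large-scale profiles}
\[
e^{-it_k\Delta_{\mathbb{R}^2\times\mathbb{T}}}\Bigl(\tfrac{1}{\lambda_k}\phi_k\Bigl(\tfrac{x-x_k}{\lambda_k},\,y\Bigr)\Bigr),\qquad \lambda_k\to\infty,
\]
with $\phi_k$ bounded in a space modelled on $L^2_x(\mathbb{R}^2;H^1_y(\mathbb{T}))$. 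Orthogonality in $H^1$, energy and linear Strichartz norms follows by the usual truncation and defect arguments.

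Third, one must construct and control the corresponding nonlinear profiles. Bounded-scale profiles with bounded $t_k$ are handled by the local theory and stability; those with $|t_k|\to\infty$ are dealt with by comparison with the small-data $H^1$ scattering theory on $\mathbb{R}^2\times\mathbb{T}$. The delicate case is the large-scale profile. Rescaling $u\mapsto \lambda u(\lambda^2 t,\lambda x,y)$ converts \eqref{eq1.1} into $i\partial_s\tilde u + \Delta_X\tilde u + \lambda^{-2}\partial_y^2\tilde u = |\tilde u|^2\tilde u$; absorbing the small free $y$-evolution via an interaction representation and letting $\lambda_k\to\infty$, the limiting dynamics for the $y$-Fourier modes $u_j(s,X)$ is the infinite-dimensional vector-valued resonant cubic system on $\mathbb{R}^2$,
\[
i\partial_s u_j + \Delta_X u_j = \sum_{\substack{(j_1,j_2,j_3)\in\mathbb{Z}^3\\ j_1-j_2+j_3=j\\ j_1^2-j_2^2+j_3^2=j^2}} u_{j_1}\bar u_{j_2}u_{j_3},\qquad j\in\mathbb{Z}.
\]
The two nontrivial tasks are (a) global wellposedness and scattering for this resonant system for arbitrary initial data in $L^2_x(\mathbb{R}^2;h^1_j)$, which I would obtain by adapting Dodson's mass-critical scattering theorem on $\mathbb{R}^2$ to the vector-valued coupled setting (relying on the pointwise-in-$j$ conservation of mass and an $h^1_j$-valued interaction Morawetz identity), and (b) a quantitative approximation lemma showing that the genuine NLS evolution of large-scale data is close in the relevant Strichartz norms to the rescaled resonant-system solution, globally on $\mathbb{R}$.

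Finally, if Theorem~\ref{th1.3} fails, a Palais--Smale extraction based on the linear profile decomposition together with the nonlinear profile construction above produces a critical element $u_c$: a minimal-energy non-scattering $H^1$-solution whose trajectory is precompact in $H^1(\mathbb{R}^2\times\mathbb{T})$ modulo translations in $x\in\mathbb{R}^2$. I would close the argument with an interaction Morawetz estimate in the two Euclidean directions (averaged over $y\in\mathbb{T}$); combined with the almost-periodicity of $u_c$ this forces $u_c\equiv 0$, contradicting minimality. The principal obstacle of the whole programme is step (a)--(b) above: proving large-data scattering for the infinite-dimensional resonant system at $L^2_x h^1_j$ regularity and establishing the accompanying approximation lemma. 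This plays the role occupied by the scattering theory for the energy-critical NLS on $\mathbb{R}^4$ in the Ionescu--Pausader analysis of $\mathbb{R}\times\mathbb{T}^3$, but here the limiting object is itself a critical PDE of the same difficulty class as Dodson's scalar $\mathbb{R}^2$ theorem rather than a classical subcritical problem.
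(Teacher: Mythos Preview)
Your proposal is correct and follows essentially the same route as the paper: Kenig--Merle concentration-compactness/rigidity, with the linear profile decomposition in $H^1(\mathbb{R}^2\times\mathbb{T})$ built from the $L^2(\mathbb{R}^2)$ mass-critical decomposition, the large-scale nonlinear profiles approximated by solutions of the cubic resonant system \eqref{eq1.733} (whose scattering in $L^2_x h^1$ the paper imports from \cite{YZ} rather than reproving), and the rigidity step carried out via an interaction Morawetz estimate in the $\mathbb{R}^2$ directions. The only notable implementation detail you did not mention is that the paper runs the Palais--Smale argument with respect to the $L^2_x H^1_y$ norm and the scattering norm $L^4_{t,x}H^{1-\epsilon_0}_y$ (so that the profile decomposition and stability theory live at slightly less than one $y$-derivative), and includes bounded Galilean boosts $e^{ix\xi_n^j}$ in the profiles, but these are technical choices within the same overall scheme.
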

 The proof of Theorem \ref{th1.3} is based on the concentration compactness/rigidity method developed by C. E. Kenig and F. Merle \cite{KM}.
For \eqref{eq1.1}, the dispersive effect of the $\mathbb{R}^2$-component is strong enough, to give a
 global Strichartz estimate \cite{TV,TV2}:
\begin{align}\label{eq1.321}
\|e^{it\Delta_{\mathbb{R}^2 \times \mathbb{T}}} f\|_{L_{t,x}^4 H_y^1 \cap L_t^4 W_x^{1,4} L_y^2(\mathbb{R}\times \mathbb{R}^2 \times \mathbb{T})} \lesssim  \|f\|_{H_{x,y}^{1}},
\end{align}
where $(q,r)$ is $L^2$-admissible on $\mathbb{R}^2$.
By the wellposedness and scattering theory, we observe to prove Theorem \ref{th1.3}, we only need to prove the solution satisfies a weaker
space norm $L_{t,x}^4 H_y^{1-\epsilon_0}$, where $0 < \epsilon_0 < \frac12$ is some fixed number used hereafter.
So we only need a linear profile decomposition in $H^1(\mathbb{R}^2 \times \mathbb{T})$ with the remainder in $L_{t,x}^4 H_y^{1-\epsilon_0}$,
which is essentially equivalent to describe the defect of compactness of
\begin{equation}\label{eq }
e^{it\Delta_{\mathbb{R}^2\times \mathbb{T}}}: H^{ 1}_{x, y} (\mathbb{R}^2\times \mathbb{T})\hookrightarrow L_{t,x}^4 H_y^{1-\epsilon_0}(\mathbb{R}\times \mathbb{R}^2 \times\mathbb{T}),
\end{equation}
we can then establish a linear profile decomposition similar to \cite{HP}. However, the argument is mainly based on the argument to establish the linear
profile decomposition of the Sch\"odinger equation
in $L^2(\mathbb{R}^2)$.

The nonlinear profiles can be defined to be the solution of the cubic nonlinear Schr\"odinger equation on $\mathbb{R}^2 \times \mathbb{T}$, with initial data is each profile in the linear profile decomposition. Just as in \cite{HP}, the nonlinear profile can be approximated by applying $e^{it\Delta_{\mathbb{T}}}$ to the rescaling of the solution of the cubic resonant Schr\"odinger system
\begin{equation}\label{eq1.733}
\begin{cases}
i\partial_t u_j + \Delta_{\mathbb{R}^2} u_j = \sum\limits_{j_1,j_2,j_3 \in \mathcal{R}(j) } u_{j_1} \bar{u}_{j_2} u_{j_3},\\
u_j(0) = u_{0,j},\ j\in \mathbb{Z},
\end{cases}
\end{equation}
 where $\mathcal{R}(j) =
 \{ j_1,j_2,j_3 \in \mathbb{Z}: j_1-j_2+j_3= j, \, |j_1|^2 - |j_2|^2 + |j_3|^2 = |j|^2\}$ in the large scale case.
To derive the almost-periodic solution, we also need the following scattering theorem of the cubic resonant Schr\"odinger system, which is proved by using the
argument in \cite{D2} to deal with the cubic nonlinear Schr\"odinger equation in $L^2(\mathbb{R}^2)$:
\begin{theorem}[Scattering of the cubic resonant Schr\"odinger system, \cite{YZ}]\label{th1.2}
Let $E>0$, for any initial data $\vec{u}_0$ satisfying
\begin{equation*}
\|\vec{u}_0\|_{L_x^2 h^1} :=   \left\|\Big(\sum\limits_{j\in \mathbb{Z}} \langle j\rangle^2 |u_{0,j}|^2\Big)^\frac12 \right\|_{L^2(\mathbb{R}^2)} \le E,
\end{equation*}
there exists a global solution to \eqref{eq1.733},
where $\vec{u} = \{u_j\}_{j\in \mathbb{Z}},$
with $\|\vec{u}(t)\|_{L_x^2 h^1} = \|\vec{u}_0\|_{L_x^2 h^1}$ satisfying
\begin{equation}\label{eq5.19new}
\|\vec{u}\|_{L_{t,x}^4 h^1(\mathbb{R}\times \mathbb{R}^2 \times \mathbb{Z})}  := \bigg\| \Big( \sum\limits_{j\in \mathbb{Z}} \langle j\rangle^2 |u_j|^2 \Big)^\frac12   \bigg\|_{L_{t,x}^4(\mathbb{R}\times \mathbb{R}^2)}  \le
C,
\end{equation}
for some constant $C $ depends only on $\|\vec{u}_0\|_{L_x^2 h^1}$.
In addition, the solution scatters in $L_x^2 h^1$ in the sense that there exists $\{u_j^{\pm }\}_j \in  L^2_x h^1$
such that
\begin{equation}
  \bigg \| \Big( \sum\limits_{j\in \mathbb{Z}} \langle j\rangle^2 |  u_j(t) - e^{it\Delta_{\mathbb{R}^2}} u_j^{\pm  }|^2 \Big)^\frac12  \bigg\|_{L^2(\mathbb{R}^2)} \to 0, \text{ as } t\to \pm \infty.
\end{equation}
\end{theorem}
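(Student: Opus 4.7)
The proof follows the Kenig--Merle concentration-compactness/rigidity framework, adapted from Dodson's treatment \cite{D2} of the mass-critical NLS on $\mathbb{R}^2$. A structural simplification in the one-torus setting is that the resonance set collapses: writing $j_1 = j + a$, $j_3 = j + b$ with $a,b \in \mathbb{Z}$, the two resonance conditions force $ab = 0$, hence
\[
\sum_{(j_1,j_2,j_3) \in \mathcal{R}(j)} u_{j_1}\, \bar u_{j_2}\, u_{j_3} \;=\; \bigl(2M(t,x) - |u_j(t,x)|^2\bigr)\, u_j(t,x), \qquad M(t,x) := \sum_{j' \in \mathbb{Z}} |u_{j'}(t,x)|^2.
\]
In particular $\bar u_j N_j$ is pointwise real, so $\|\vec{u}(t)\|_{L^2_x h^s}$ is conserved for every $s \ge 0$, and the system behaves as a family of defocusing cubic NLS on $\mathbb{R}^2$ coupled through the common nonnegative potential $2M$.

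The first step is to set up local well-posedness, small-data scattering, and a perturbation/stability theory in $L^2_x h^1$ with critical Strichartz norm $L^4_{t,x} h^1$. The linear Strichartz estimate for $e^{it\Delta_{\mathbb{R}^2}}$ on $h^1$-valued data follows componentwise via Minkowski, while the nonlinear estimate uses H\"older in $(t,x)$ together with $\langle j\rangle \le \langle j_1\rangle + \langle j_2\rangle + \langle j_3\rangle$ on $\mathcal{R}(j)$. Next, I would establish a linear profile decomposition for sequences bounded in $L^2_x h^1$ with respect to the $L^4_{t,x} h^1$ Strichartz norm, obtained by running the two-dimensional mass-critical $L^2$ profile decomposition componentwise in $j$ and extracting a \emph{common} tuple of mass-critical symmetry parameters on $\mathbb{R}^2$ (scale, spatial translation, Galilean boost, time shift) for every $j$ simultaneously, with a Pythagorean decomposition of both the data norm and the nonlinear Strichartz norm of the remainder.

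Supposing scattering fails, there is a minimal threshold $E_{\ast} \le E$ beyond which the $L^4_{t,x} h^1$ bound breaks down; combining the profile decomposition with the stability theory produces a critical solution $\vec u_c$ of $L^2_x h^1$-norm $E_{\ast}$ that is almost periodic modulo the symmetry group, with associated frequency scale $N(t)$ and spatial center $x(t)$. To preclude $\vec u_c$, I would combine a long-time Strichartz estimate for $P_{>C N(t)}^{(x)}\vec u_c$ in the spirit of \cite{D2} with a frequency-localized interaction Morawetz inequality applied to the scalar mass density $M$. Because $M$ satisfies the standard continuity equation $\partial_t M = -2\nabla_x \cdot \mathrm{Im}\sum_j \bar u_j\, \nabla_x u_j$ (the real potential $2M - |u_j|^2$ drops out of the imaginary part) and the diagonal piece $|u_j|^2 u_j$ contributes a defocusing sign in the virial identity, Dodson's scalar Morawetz computation for mass-critical NLS carries over essentially unchanged to $M$, forcing $\vec u_c \equiv 0$.

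The hard part will be the long-time Strichartz induction in the vector-valued setting: the coupling potential $2M$ is nonlocal in $j$, so iterative control of low-$x$-frequency interactions must be tracked simultaneously with the $h^1$-concentration in $j$. A secondary technical point is ruling out escape to $|j| = \infty$ in the almost periodicity of $\vec u_c$; this follows from the isometry $L^2_x h^1 \simeq L^2_x H^1_y$ via Plancherel in the torus variable, which recasts a $j$-tail as a high $y$-frequency tail and reduces it to a standard frequency-localization argument.
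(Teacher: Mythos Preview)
The paper does not prove this theorem; it is quoted from \cite{YZ} and only described in one line as being ``proved by using the argument in \cite{D2} to deal with the cubic nonlinear Schr\"odinger equation in $L^2(\mathbb{R}^2)$.'' Your proposal follows exactly that template---Kenig--Merle/Dodson concentration-compactness, profile decomposition in $L^2_x h^1$, minimal almost-periodic solution, long-time Strichartz plus frequency-localized interaction Morawetz---so at the level of the overall architecture there is no disagreement to report.

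Where your sketch does add something is the resonance-collapse observation. Your computation that for $j\in\mathbb{Z}$ the constraints $j_1-j_2+j_3=j$, $j_1^2-j_2^2+j_3^2=j^2$ force $ab=0$ (with $j_1=j+a$, $j_3=j+b$) is correct, and the resulting form $N_j=(2M-|u_j|^2)u_j$ with $M=\sum_{j'}|u_{j'}|^2$ genuinely simplifies both the conservation laws (your claim that $\|\vec u(t)\|_{L^2_x h^s}$ is conserved for every $s$ is right, since $\bar u_j N_j$ is pointwise real) and the Morawetz calculation (the coupling enters only through the nonnegative scalar density $M$, so the virial/Morawetz identities reduce to the scalar ones for $M$). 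The reference \cite{YZ}, judging from its title, treats a general ``infinite dimensional vector-valued resonant'' system and presumably does not rely on this collapse, which is special to $\mathbb{T}^1$ and fails already for $\mathbb{T}^2$. So your route is more elementary in this specific case, at the cost of not generalizing; the cited approach is uniform in the torus dimension but must handle the full resonant convolution structure.

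One caution: the long-time Strichartz step and the profile decomposition with a \emph{single} set of symmetry parameters across all $j$ are genuinely nontrivial even with the collapsed nonlinearity, and your sketch only names them. In particular, extracting common $(\lambda_n,\xi_n,x_n,t_n)$ for all components simultaneously is not automatic from the scalar $L^2(\mathbb{R}^2)$ decomposition applied componentwise; one typically runs the refined Strichartz/inverse Strichartz argument directly on the $h^1$-valued Strichartz norm, as is done in the present paper for $L^2_x H^{1-\epsilon_0/2}_y$. That is where the real work lies, and it is not shortened by the resonance collapse.
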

By using the concentration-contradiction, the existence of an almost-periodic solution is given
in $H^1(\mathbb{R}^2 \times \mathbb{T})$.
By the interaction Morawetz quantity, the critical element can be killed.
\begin{remark}
The argument in the proof of Theorem \ref{th1.3} in fact does not rely on the structure of $\mathbb{T}$ in the manifold $\mathbb{R}^2\times \mathbb{T}$.
Thus, Theorem \ref{th1.3} can be generalized to cubic nonlinear Schr\"odinger equation on $\mathbb{R}^2\times \mathbb{M}$,
where $\mathbb{M}$ is a one dimensional compact Riemann manifold as in \cite{TV}.
\end{remark}
The rest of the paper is organized as follows. After introducing some notations and
preliminaries, we give the local wellposedness and small data scattering in Section \ref{se2}. We also give the stability theory in this section.
In Section \ref{se3}, we derive the linear profile decomposition for data in $H^{ 1}(\mathbb{R}^2 \times \mathbb{T})$ and analyze the nonlinear profiles.
In Section \ref{se4}, we reduce the non-scattering in $H^1$ to the existence of an almost-periodic solution and show the extinction
of such an almost-periodic solution in Section \ref{se5}.
\subsection{Notation and Preliminaries}
We will use the notation $X\lesssim Y$ whenever there exists some constant $C>0$ so that $X \le C Y$. Similarly, we will use $X \sim Y$ if
$X\lesssim Y \lesssim X$.

We define the torus to be $\mathbb{T} = \mathbb{R}/(2\pi \mathbb{Z})$.

In the following, we will frequently use some space-time norm, we now give the definition of it.

For any $I \subset \mathbb{R}$, $u(t,x,y): I \times \mathbb{R}^2 \times \mathbb{T} \to \mathbb{C}$, define the space-time norm
\begin{align*}
\|u\|_{L_t^p L_x^q L_y^2(I\times \mathbb{R}^2\times \mathbb{T})}  & = \left\|\left\|\Big( \int_{\mathbb{T}} |u(t,x,y)|^2 \,\mathrm{d}y \Big)^\frac12\right\|_{L_x^q(\mathbb{R}^2)}\right\|_{L_t^p(I)},\\
\|u\|_{H^1_{x,y}}  & = \|\langle \nabla_x \rangle u\|_{L_{x,y}^2} + \|\langle \nabla_y \rangle u\|_{L_{x,y}^2}.
\end{align*}
We will frequently use the partial Fourier transform and partial space-time Fourier transform: For $f(x,y): \mathbb{R}^2 \times \mathbb{T} \to \mathbb{C}$,
\begin{align*}
\mathcal{F}_x f(\xi,y) = \frac1{2\pi} \int_{\mathbb{R}^2} e^{-ix\xi} f(x,y) \,\mathrm{d}x.
\end{align*}
Given $H: \mathbb{R} \times \mathbb{R}^2 \times \mathbb{T}\to \mathbb{C}$, we denote the partial space-time Fourier transform to be
\begin{align*}
\mathcal{F}_{t,x}{H}(\omega,\xi,y) = \frac1{(2\pi)^\frac32} \int_{\mathbb{R}} \int_{\mathbb{R}^2} e^{i\omega t -i\xi x} H(t,x,y) \,\mathrm{d}x\mathrm{d}t.
\end{align*}
We also define the partial Littlewood-Paley projectors $P_{\le N}^x$ and $P_{\ge N}^x$ as follows:
fix a real-valued radially symmetric bump function $\varphi(\xi)$ satisfying
\begin{equation*}
\varphi(\xi) =
\begin{cases}
1, \ |\xi|\le 1,\\
0, \ |\xi|\ge 2,
\end{cases}
\end{equation*}
for any dyadic number $N\in 2^{\mathbb{Z}}$, let
\begin{align*}
\mathcal{F}_x (P_{\le N}^x f)(\xi,y) = \varphi \left(\frac\xi N\right) (\mathcal{F}_x f)(\xi,y),\\
\mathcal{F}_x (P_{\le N}^x f)(\xi,y) = \left(1-\varphi \left(\frac\xi N\right) \right)(\mathcal{F}_x f)(\xi,y)
\end{align*}
We now define the discrete nonisotropic Sobolev space. For $\vec{\phi} = \{\phi_k\}_{k\in \mathbb{Z}}$ a sequence of real-variable functions, we define
\begin{align*}
 H^{s_1}_x h^{s_2} = \left\{ \vec{\phi} = \{ \phi_k\}: \|\vec{\phi}\|_{H^{s_1} h^{s_2}}  =   \bigg \| \Big( \sum\limits_{k\in \mathbb{Z}} \langle k\rangle^{2s_2} |\phi_k (x) |^2  \Big)^\frac12 \bigg\|_{H_x^{s_1}}  < \infty \right \},
\end{align*}
where $s_1,s_2\ge 0$. In particular, when $s_1 = 0$, we denote the space $H_x^{s_1} h^{s_2}$ to be $L_x^2 h^{s_2}$.
For $\psi\in L_x^2 H_y^{1}(\mathbb{R}^2\times \mathbb{T})$, we have the vector $\vec{\psi}= \{\psi_k\} \in L_x^2 h^1 $, where $\psi_k$ is the sequence of periodic Fourier coefficients of $\psi$ defined by
\begin{align*}
\psi_k(x) = \frac1{(2\pi)^\frac12} \int_{\mathbb{T}} \psi(x,y) e^{-iky} \,\mathrm{d}y.
\end{align*}
Throughout the article, $0< \epsilon_0 < \frac12$ is some fixed number.

\section{Local wellposedness and small data scattering}\label{se2}
 In this section, we will review the local wellposedness and small data scattering, that is Theorem \ref{th2.3} and Theorem \ref{th2.4}. These results have been established in \cite{TV,TV2}. We also give the stability theory which will be used in showing the existence of a critical element in Section \ref{se4}.

We first recall the following Strichartz estimate, which is established in \cite{TV}.
\begin{proposition}[Strichartz estimate]
\begin{align}
\left\|e^{it\Delta_{\mathbb{R}^2 \times \mathbb{T}}} f\right\|_{L_t^p L_x^q L_y^2} \lesssim \|f\|_{L_{x,y}^2(\mathbb{R}_x^2\times \mathbb{T}_y)}, \label{eq2.1} \\
\left\|\int_0^t e^{i(t-s)\Delta_{\mathbb{R}^2 \times \mathbb{T}}} F(s,x,y)\,\mathrm{d}s\right\|_{L_t^p L_x^q L_y^2} \lesssim \|F\|_{L_t^{\tilde{p}'} L_x^{\tilde{q}'} L_y^2}, \label{eq2.1'}
\end{align}
where $(p,q),\, (\tilde{p},\tilde{q})$ satisfies $\frac2p + \frac2q = 1$, $\frac2{\tilde{p}} + \frac2{\tilde{q}} = 1$, and $2 < p, \tilde{p} \le \infty$.
\end{proposition}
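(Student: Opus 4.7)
The plan is to reduce both estimates to the classical Strichartz estimate on $\mathbb{R}^2$ by expanding the data as a Fourier series in the periodic variable $y$. Writing $f(x,y) = \frac{1}{\sqrt{2\pi}} \sum_{k \in \mathbb{Z}} f_k(x) e^{iky}$ and using $\Delta_{\mathbb{R}^2 \times \mathbb{T}} = \Delta_{\mathbb{R}^2} + \partial_y^2$ together with $e^{it\partial_y^2}(e^{iky}) = e^{-itk^2} e^{iky}$, the linear evolution separates as
\[
(e^{it\Delta_{\mathbb{R}^2 \times \mathbb{T}}} f)(t,x,y) = \frac{1}{\sqrt{2\pi}} \sum_{k \in \mathbb{Z}} e^{-itk^2} (e^{it\Delta_{\mathbb{R}^2}} f_k)(x) \, e^{iky}.
\]
Applying Plancherel in $y$ eliminates the unit-modulus phases $e^{-itk^2}$ and yields the pointwise identity $\|e^{it\Delta_{\mathbb{R}^2 \times \mathbb{T}}} f\|_{L_y^2}^2 = \sum_{k} |e^{it\Delta_{\mathbb{R}^2}} f_k(x)|^2$.

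For the homogeneous estimate, note that the constraint $\frac{2}{p} + \frac{2}{q} = 1$ with $p > 2$ is precisely the standard two-dimensional Schr\"odinger admissibility, and forces $p, q \geq 2$. Minkowski's inequality therefore lets me pull the $\ell_k^2$ norm outside $L_t^p L_x^q$, giving
\[
\|e^{it\Delta_{\mathbb{R}^2 \times \mathbb{T}}} f\|_{L_t^p L_x^q L_y^2} \leq \Big( \sum_{k \in \mathbb{Z}} \|e^{it\Delta_{\mathbb{R}^2}} f_k\|_{L_t^p L_x^q}^2 \Big)^{1/2} \lesssim \Big( \sum_{k \in \mathbb{Z}} \|f_k\|_{L_x^2}^2 \Big)^{1/2} = \|f\|_{L_{x,y}^2},
\]
where the second inequality is the classical Strichartz estimate on $\mathbb{R}^2$ applied term by term and the equality is Plancherel in $y$.

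For the inhomogeneous bound the same splitting works: the $y$-Fourier coefficients of $\int_0^t e^{i(t-s)\Delta_{\mathbb{R}^2 \times \mathbb{T}}} F(s)\,ds$ are, up to the phase $e^{-i(t-s)k^2}$, the usual Duhamel formula for $e^{it\Delta_{\mathbb{R}^2}}$ applied to $F_k$, so Plancherel in $y$ combined with Minkowski (again valid since $p,q \geq 2$) reduces matters to the scalar inhomogeneous Strichartz estimate on $\mathbb{R}^2$ applied mode by mode. The only step going the opposite direction is putting the $\ell_k^2$ summation back inside the source norm $L_t^{\tilde{p}'} L_x^{\tilde{q}'}$; this is permitted because the dual exponents satisfy $\tilde{p}', \tilde{q}' \leq 2$, so a second Minkowski step gives
\[
\Big( \sum_{k \in \mathbb{Z}} \|F_k\|_{L_t^{\tilde{p}'} L_x^{\tilde{q}'}}^2 \Big)^{1/2} \leq \Big\| \Big( \sum_{k \in \mathbb{Z}} |F_k|^2 \Big)^{1/2} \Big\|_{L_t^{\tilde{p}'} L_x^{\tilde{q}'}} = \|F\|_{L_t^{\tilde{p}'} L_x^{\tilde{q}'} L_y^2}.
\]
There is no real obstacle here: once separation of variables is set up, all analytic content is imported from the $\mathbb{R}^2$ Strichartz estimate, and the only bookkeeping point is to keep track of the direction of each Minkowski exchange (outward on the target side because $p, q \geq 2$, inward on the source side because $\tilde{p}', \tilde{q}' \leq 2$).
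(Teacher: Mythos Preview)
Your proof is correct and is precisely the standard argument: the paper does not give its own proof of this proposition but merely recalls it from \cite{TV}, and the Tzvetkov--Visciglia proof is exactly the separation-of-variables reduction you carry out (Fourier expand in $y$, use that $e^{it\Delta_{\mathbb{R}^2\times\mathbb{T}}}$ acts diagonally on the modes via a unimodular phase, then apply the $\mathbb{R}^2$ Strichartz estimate mode by mode together with Minkowski in the appropriate direction). Nothing is missing.
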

The following nonlinear estimate is useful in showing the local wellposedness.
\begin{proposition}[Nonlinear estimate] \label{pr3.3}
\begin{align}\label{eq2.8new}
\| u_1 u_2 u_3\|_{L_t^\frac43 L_x^\frac43 H_y^{1-\epsilon_0}}   \lesssim  \| u_1\|_{L_t^4 L_x^4 H_y^{1-\epsilon_0}}  \|  u_2\|_{L_t^4 L_x^4 H_y^{1-\epsilon_0}} \|  u_3\|_{L_t^4 L_x^4 H_y^{1-\epsilon_0}}.
\end{align}
\end{proposition}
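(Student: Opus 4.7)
The plan is to reduce the estimate to two essentially independent ingredients: the algebra property of $H_y^{1-\epsilon_0}$ on $\mathbb{T}$, and Hölder in the space-time variables $(t,x)$.

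First I would record the key observation that, since $0 < \epsilon_0 < \frac12$, the regularity index $s := 1-\epsilon_0$ satisfies $s > \frac12 = \frac{1}{2}\dim \mathbb{T}$. By the standard Sobolev embedding $H^s(\mathbb{T}) \hookrightarrow L^\infty(\mathbb{T})$ and the Kato–Ponce / fractional Leibniz inequality on the one-dimensional torus, $H^s(\mathbb{T})$ is a Banach algebra, so that for functions $f,g \in H_y^{s}(\mathbb{T})$ one has $\|fg\|_{H_y^{s}} \lesssim \|f\|_{H_y^{s}} \|g\|_{H_y^{s}}$. Iterating this bound, we obtain the pointwise-in-$(t,x)$ inequality
\begin{equation*}
\|u_1(t,x,\cdot) u_2(t,x,\cdot) u_3(t,x,\cdot)\|_{H_y^{1-\epsilon_0}} \lesssim \prod_{i=1}^{3} \|u_i(t,x,\cdot)\|_{H_y^{1-\epsilon_0}}.
\end{equation*}

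Next I would introduce the scalar auxiliary functions $f_i(t,x) := \|u_i(t,x,\cdot)\|_{H_y^{1-\epsilon_0}}$ and apply Hölder in the space variable $x \in \mathbb{R}^2$ with exponents $\frac{3}{4} = \frac{1}{4}+\frac{1}{4}+\frac{1}{4}$, followed by Hölder in $t$ with the same decomposition. This yields
\begin{equation*}
\|f_1 f_2 f_3\|_{L_t^{4/3} L_x^{4/3}} \lesssim \prod_{i=1}^{3} \|f_i\|_{L_t^4 L_x^4} = \prod_{i=1}^{3} \|u_i\|_{L_t^4 L_x^4 H_y^{1-\epsilon_0}},
\end{equation*}
which, combined with the pointwise bound above, gives \eqref{eq2.8new}.

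The only step that requires any real input is the algebra property of $H^{1-\epsilon_0}(\mathbb{T})$; everything else is bookkeeping via Hölder. Since $1-\epsilon_0 > \frac12$ lies strictly above the endpoint $s = \frac12$ of the Sobolev embedding on $\mathbb{T}$, the algebra property is comfortably available and the constraint $\epsilon_0 < \frac12$ in the definition of $\epsilon_0$ is precisely what is needed. There is no genuine obstacle; the estimate serves mainly to certify that the work space $L_t^4 L_x^4 H_y^{1-\epsilon_0}$ is compatible with the cubic nonlinearity, and Proposition \ref{pr3.3} will later be paired with the Strichartz estimate \eqref{eq1.321} to close the fixed-point argument for local well-posedness and stability.
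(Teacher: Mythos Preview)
Your proof is correct and follows essentially the same approach as the paper: invoke the algebra property of $H_y^{1-\epsilon_0}(\mathbb{T})$ (valid because $1-\epsilon_0>\tfrac12$) to get the pointwise-in-$(t,x)$ bound, then apply H\"older in $t$ and $x$ with exponents $4,4,4$. If anything, you provide more justification for the algebra step than the paper, which simply asserts it.
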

\begin{proof}
Since $H^{1-\epsilon_0}_y(\mathbb{T})$ is an algebra, we have
\begin{equation*}
\| u_1 u_2 u_3\|_{H_y^{1-\epsilon_0}} \lesssim \|  u_1 \|_{H_y^{1-\epsilon_0}} \|  u_2\|_{H_y^{1-\epsilon_0}} \|u_3\|_{H_y^{1-\epsilon_0}}.
\end{equation*}
By the H\"older inequality,
\begin{align*}
   \big\| \|  u_1 \|_{H_y^{1-\epsilon_0}}  \|  u_2  \|_{H_y^{1-\epsilon_0}}   \|  u_3 \|_{H_y^{1-\epsilon_0}}  \big\|_{L_t^\frac43 L_x^\frac43}
\lesssim   \  \|  u_1\|_{L_t^4 L_x^4 H_y^{1-\epsilon_0}}  \|  u_2\|_{L_t^4 L_x^4 H_y^{1-\epsilon_0}} \|  u_3\|_{L_t^4 L_x^4 H_y^{1-\epsilon_0}},
\end{align*}
so we have \eqref{eq2.8new}.
\end{proof}
By the Strichartz estimate and the nonlinear estimate, we can give the local wellposedness and small data scattering in $L_x^2 H_y^1(\mathbb{R}^2\times \mathbb{T}  )$ and $H_{x,y}^{1}(\mathbb{R}^2\times \mathbb{T})$ easily.
\begin{theorem}[Local wellposedness]\label{th2.3}
For any $E>0$, suppose that $\|u_0\|_{L_x^2 H_{y}^{1 }(\mathbb{R}^2 \times \mathbb{T})} \le E$, there exists $\delta_0 = \delta_0(E)>0$ such that if
\begin{equation*}
\|e^{it\Delta_{\mathbb{R}^2\times \mathbb{T}}} u_0 \|_{L_t^4 L_x^4 H_y^{1-\epsilon_0}  (I\times \mathbb{R}^2\times \mathbb{T})} \le   \delta_0,
\end{equation*}
where $I$ is a time interval, then there exits a unique solution $u\in C_t^0  L_x^2 H_{ y}^{ 1 }(I\times \mathbb{R}^2 \times \mathbb{T})$ of \eqref{eq1.1}
satisfying
\begin{align*}
\|u\|_{L_t^4 L_x^4 H_y^{1-\epsilon_0} } & \le 2 \|e^{it\Delta_{ \mathbb{R}^2\times \mathbb{T} }} u_0\|_{L_t^4 L_x^4 H_y^{1-\epsilon_0}   },\\
\|u\|_{L_t^\infty L_x^2 H_{ y}^{ 1 }  }  & \le C\|u_0\|_{ L_x^2 H_{ y}^{ 1 }  }.
\end{align*}
Moreover, if $u_0\in H_{x,y}^1(\mathbb{R}^2 \times \mathbb{T})$, then $u \in C_t^0 H_{x,y}^1(I \times \mathbb{R}^2 \times \mathbb{T})$ with
\begin{align*}
\|u\|_{L_t^\infty H_{x,y}^1} \le C(E)\|u_0\|_{H_{x,y}^1},\  \|u\|_{L_t^4 L_x^4 H_y^1 \cap L_t^4 W_x^{1,4} L_y^2} \le C(\|u_0\|_{H^1}).
\end{align*}
\end{theorem}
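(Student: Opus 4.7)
The plan is to run a contraction-mapping argument for the Duhamel map
\[
\Phi(u)(t) = e^{it\Delta_{\mathbb{R}^2\times\mathbb{T}}}u_0 - i\int_0^t e^{i(t-s)\Delta_{\mathbb{R}^2\times\mathbb{T}}}(|u|^2 u)(s)\,\mathrm{d}s
\]
on a complete metric ball built from the norms $L_t^4 L_x^4 H_y^{1-\epsilon_0}$ (controlling the smallness hypothesis) and $L_t^\infty L_x^2 H_y^1\cap L_t^4 L_x^4 H_y^1$ (controlling the energy), with uniqueness inherited from the contraction and the $H^1_{x,y}$ estimates obtained by persistence of regularity.

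Concretely, I would set
\[
X_{I,\eta,M}=\bigl\{u\in C_t^0 L_x^2 H_y^1(I):\ \|u\|_{L_t^4 L_x^4 H_y^{1-\epsilon_0}}\le\eta,\ \|u\|_{L_t^\infty L_x^2 H_y^1\cap L_t^4 L_x^4 H_y^1}\le M\bigr\},
\]
with $\eta=2\delta_0$ and $M=2CE$, endowed with the weak $L_t^4 L_x^4 H_y^{1-\epsilon_0}$ metric. Applying \eqref{eq2.1}, \eqref{eq2.1'} with the $L^2(\mathbb{R}^2)$-admissible pairs $(4,4)$ and $(\infty,2)$ (dually $(\tfrac43,\tfrac43)$) reduces the two a priori bounds to estimating the nonlinearity in $L_t^{4/3}L_x^{4/3}H_y^{1-\epsilon_0}$ and in $L_t^{4/3}L_x^{4/3}H_y^1$ respectively. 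Proposition \ref{pr3.3} handles the first, yielding $\|\Phi(u)\|_{L_t^4 L_x^4 H_y^{1-\epsilon_0}}\le\delta_0+C\eta^3\le 2\delta_0$ once $\delta_0$ is small. For the second, I would upgrade \eqref{eq2.8new} via a fractional Leibniz rule on $\mathbb{T}$: since $H^{1-\epsilon_0}(\mathbb{T})$ is an algebra embedded in $L^\infty(\mathbb{T})$ (as $1-\epsilon_0>\tfrac12$), one has $\||u|^2u\|_{H_y^1}\lesssim\|u\|_{H_y^{1-\epsilon_0}}^2\|u\|_{H_y^1}$, and H\"older in $(t,x)$ gives $\||u|^2u\|_{L_t^{4/3}L_x^{4/3}H_y^1}\lesssim\eta^2 M$. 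The energy norm of $\Phi(u)$ is thus at most $CE+C\eta^2 M\le M$ provided $C\eta^2=4C\delta_0^2\le\tfrac12$. Contraction in the weak metric follows from the same cubic estimate applied to $|u|^2u-|v|^2v$, and the unique fixed point is the desired solution.

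For the $H^1_{x,y}$ persistence statement, $\nabla_x$ commutes with $\Delta_{\mathbb{R}^2\times\mathbb{T}}$; differentiating the Duhamel formula, using $|\nabla_x(|u|^2u)|\lesssim|u|^2|\nabla_x u|$ together with $H_y^{1-\epsilon_0}\hookrightarrow L_y^\infty$, one obtains $\|\nabla_x(|u|^2u)\|_{L_t^{4/3}L_x^{4/3}L_y^2}\lesssim\|u\|_{L_t^4L_x^4 H_y^{1-\epsilon_0}}^2\|\nabla_x u\|_{L_t^4L_x^4 L_y^2}$, and bootstrapping Strichartz on $\|\nabla_x u\|_{L_t^\infty L_x^2 L_y^2\cap L_t^4 L_x^4 L_y^2}$ closes by the same smallness. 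The only non-mechanical input is the fractional Leibniz bound just mentioned; it is what allows the single small quantity --- the free-evolution $L_t^4 L_x^4 H_y^{1-\epsilon_0}$ norm --- to absorb two cubic factors at top regularity, so that the contraction can coexist with the $H_y^1$ and $H^1_{x,y}$ estimates. Once that is in hand, the rest is a textbook fixed-point plus persistence-of-regularity argument.
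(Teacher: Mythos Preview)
Your proposal is correct and is exactly the standard contraction-mapping argument the paper has in mind; the paper itself does not spell out a proof of Theorem~\ref{th2.3}, merely stating that it follows from the Strichartz estimate \eqref{eq2.1}--\eqref{eq2.1'} and the nonlinear estimate of Proposition~\ref{pr3.3} (with a reference to \cite{TV,TV2}). Your key product estimate $\||u|^2u\|_{H_y^1}\lesssim\|u\|_{H_y^{1-\epsilon_0}}^2\|u\|_{H_y^1}$ is precisely what the paper uses explicitly in the proof of Theorem~\ref{th2.946}, so the two are fully aligned.
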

 Arguing as in \cite{C,T2}, we can easily obtain the global wellposedness by the Strichartz estimate together with the conservation of mass and energy:
\begin{theorem}[Global wellposedness in $H^1$] \label{th2.433}
 For any $E>0$, if $\|u_0\|_{H_{x,y}^1 (\mathbb{R}^2 \times \mathbb{T})} \le E$, there exists a unique global solution
$u\in C_t^0H_{x,y}^{1} \cap L_{t,x}^4 H_y^1 \cap L_t^4 W_x^{1,4} L_y^2(\mathbb{R} \times \mathbb{R}^2 \times \mathbb{T})$ of \eqref{eq1.1}
satisfying
\begin{align*}
\|u\|_{L_t^4 L_x^4 H_y^{1 } \cap L_t^4 W_x^{1,4} L_y^2} & \le C(E) \|u_0\|_{H^1},\\
\|u\|_{L_t^\infty H_{x,y}^{ 1 }  }  & \le C(E) \|u_0\|_{H_{x,y}^{ 1 } }.
\end{align*}
\end{theorem}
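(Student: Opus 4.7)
The plan is to combine Theorem \ref{th2.3} with the a priori $H^1$ bound that follows from mass and energy conservation (using the defocusing sign) and then iterate local wellposedness on a chain of time intervals whose lengths depend only on $E$. Upgrading the space-time norm from the subcritical $L_{t,x}^4 H_y^{1-\epsilon_0}$ level used in the contraction of Theorem \ref{th2.3} to the full $L_{t,x}^4 H_y^1 \cap L_t^4 W_x^{1,4} L_y^2$ level requires a separate bootstrap on each small-norm subinterval.

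First I would show that the lifespan produced by Theorem \ref{th2.3} depends only on $\|u_0\|_{H_{x,y}^1}$. On any short interval $I=[t_0,t_0+\tau]$, H\"older in time followed by the Sobolev embedding $H_x^{1/2+}(\mathbb{R}^2)\hookrightarrow L_x^4$ and the unitarity of $e^{it\Delta_{\mathbb{R}^2\times\mathbb{T}}}$ on $H^1$ yield
\[
\|e^{it\Delta_{\mathbb{R}^2\times\mathbb{T}}} u_0\|_{L_t^4 L_x^4 H_y^{1-\epsilon_0}(I)}\le\tau^{1/4}\|e^{it\Delta_{\mathbb{R}^2\times\mathbb{T}}}u_0\|_{L_t^\infty L_x^4 H_y^{1-\epsilon_0}}\lesssim\tau^{1/4}\|u_0\|_{H_{x,y}^1},
\]
so choosing $\tau=\tau(E)\sim(\delta_0/E)^4$ verifies the hypothesis of Theorem \ref{th2.3}. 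Mass and energy conservation together with Gagliardo--Nirenberg give $\|u(t)\|_{H_{x,y}^1}^2\lesssim\mathcal{M}(u_0)+\mathcal{E}(u_0)\lesssim\|u_0\|_{H^1}^2+\|u_0\|_{H^1}^4\le C(E)$ throughout the interval of existence, so Theorem \ref{th2.3} can be re-applied at the endpoints $t_k=k\,\tau(C(E))$ and the solution extends to a global element of $C_t^0 H_{x,y}^1$.

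For the space-time bounds at $H_y^1$ and $W_x^{1,4}$ regularity on a compact interval $J$, I would partition $J$ into finitely many subintervals $\{I_k\}$ (the number depending only on $E$ and $|J|$) on which $\|u\|_{L_{t,x}^4 H_y^{1-\epsilon_0}(I_k)}\le\eta$ for a small $\eta$ to be chosen. Duhamel combined with the inhomogeneous Strichartz estimate \eqref{eq2.1'} produces
\[
\|u\|_{L_t^4 L_x^4 H_y^1(I_k)}\le C\|u(t_k)\|_{H_{x,y}^1}+C\||u|^2u\|_{L_t^{4/3} L_x^{4/3} H_y^1(I_k)}.
\]
Using the algebra property of $H_y^{1-\epsilon_0}(\mathbb{T})$ together with the embedding $H_y^{1-\epsilon_0}(\mathbb{T})\hookrightarrow L_y^\infty(\mathbb{T})$, valid since $1-\epsilon_0>\frac12$, one places two factors in $L_y^\infty$ and one in $H_y^1$ to obtain $\||u|^2u\|_{H_y^1}\lesssim\|u\|_{H_y^{1-\epsilon_0}}^2\|u\|_{H_y^1}$; H\"older in $(t,x)$ then gives $\||u|^2u\|_{L_t^{4/3} L_x^{4/3} H_y^1}\lesssim\|u\|_{L_{t,x}^4 H_y^{1-\epsilon_0}}^2\|u\|_{L_{t,x}^4 H_y^1}$. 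For $\eta$ sufficiently small this term is absorbed, yielding $\|u\|_{L_{t,x}^4 H_y^1(I_k)}\le C(E)$. The $L_t^4 W_x^{1,4} L_y^2$ estimate proceeds in parallel after distributing $\nabla_x$ on the cubic nonlinearity and placing the two undifferentiated factors in $L_y^\infty$.

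The main technical subtlety lies in this last step: since the nonlinear bound of Proposition \ref{pr3.3} used to close Theorem \ref{th2.3} is at the subcritical $H_y^{1-\epsilon_0}$ level (the algebra product rule at $H_y^1$ is not of the form needed to contract), the upgrade to $H_y^1$ requires the new H\"older splitting above, and this is the only place where the condition $\epsilon_0\in(0,\frac12)$ is used sharply. The resulting estimate holds on any compact time interval with constant depending on $E$ and the length of the interval; the global-in-time Strichartz bound on all of $\mathbb{R}$ with constant depending only on $E$ — which would yield scattering directly — is not claimed here but is the content of Theorem \ref{th1.3}, proved later via concentration-compactness and the interaction Morawetz estimate.
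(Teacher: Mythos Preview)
Your approach is the standard one the paper invokes by citing \cite{C,T2}, and your upgrade step in the final paragraph exactly parallels the argument the paper later spells out in the proof of Theorem \ref{th2.946}. There is, however, one genuine technical slip in your first step.

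The embedding you implicitly use, $H^1_{x,y}(\mathbb{R}^2\times\mathbb{T})\hookrightarrow L_x^4 H_y^{1-\epsilon_0}$, which underlies the chain ``H\"older to $L_t^\infty$, then Sobolev $H_x^{1/2+}\hookrightarrow L_x^4$, then unitarity on $H^1$'', actually \emph{fails}. Take $u(x,y)=\phi_N(x)e^{iNy}$ with $\phi_N$ an $L^2$-normalized bump at $x$-frequency $N$; then $\|u\|_{L_x^4 H_y^{1-\epsilon_0}}\sim N^{1-\epsilon_0}\cdot N^{1/2}=N^{3/2-\epsilon_0}$ while $\|u\|_{H^1_{x,y}}\sim N$, and since $\epsilon_0<\tfrac12$ the ratio blows up. On the Fourier side, the required inequality $\langle\xi\rangle^{1/2+}\langle j\rangle^{1-\epsilon_0}\lesssim\langle(\xi,j)\rangle$ would need the exponents to sum to at most $1$, and they do not.

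The fix is easy: instead of going all the way to $L_t^\infty$, use H\"older in time only up to a non-endpoint exponent $p\in\big(4,\tfrac{4}{1-2\epsilon_0}\big]$, apply the Strichartz pair $(p,q)$ with $\tfrac2p+\tfrac2q=1$, and then Sobolev $W_x^{s,q}\hookrightarrow L_x^4$ with $s=\tfrac12-\tfrac2p\le\epsilon_0$. This yields
\[
\|e^{it\Delta}u_0\|_{L_t^4 L_x^4 H_y^{1-\epsilon_0}(I)}\lesssim\tau^{1/4-1/p}\|u_0\|_{H_x^{s} H_y^{1-\epsilon_0}},
\]
and now $s+(1-\epsilon_0)\le 1$, so $\langle\xi\rangle^{s}\langle j\rangle^{1-\epsilon_0}\le s\langle\xi\rangle+(1-s)\langle j\rangle\lesssim\langle(\xi,j)\rangle$ by Young's inequality and hence $H^1_{x,y}\hookrightarrow H_x^{s} H_y^{1-\epsilon_0}$. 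With this repair your argument goes through. Your closing remark---that the global-in-time $L_{t,x}^4 H_y^1$ bound literally stated in the theorem would already imply scattering and must be read as local in time---is a correct reading of the text.
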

\begin{theorem}[Small data scattering in $L_x^2 H_y^1$]\label{th2.4}
There exists $\delta >0$ such that if $u_0\in H_{x,y}^1$ and $\|u_0\|_{L_x^2 H_{ y}^{1}(\mathbb{R}_x^2 \times \mathbb{T}_y)} \le \delta$,
\eqref{eq1.1} has an unique global solution
$u(t,x,y) \in C_t^0 L_x^2 H_{ y}^{ 1} \cap L_t^4 L_x^4 H_y^1  $ and there exist $u_\pm \in L_x^2 H_{ y}^{ 1}(\mathbb{R}^2 \times \mathbb{T})$ such that
\begin{equation}
\|u(t,x,y)- e^{it\Delta_{\mathbb{R}^2 \times \mathbb{T}}} u_\pm(x,y)\|_{L_x^2 H_{ y}^{ 1}} \to 0, \ \text{ as } t\to \pm \infty.
\end{equation}
\end{theorem}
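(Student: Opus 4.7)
The plan is to reduce the assertion to Theorem~\ref{th2.3} by observing that smallness of the $L_x^2 H_y^1$ norm of the data forces global smallness of the free evolution in the Strichartz norm that drives the local theory. Since $\langle\nabla_y\rangle$ commutes with $e^{it\Delta_{\mathbb{R}^2\times\mathbb{T}}}$, applying the Strichartz estimate \eqref{eq2.1} with the $L^2$-admissible pair $(p,q)=(4,4)$ to $\langle\nabla_y\rangle^{1-\epsilon_0} u_0$ yields
\begin{equation*}
\|e^{it\Delta_{\mathbb{R}^2\times\mathbb{T}}} u_0\|_{L_t^4 L_x^4 H_y^{1-\epsilon_0}(\mathbb{R}\times\mathbb{R}^2\times\mathbb{T})} \lesssim \|u_0\|_{L_x^2 H_y^{1-\epsilon_0}} \le \|u_0\|_{L_x^2 H_y^1} \le \delta.
\end{equation*}
Choosing $\delta$ small enough that this free-evolution norm lies below the threshold $\delta_0(\delta)$ of Theorem~\ref{th2.3}, and taking $I=\mathbb{R}$, one obtains directly a global solution $u\in C_t^0 L_x^2 H_y^1 \cap L_t^4 L_x^4 H_y^{1-\epsilon_0}(\mathbb{R}\times\mathbb{R}^2\times\mathbb{T})$ with the quantitative bounds provided by that theorem. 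Uniqueness follows from the same fixed-point contraction.

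Next I would upgrade the control from $H_y^{1-\epsilon_0}$ to $H_y^1$. Because $H^1(\mathbb{T})$ is also an algebra in one spatial dimension, the proof of Proposition~\ref{pr3.3} carries over verbatim with $H_y^{1-\epsilon_0}$ replaced by $H_y^1$, giving
\begin{equation*}
\|u_1 u_2 u_3\|_{L_t^{4/3} L_x^{4/3} H_y^1} \lesssim \prod_{j=1}^3 \|u_j\|_{L_t^4 L_x^4 H_y^1}.
\end{equation*}
Combined with \eqref{eq2.1}--\eqref{eq2.1'}, a standard Picard iteration for the Duhamel map closes in $L_t^\infty L_x^2 H_y^1 \cap L_t^4 L_x^4 H_y^1(\mathbb{R}\times\mathbb{R}^2\times\mathbb{T})$ under the same smallness hypothesis, so that in fact $\|u\|_{L_t^4 L_x^4 H_y^1(\mathbb{R}\times\mathbb{R}^2\times\mathbb{T})} \lesssim \delta$.

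Finally, the scattering states are produced by the standard Cook argument. Defining
\begin{equation*}
u_\pm := u_0 - i\int_0^{\pm\infty} e^{-is\Delta_{\mathbb{R}^2\times\mathbb{T}}}(|u|^2 u)(s)\,\mathrm{d}s,
\end{equation*}
Duhamel's formula rewrites the desired convergence as
\begin{equation*}
\|e^{-it\Delta_{\mathbb{R}^2\times\mathbb{T}}} u(t) - u_\pm\|_{L_x^2 H_y^1} = \left\|\int_t^{\pm\infty} e^{-is\Delta_{\mathbb{R}^2\times\mathbb{T}}}(|u|^2 u)(s)\,\mathrm{d}s\right\|_{L_x^2 H_y^1},
\end{equation*}
which by the dual Strichartz estimate \eqref{eq2.1'} and the trilinear bound above is dominated by $\|u\|_{L_t^4 L_x^4 H_y^1([t,\pm\infty)\times\mathbb{R}^2\times\mathbb{T})}^3$; this tail vanishes as $|t|\to\infty$ thanks to the global $L_t^4 L_x^4 H_y^1$ integrability obtained in the previous step. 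Since $e^{it\Delta_{\mathbb{R}^2\times\mathbb{T}}}$ is an isometry on $L_x^2 H_y^1$, this yields the claimed scattering. I anticipate no serious obstacle; the only mild point is the bootstrap from $H_y^{1-\epsilon_0}$ to $H_y^1$, which is costless because both the Strichartz estimate \eqref{eq2.1} and the algebra structure of $H_y^s(\mathbb{T})$ for $s>\tfrac12$ respect $y$-derivatives uniformly.
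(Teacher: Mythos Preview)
Your argument is correct and follows the standard route that the paper implicitly relies on; the paper itself does not give a proof of this theorem but merely cites \cite{TV,TV2}, having set up precisely the Strichartz estimates \eqref{eq2.1}--\eqref{eq2.1'} and the algebra structure of $H_y^s(\mathbb{T})$ for $s>\tfrac12$ that you use. One minor simplification: the detour through Theorem~\ref{th2.3} in $H_y^{1-\epsilon_0}$ followed by an upgrade is unnecessary, since (as you note) $H_y^1(\mathbb{T})$ is already an algebra and the Picard iteration closes directly in $L_t^\infty L_x^2 H_y^1 \cap L_t^4 L_x^4 H_y^1$ under the smallness hypothesis $\|u_0\|_{L_x^2 H_y^1}\le\delta$; the $H_y^{1-\epsilon_0}$ space is only needed elsewhere in the paper to make the scattering norm strictly weaker than the conserved quantity.
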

We now give the stability theory in $ L_x^2 H_y^{1-\epsilon_0 }(\mathbb{R}^2 \times \mathbb{T})$.
\begin{theorem}[Stability theory]\label{le2.6}
Let $I$ be a compact interval and let $\tilde{u}$ be an approximate solution to $i\partial_t u + \Delta_{\mathbb{R}^2 \times \mathbb{T}} u =|u|^2 u$ in the
sense that
$i\partial_t \tilde{u} + \Delta_{\mathbb{R}^2\times \mathbb{T}} \tilde{u} = |\tilde{u}|^2 \tilde{u} + e$
for some function $e$. Assume that
\begin{align*}
\|\tilde{u}\|_{ L_t^\infty L_x^2 H_y^{1-\epsilon_0 } } \le M, \ \quad
\|\tilde{u}\|_{L_t^4 L_x^4 H_y^{1-\epsilon_0} } \le L
\end{align*}
for some positive constants $M$ and $L$. Let $t_0\in I$ and let $u(t_0)$ obey
\begin{equation}\label{eq3.32}
\|u(t_0)- \tilde{u}(t_0)\|_{ L_x^2 H_y^{1-\epsilon_0 } } \le M'
\end{equation}
for some $M'> 0$. Moreover, assume the smallness conditions
\begin{align}
\|e^{i(t-t_0)\Delta_{\mathbb{R}^2\times \mathbb{T}}} (u(t_0)-\tilde{u}(t_0))\|_{L_t^4 L_x^4 H_y^{1-\epsilon_0}  } \le \epsilon, \label{eq3.33}\\
\|e\|_{L_t^\frac43 L_x^\frac43 H_y^{1-\epsilon_0}  } \le \epsilon, \label{eq3.34}
\end{align}
for some $0 < \epsilon \le \epsilon_1$, where $\epsilon_1 = \epsilon_1(M,M',L) > 0$ is a small constant. Then, there exists a solution $u$ to $i\partial_t u + \Delta_{\mathbb{R}^2\times \mathbb{T}} u = |u|^2 u$ on $I\times \mathbb{R}^2 \times \mathbb{T}$ with
initial data $u(t_0)$ at time $t=t_0$ satisfying
\begin{align*}
\|u-\tilde{u}\|_{L_t^4 L_x^4 H_y^{1-\epsilon_0}} & \le C(M,M',L)\epsilon,\\
\|u-\tilde{u}\|_{L_t^\infty L_x^2 H_{y}^{ 1-\epsilon_0}}  & \le C(M,M',L)M',\\
\|u\|_{L_t^\infty L_x^2 H_{y}^{ 1-\epsilon_0 }}  & \le C(M,M',L),\\
\|u \|_{L_t^4 L_x^4 H_y^{1-\epsilon_0}} & \le C(M,M',L) .
\end{align*}
\end{theorem}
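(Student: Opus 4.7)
The plan is to run the standard perturbation argument for NLS, working in the space $L_t^4 L_x^4 H_y^{1-\epsilon_0}$, which is exactly the space in which the trilinear estimate (Proposition \ref{pr3.3}) closes against the dual Strichartz norm $L_t^{4/3}L_x^{4/3}H_y^{1-\epsilon_0}$. Setting $w=u-\tilde u$, the difference solves
\begin{equation*}
i\partial_t w+\Delta_{\R^2\times\T}w=|\tilde u+w|^2(\tilde u+w)-|\tilde u|^2\tilde u-e,
\end{equation*}
and Duhamel gives
\begin{equation*}
w(t)=e^{i(t-t_0)\Delta_{\R^2\times\T}}w(t_0)-i\int_{t_0}^t e^{i(t-s)\Delta_{\R^2\times\T}}\bigl(\mathcal N(\tilde u+w)-\mathcal N(\tilde u)+e\bigr)(s)\,\mathrm ds,
\end{equation*}
where $\mathcal N(v)=|v|^2v$ expands as a finite sum of trilinear terms, each of which is estimated by Proposition \ref{pr3.3}.

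The first key step is to subdivide $I=\bigcup_{j=1}^J I_j$ into finitely many consecutive intervals such that $\|\tilde u\|_{L_t^4L_x^4H_y^{1-\epsilon_0}(I_j)}\le\eta$ on each, where $\eta=\eta(M,M',L)>0$ will be chosen small. The number $J=J(L,\eta)$ is finite since $\|\tilde u\|_{L_t^4L_x^4H_y^{1-\epsilon_0}(I)}\le L$. On a single $I_j$ containing the reference time, using the Strichartz estimate \eqref{eq2.1}, \eqref{eq2.1'}, hypothesis \eqref{eq3.33}, \eqref{eq3.34}, and Proposition \ref{pr3.3}, one gets
\begin{equation*}
\|w\|_{L_t^4L_x^4H_y^{1-\epsilon_0}(I_j)}\lesssim\epsilon+\bigl(\eta^2+\|w\|_{L_t^4L_x^4H_y^{1-\epsilon_0}(I_j)}^2\bigr)\|w\|_{L_t^4L_x^4H_y^{1-\epsilon_0}(I_j)}.
\end{equation*}
A continuity/bootstrap argument then yields $\|w\|_{L_t^4L_x^4H_y^{1-\epsilon_0}(I_j)}\lesssim\epsilon$ provided $\eta$ and $\epsilon$ are small enough. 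Applying the same Strichartz estimate to the $L_t^\infty L_x^2 H_y^{1-\epsilon_0}$ norm of $w$ on $I_j$ gives control by $M'+C\epsilon$.

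The second key step is to iterate over $j=1,\dots,J$. At each step the new initial data is $w(t_j)$, which is bounded in $L_x^2 H_y^{1-\epsilon_0}$ by an amount of the form $C^j M'$, and the linear evolution $e^{i(t-t_j)\Delta}w(t_j)$ on $I_{j+1}$ is controlled by a Strichartz estimate applied to the Duhamel representation ending at $t_j$, which produces at most $C^j\epsilon$ in the $L_t^4L_x^4 H_y^{1-\epsilon_0}$ norm. Because $J$ is finite and depends only on $(M,M',L)$, choosing $\epsilon_1$ sufficiently small (relative to $C^J$) keeps the bootstrap hypothesis valid on every piece; summing the estimates yields the desired bounds $\|w\|_{L_t^4L_x^4H_y^{1-\epsilon_0}(I)}\le C(M,M',L)\epsilon$ and $\|w\|_{L_t^\infty L_x^2 H_y^{1-\epsilon_0}(I)}\le C(M,M',L)M'$. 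The bounds on $u$ itself then follow by the triangle inequality together with the assumed control on $\tilde u$.

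The step I expect to be most delicate is the bookkeeping in the iteration: one must verify that, at the start of each subinterval $I_{j+1}$, the three smallness hypotheses \eqref{eq3.32}--\eqref{eq3.34} remain valid (with possibly enlarged constants) after absorbing the errors incurred on $I_1,\dots,I_j$. This forces $\epsilon_1$ to depend on $(M,M',L)$ through $J$, but since Proposition \ref{pr3.3} is truly trilinear and the norms are subcritical with respect to $\epsilon_0$, there is no derivative loss and the scheme closes; the argument is otherwise entirely parallel to the standard stability lemma for mass-critical NLS on $\R^2$ (e.g. \cite{TV2}), with $L_y^2$ everywhere replaced by $H_y^{1-\epsilon_0}$ using that $H_y^{1-\epsilon_0}(\T)$ is an algebra.
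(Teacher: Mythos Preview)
Your proposal is correct and follows essentially the same approach as the paper: the paper first isolates the short-time step as a separate lemma (small $\|\tilde u\|_{L_t^4L_x^4H_y^{1-\epsilon_0}}$ on a subinterval, then bootstrap via Proposition~\ref{pr3.3} and Strichartz), and then subdivides $I$ into $J\sim(1+L/\epsilon_1)^4$ pieces and iterates exactly as you describe, verifying inductively that the analogues of \eqref{eq3.32}--\eqref{eq3.33} hold at each $t_j$ with errors of size $\sum_{k<j}C(k)\epsilon$. The only organizational difference is that you fold the short-time argument into the main proof rather than stating it separately.
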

We need a short-time perturbation to prove this theorem in $ L_x^2 H_y^{1-\epsilon_0}(\mathbb{R}^2 \times \mathbb{T})$.
\begin{lemma}[Short-time perturbation]\label{le3.6}
Let $I$ be a compact interval and let $\tilde{u}$ be an approximate solution to \eqref{eq1.1} in the sense that
$i\partial_t \tilde{u} + \Delta_{\mathbb{R}^2 \times \mathbb{T}} \tilde{u} = |\tilde{u}|^2 \tilde{u} + e$
for some function $e$. Assume that
\begin{equation}\label{eq2.13new}
\|\tilde{u}\|_{  L_t^\infty L_x^2 H_y^{1-\epsilon_0}(I\times \mathbb{R}^2 \times \mathbb{T})} \le M
\end{equation}
for some positive constant $M$. Let $t_0\in I$ and $u(t_0)$ be such that
\begin{equation}\label{eq3.20}
\|u(t_0)- \tilde{u}(t_0)\|_{    L_x^2 H_y^{1-\epsilon_0}} \le M'
\end{equation}
for some $M' >0$. Assume also the smallness conditions
\begin{align}
 \|\tilde{u}\|_{L_t^4 L_x^4 H_y^{1-\epsilon_0}  (I\times \mathbb{R}^2 \times \mathbb{T})}  & \le \epsilon, \label{eq3.21}\\
\|e^{i(t-t_0)\Delta} (u(t_0) - \tilde{u}(t_0))\|_{L_t^4 L_x^4 H_y^{1-\epsilon_0}  }  & \le \epsilon, \label{eq3.22}\\
\|e\|_{L_t^\frac43 L_x^\frac43 H_y^{1-\epsilon_0}  }  &  \le \epsilon, \label{eq3.23}
\end{align}
for some $0 < \epsilon \le \epsilon_1$, where $\epsilon_1 = \epsilon_1(M,M') > 0$ is a small constant. Then, there exists a solution $u$ to \eqref{eq1.1} on $I\times \mathbb{R}^2 \times \mathbb{T}$ with initial data $u(t_0)$ at time $t= t_0$ satisfying
\begin{align}
\|u-\tilde{u}\|_{L_t^4 L_x^4 H_y^{1-\epsilon_0} }  & \lesssim \epsilon, \label{eq3.24}\\
\|u-\tilde{u}\|_{  L_t^\infty L_x^2 H_y^{1-\epsilon_0}}   &  \lesssim M', \label{eq3.25}\\
\|u\|_{ L_t^\infty L_x^2 H_y^{1-\epsilon_0}}  &  \lesssim M + M',\label{eq3.26} \\
\big\||u|^2 u - |\tilde{u}|^2 \tilde{u}\big\|_{L_t^\frac43 L_x^\frac43 H_y^{1-\epsilon_0}  }  &  \lesssim \epsilon. \label{eq3.27}
\end{align}
\end{lemma}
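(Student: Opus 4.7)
The plan is to set $w := u-\tilde u$ and run a standard continuity/contraction argument for $w$ in $L_t^4 L_x^4 H_y^{1-\epsilon_0}$ using the Strichartz estimates \eqref{eq2.1}, \eqref{eq2.1'} and the trilinear nonlinear estimate of Proposition~\ref{pr3.3}. Subtracting the two equations, $w$ satisfies
\begin{equation*}
i\partial_t w + \Delta_{\mathbb{R}^2\times\mathbb{T}} w = \bigl(|\tilde u+w|^2(\tilde u+w)-|\tilde u|^2\tilde u\bigr) - e,\qquad w(t_0) = u(t_0)-\tilde u(t_0).
\end{equation*}
The pointwise identity $|\tilde u+w|^2(\tilde u+w)-|\tilde u|^2\tilde u = O(|\tilde u|^2|w|+|\tilde u||w|^2+|w|^3)$, combined with the algebra property of $H_y^{1-\epsilon_0}(\mathbb{T})$ and H\"older in $(t,x)$ exactly as in the proof of Proposition~\ref{pr3.3}, yields
\begin{align*}
\bigl\||\tilde u+w|^2(\tilde u+w)-|\tilde u|^2\tilde u\bigr\|_{L_t^{4/3}L_x^{4/3}H_y^{1-\epsilon_0}}
&\lesssim \|\tilde u\|_{L_t^4L_x^4 H_y^{1-\epsilon_0}}^2 \|w\|_{L_t^4L_x^4 H_y^{1-\epsilon_0}} \\
&\quad + \|\tilde u\|_{L_t^4L_x^4 H_y^{1-\epsilon_0}} \|w\|_{L_t^4L_x^4 H_y^{1-\epsilon_0}}^2 + \|w\|_{L_t^4L_x^4 H_y^{1-\epsilon_0}}^3.
\end{align*}

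Next I apply the inhomogeneous Strichartz estimate \eqref{eq2.1'} with the admissible pair $(4,4)$ to the Duhamel expression for $w$. Using \eqref{eq3.22} for the homogeneous piece, \eqref{eq3.23} for the error contribution, and the smallness \eqref{eq3.21} of $\tilde u$ together with the trilinear bound just displayed, I obtain the closed inequality
\begin{equation*}
X \;\le\; C\epsilon + C\epsilon^2 X + C\epsilon X^2 + CX^3, \qquad X := \|w\|_{L_t^4 L_x^4 H_y^{1-\epsilon_0}(I\times\mathbb{R}^2\times\mathbb{T})}.
\end{equation*}
A continuity-in-time bootstrap, based on the fact that $X$ depends continuously on the length of the subinterval $[t_0,t]$ and vanishes at $t=t_0$, then forces $X\lesssim\epsilon$ whenever $\epsilon\le\epsilon_1(M,M')$ is chosen small enough, which is \eqref{eq3.24}. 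Reapplying Strichartz with the energy pair $(\infty,2)$, together with \eqref{eq3.20}, \eqref{eq3.23} and the bound just proven, yields \eqref{eq3.25}. Then \eqref{eq3.26} follows from the triangle inequality with \eqref{eq2.13new}, and \eqref{eq3.27} is an immediate consequence of the trilinear bound above applied with $X\lesssim\epsilon$.

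To upgrade these a priori estimates into actual existence of $u$ on all of $I$, I would either formulate the Duhamel map $w\mapsto \Phi(w)$ as a contraction on the closed ball $\{\,w: \|w\|_{L_t^4 L_x^4 H_y^{1-\epsilon_0}\cap L_t^\infty L_x^2 H_y^{1-\epsilon_0}}\le 2C\epsilon\,\}$ for $\epsilon$ small, or invoke the local wellposedness of Theorem~\ref{th2.3} combined with the continuity argument above to extend the solution across $I$. I do not expect any deep obstacle here: the only delicate point is that the whole argument must be carried out at the strictly subcritical regularity $H_y^{1-\epsilon_0}$ rather than at $H_y^1$, and this is possible precisely because $H_y^{1-\epsilon_0}(\mathbb{T})$ is still an algebra, so that the cubic nonlinearity remains controllable in the $y$ variable with no derivative loss.
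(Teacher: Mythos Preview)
Your proposal is correct and follows essentially the same approach as the paper: set $w=u-\tilde u$, control the cubic difference by the trilinear estimate of Proposition~\ref{pr3.3}, feed this into the Strichartz estimates to obtain a closed inequality for $\|w\|_{L_t^4L_x^4H_y^{1-\epsilon_0}}$, and close by a continuity argument; the remaining bounds \eqref{eq3.25}--\eqref{eq3.27} then follow exactly as you indicate. The only cosmetic difference is that the paper runs the bootstrap on the nonlinear quantity $A(t)=\bigl\||\tilde u+w|^2(\tilde u+w)-|\tilde u|^2\tilde u\bigr\|_{L_t^{4/3}L_x^{4/3}H_y^{1-\epsilon_0}([t_0,t])}$ rather than directly on $X=\|w\|_{L_t^4L_x^4H_y^{1-\epsilon_0}}$, but since each controls the other this is immaterial.
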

\begin{proof}
By symmetry, we may assume $t_0 = \inf I$. Let $w =  u - \tilde{u}$, then $w$ satisfies the following
\begin{equation*}
\begin{cases}
i\partial_t w + \Delta_{\mathbb{R}^2 \times \mathbb{T}} w = |\tilde{u} + w|^2 (\tilde{u} + w) - |\tilde{u}|^2 \tilde{u} - e,\\
w(t_0) = u(t_0)- \tilde{u}(t_0).
\end{cases}
\end{equation*}
For $t\in I$, we define
\begin{equation*}
A(t) = \big\||\tilde{u} + w|^2(\tilde{u} + w) - |\tilde{u}|^2 \tilde{u}\big\|_{L_t^\frac43 L_x^\frac43 H_y^{1-\epsilon_0}  ([t_0,t]\times \mathbb{R}^2 \times \mathbb{T})}.
\end{equation*}
By \eqref{eq3.21},
\begin{align}
A(t) &  = \big\||\tilde{u} + w|^2(\tilde{u} + w) - |\tilde{u}|^2 \tilde{u}\big\|_{L_t^\frac43 L_x^\frac43 H_y^{1-\epsilon_0}  ([t_0,t]\times \mathbb{R}^2 \times \mathbb{T})}\notag\\
  &  \lesssim \|w\|_{L_t^4 L_x^4 H_y^{1-\epsilon_0}  }\left( \|\tilde{u}\|_{L_t^4 L_x^4 H_y^{1-\epsilon_0}}^2 + \|w\|_{L_t^4 L_x^4 H_y^{1-\epsilon_0} }^2\right)\notag \\
  &  \lesssim \|w\|_{L_t^4 L_x^4 H_y^{1-\epsilon_0}  ([t_0,t]\times \mathbb{R}^2 \times \mathbb{T})}^3 + \epsilon_1^2 \|w\|_{L_t^4 L_x^4 H_y^{1-\epsilon_0}  ([t_0,t]\times \mathbb{R}^2 \times \mathbb{T})}.  \label{eq3.28}
\end{align}
On the other hand, by Strichartz estimate, \eqref{eq3.22} and \eqref{eq3.23}, we get
\begin{align}
&\|w\|_{L_t^4 L_x^4 H_y^{1-\epsilon_0}  ([t_0,t]\times \mathbb{R}^2 \times \mathbb{T})} \notag \\
 \lesssim  & \|e^{i(t-t_0)\Delta} w(t_0)\|_{L_t^4 L_x^4 H_y^{1-\epsilon_0}  ([t_0,t]\times \mathbb{R}^2 \times \mathbb{T})}
+ A(t) + \|e\|_{L_t^\frac43 L_x^\frac43 H_y^{1-\epsilon_0}  ([t_0,t]\times \mathbb{R}^2 \times \mathbb{T})} \notag\\
\lesssim &   A(t) + \epsilon. \label{eq3.29}
\end{align}
Combining \eqref{eq3.28} and \eqref{eq3.29}, we obtain
\begin{equation*}
A(t) \lesssim (A(t) + \epsilon)^3 + \epsilon_1^2(A(t) + \epsilon).
\end{equation*}
A standard continuity argument then shows that if $\epsilon_1$ is taken sufficiently small,
\begin{align*}
A(t) \lesssim \epsilon, \ \forall\, t\in I,
\end{align*}
which implies \eqref{eq3.27}.

Using \eqref{eq3.27} and \eqref{eq3.29}, one easily derives \eqref{eq3.24}. Moreover, by Strichartz estimate, \eqref{eq3.20}, \eqref{eq3.23} and \eqref{eq3.27},
\begin{align*}
& \|w\|_{  L_t^\infty L_x^2 H_y^{1-\epsilon_0} (I\times \mathbb{R}^2 \times  \mathbb{T})}\\
  \lesssim  & \|w(t_0)\|_{   L_x^2 H_y^{1-\epsilon_0} } + \big\||\tilde{u} + w|^2(\tilde{u} + w) - |\tilde{u}|^2 \tilde{u}\big\|_{L_t^\frac43 L_x^\frac43 H_y^{1-\epsilon_0}  } + \|e\|_{L_t^\frac43 L_x^\frac43 H_y^{1-\epsilon_0}  }\\
\lesssim &  M' + \epsilon,
\end{align*}
which establishes \eqref{eq3.25} for $\epsilon_1= \epsilon_1(M')$ sufficiently small.

To prove \eqref{eq3.26}, we use Strichartz estimate, \eqref{eq2.13new}, \eqref{eq3.20}, \eqref{eq3.27} and \eqref{eq3.21},
\begin{align*}
  \|u\|_{  L_t^\infty L_x^2 H_y^{1-\epsilon_0}(I\times \mathbb{R}^2 \times  \mathbb{T})}
 \lesssim  & \|u(t_0)\|_{   L_x^2 H_y^{1-\epsilon_0}} + \big\||u|^2 u\big\|_{L_t^\frac43 L_x^\frac43 H_y^{1-\epsilon_0}  }\\
 \lesssim &  \|\tilde{u}(t_0) \|_{    L_x^2 H_y^{1-\epsilon_0}} +  \|u(t_0) - \tilde{u}(t_0) \|_{    L_x^2 H_y^{1-\epsilon_0}} \\
 & + \big\||u|^2 u - |\tilde{u}|^2 \tilde{u}\big\|_{L_t^\frac43 L_x^\frac43 H_y^{1-\epsilon_0}  } + \big\||\tilde{u}|^2 \tilde{u}\big\|_{L_t^\frac43 L_x^\frac43 H_y^{1-\epsilon_0}  }\\
 \lesssim &  M + M' + \epsilon+  \|\tilde{u}\|_{L_t^4 L_x^4 H_y^{1-\epsilon_0}  }^3
 \lesssim M + M'+ \epsilon + \epsilon_1^3.
\end{align*}
 Choosing $\epsilon_1 = \epsilon_1(M,M')$ sufficiently small, this finishes the proof of the lemma.

\end{proof}
We now turn to show the stability theory.
\begin{proof}
Subdivide $I$ into $J\sim ( 1 + \frac{L}{\epsilon_0})^4$ subintervals $I_j = [t_j, t_{j+1}]$, $0\le j\le J-1$ such that
\begin{equation*}
\|\tilde{u}\|_{L_t^4 L_x^4 H_y^{1-\epsilon_0}  (I_j\times \mathbb{R}^2\times\mathbb{T})} \le \epsilon_1,
\end{equation*}
where $\epsilon_1 = \epsilon_1(M, 2M')$ is as in Lemma \ref{le3.6}.

We need to replace $M'$ by $2M'$ as $\|u-\tilde{u}\|_{ L_t^\infty L_x^2 H_y^{1-\epsilon_0} }$ might grow slightly in time.

By choosing $\epsilon_1$ sufficiently small depending on $J$, $M$ and $M'$, we can apply Lemma \ref{le3.6} to obtain for each $j$ and all $0 < \epsilon < \epsilon_1$,
\begin{align*}
\|u-\tilde{u}\|_{L_t^4 L_x^4 H_y^{1-\epsilon_0}  (I_j\times \mathbb{R}^2\times\mathbb{T})}   &  \le C(j) \epsilon,\\
\|u-\tilde{u}\|_{ L_t^\infty L_x^2 H_y^{1-\epsilon_0} (I_j\times \mathbb{R}^2\times\mathbb{T})}   &  \le C(j) M',\\
\|u\|_{  L_t^\infty L_x^2 H_y^{1-\epsilon_0} (I_j\times \mathbb{R}^2\times\mathbb{T})}     &   \le C(j)(M + M'),\\
\big\||u|^2 u - |\tilde{u}|^2 \tilde{u}\big\|_{L_t^\frac43 L_x^\frac43 H_y^{1-\epsilon_0}  (I_j\times \mathbb{R}^2\times\mathbb{T})}   &  \le C(j) \epsilon,
\end{align*}
provided we can prove that analogues of \eqref{eq3.32} and \eqref{eq3.33} hold with $t_0$ replaced by $t_j$.

In order to verify this, we use an inductive argument. By Strichartz, \eqref{eq3.32}, \eqref{eq3.34} and the inductive hypothesis,
\begin{align*}
 & \|u(t_j)-\tilde{u}(t_j)\|_{  L_x^2 H_y^{1-\epsilon_0} }\\
\lesssim & \  \|u(t_0)- \tilde{u}(t_0)\|_{  L_x^2 H_y^{1-\epsilon_0}} + \big\||u|^2 u - |\tilde{u}|^2 \tilde{u}\big\|_{L_t^\frac43 L_x^\frac43 H_y^{1-\epsilon_0} ([t_0,t_j]\times \mathbb{R}^2 \times \mathbb{T})} + \|e\|_{L_t^\frac43 L_x^\frac43 H_y^{1-\epsilon_0}   ([t_0,t_j]\times \mathbb{R}^2 \times \mathbb{T})}\\
\lesssim  &\  M' + \sum\limits_{k=0}^{j-1} C(k)\epsilon + \epsilon.
\end{align*}
Similarly, by Strichartz, \eqref{eq3.33}, \eqref{eq3.34} and the inductive hypothesis,
\begin{align*}
& \|e^{i(t-t_j)\Delta} (u(t_j) - \tilde{u}(t_j))\|_{L_t^4 L_x^4 H_y^{1-\epsilon_0}  (I_j\times \mathbb{R}^2\times \mathbb{T})}\\
 \lesssim &  \|e^{i(t-t_0)\Delta} (u(t_0) - \tilde{u}(t_0)) \|_{L_t^4 L_x^4 H_y^{1-\epsilon_0}  (I_j \times \mathbb{R}^2 \times \mathbb{T})}
+ \|e\|_{L_t^\frac43 L_x^\frac43 H_y^{1-\epsilon_0}   ([t_0,t_j]\times \mathbb{R}^2 \times \mathbb{T})}\\
& \quad  + \big\||u|^2 u - |\tilde{u}|^2 \tilde{u}\big\|_{L_t^\frac43 L_x^\frac43 H_y^{1-\epsilon_0}  ([t_0,t_j] \times \mathbb{R}^2 \times \mathbb{T})}\\
 \lesssim & \epsilon + \sum\limits_{k=0 }^{j-1}  C(k) \epsilon.
\end{align*}
Choosing $\epsilon_1$ sufficiently small depending on $J, M$ and $M'$ to guarantee the hypotheses of Lemma \ref{le3.6} continue to hold as $j$ varies.
\end{proof}

\begin{remark}[Persistence of regularity]
The results in the above theorems can be extended to $H^1(\mathbb{R}^2\times \mathbb{T})$.
\end{remark}
The following theorem implies that it sufficies to show the finiteness of the solution in $L_{t,x}^4 H_y^{1-\epsilon_0}$ for the scattering of \eqref{eq1.1} in $H^1$.
\begin{theorem}[Scattering norm]\label{th2.946}
Suppose that $u\in C_t^0 H_{x,y}^{ 1}(\mathbb{R} \times \mathbb{R}^2_x\times \mathbb{T}_y)$ is a global solution of \eqref{eq1.1} satisfying
$\|u\|_{L_t^4 L_x^4 H_y^{1-\epsilon_0}  (\mathbb{R} \times \mathbb{R}^2_x\times \mathbb{T}_y)} \le L$ and $\|u(0)\|_{H_{x,y}^1} \le M$ for some positive constants $M,\, L$,
then $u$ scatters in $H^1_{x,y}(\mathbb{R}^2\times \mathbb{T})$.
That is,
there exist $u_\pm \in H_{x,y}^{ 1}(\mathbb{R}^2 \times \mathbb{T})$ such that
\begin{equation} \label{eq1.3}
\|u(t,x,y)- e^{it\Delta_{\mathbb{R}^2 \times \mathbb{T}}} u_\pm(x,y)\|_{H_{x,y}^{ 1}} \to 0, \ \text{ as } t\to \pm \infty.
\end{equation}
\end{theorem}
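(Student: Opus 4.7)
The plan is to argue in two steps: first, use the scattering-norm bound $\|u\|_{L_t^4 L_x^4 H_y^{1-\epsilon_0}}\le L$ to upgrade $u$ to a global $H^1$-level Strichartz bound by partitioning the time line into small pieces; second, apply the Duhamel formula together with dual Strichartz to obtain scattering in $H^1$ via a Cauchy criterion.

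For the first step, fix $\eta>0$ small (to be chosen), and partition $\mathbb{R}$ into $J\sim (L/\eta)^4$ successive intervals $I_j=[t_j,t_{j+1}]$ on each of which $\|u\|_{L_t^4 L_x^4 H_y^{1-\epsilon_0}(I_j)}\le\eta$. Applying the Strichartz estimates \eqref{eq2.1}--\eqref{eq2.1'} to the Duhamel formula on $I_j$ gives
\begin{equation*}
\|u\|_{L_t^\infty H^1_{x,y}\cap L_t^4 L_x^4 H_y^1\cap L_t^4 W_x^{1,4}L_y^2(I_j)}
\lesssim \|u(t_j)\|_{H^1_{x,y}}+\bigl\|\langle\nabla_{x,y}\rangle(|u|^2u)\bigr\|_{L_t^{4/3}L_x^{4/3}L_y^2(I_j)}.
\end{equation*}
For the $\nabla_y$-derivative we invoke the algebra property of $H_y^{1-\epsilon_0}(\mathbb{T})$ used in Proposition \ref{pr3.3}, and for the $\nabla_x$-derivative we apply the Leibniz rule together with the Sobolev embedding $H_y^{1-\epsilon_0}\hookrightarrow L_y^\infty$; combined with H\"older in $(t,x)$, this yields
\begin{equation*}
\bigl\|\langle\nabla_{x,y}\rangle(|u|^2u)\bigr\|_{L_t^{4/3}L_x^{4/3}L_y^2(I_j)}
\lesssim \|u\|_{L_t^4 L_x^4 H_y^{1-\epsilon_0}(I_j)}^{2}\,\|u\|_{L_t^4 L_x^4 H_y^1\cap L_t^4 W_x^{1,4}L_y^2(I_j)}.
\end{equation*}
Taking $\eta$ sufficiently small to absorb the factor $\eta^2$ on the left, and using the uniform-in-time bound $\|u(t_j)\|_{H^1_{x,y}}\le C(M)$ from Theorem \ref{th2.433}, we obtain $\|u\|_{L_t^4 L_x^4 H_y^1\cap L_t^4 W_x^{1,4}L_y^2(I_j)}\le C(M)$ for every $j$. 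Summing over the $J$ intervals produces the global bound $\|u\|_{L_t^4 L_x^4 H_y^1\cap L_t^4 W_x^{1,4}L_y^2(\mathbb{R}\times\mathbb{R}^2\times\mathbb{T})}\le C(M,L)$.

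For the second step, for any $t_1<t_2$ the Duhamel formula combined with dual Strichartz gives
\begin{equation*}
\bigl\|e^{-it_2\Delta_{\mathbb{R}^2\times\mathbb{T}}}u(t_2)-e^{-it_1\Delta_{\mathbb{R}^2\times\mathbb{T}}}u(t_1)\bigr\|_{H^1_{x,y}}
\lesssim \|u\|_{L_t^4 L_x^4 H_y^{1-\epsilon_0}([t_1,t_2])}^{2}\,\|u\|_{L_t^4 L_x^4 H_y^1\cap L_t^4 W_x^{1,4}L_y^2([t_1,t_2])}.
\end{equation*}
By the global bound from step one the second factor is dominated by $C(M,L)$, while by the hypothesis and dominated convergence the first factor tends to $0$ as $t_1,t_2\to+\infty$. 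Hence $\{e^{-it\Delta_{\mathbb{R}^2\times\mathbb{T}}}u(t)\}_{t\ge 0}$ is Cauchy in $H^1_{x,y}$, and its limit $u_+\in H^1$ satisfies \eqref{eq1.3}; the case $t\to-\infty$ is identical, producing $u_-$.

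The main technical point is the $H^1$-level nonlinear estimate in step one: it is an $H^1$-refinement of Proposition \ref{pr3.3} in which the $\nabla_y$-derivative is handled by the algebra property while the $\nabla_x$-derivative requires the Sobolev embedding $H_y^{1-\epsilon_0}\hookrightarrow L_y^\infty$ so that $x$- and $y$-derivatives are treated separately. This is routine, so no significant obstacle arises.
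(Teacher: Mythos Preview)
Your proposal is correct and follows essentially the same route as the paper's proof: partition $\mathbb{R}$ into finitely many intervals on which the $L_{t,x}^4 H_y^{1-\epsilon_0}$ norm is small, apply Strichartz together with the $H^1$-level nonlinear estimate (handling $\nabla_y$ via the algebra property and $\nabla_x$ via Leibniz plus $H_y^{1-\epsilon_0}\hookrightarrow L_y^\infty$) to absorb and obtain the global $L_{t,x}^4 H_y^1\cap L_t^4 W_x^{1,4}L_y^2$ bound, and then deduce scattering. The only cosmetic difference is that the paper cites ``classical scattering theory'' for the final step, whereas you spell out the Cauchy criterion explicitly.
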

\begin{proof}
By the classical scattering theory as in \cite{C,T2}, we only need to show
\begin{align}\label{eq2.2046}
\|u\|_{L_{t,x}^4 H_y^{1 } \cap L_t^4 W_x^{1,4} L_y^2(\mathbb{R} \times \mathbb{R}^2 \times \mathbb{T})} \le C(M,L).
\end{align}
By Theorem \ref{th2.3}, it suffices to prove \eqref{eq2.2046} as an a priori bound.

Divide the time interval $\mathbb{R}$ into $N \sim (1+ \frac{L}\delta)^{10}$ subintervals $I_j = [t_j,t_{j+1}]$ such that
\begin{align}\label{eq2.2146}
\|u\|_{L_{t,x}^4 H_y^{1-\epsilon_0}(I_j \times \mathbb{R}^2 \times \mathbb{T})} \le \delta,
\end{align}
where $\delta >0 $ will be chosen later.

On each $I_j$, by Strichartz estimate, Sobolev embedding and \eqref{eq2.2146}, we have
\begin{align*}
  \|u\|_{L_t^4 W_x^{1,4} L_y^2 \cap L_{t,x}^4 H_y^1(I_j \times \mathbb{R}^2 \times \mathbb{T})}
\le  & C\Big(\|u(t_j)\|_{H^1} + \left\||u|^2 u \right\|_{L_{t,x}^\frac43 H_y^1 } + \left\||u|^2 u \right\|_{L^\frac43_t W_x^{1,\frac43}L_y^2} \Big)\\
\le  & C\Big(\|u(t_j)\|_{H^1} + \|u\|_{L_{t,x}^4 H_y^{1-\epsilon_0}}^2 \|u\|_{L_{t,x}^4 H_y^1 \cap L_t^4 W_x^{1,4}L_y^2}\Big)\\
\le &  C\left(\|u(t_j)\|_{H^1} + \delta^2 \|u\|_{L_{t,x}^4 H_y^1 \cap L_t^4 W_x^{1,4}L_y^2}\right),
\end{align*}
choosing $\delta \le (\frac1{2C})^\frac12$, we have
\begin{align*}
\|u\|_{L_t^4 W_x^{1,4} L_y^2 \cap L_{t,x}^4 H_y^1(I_j \times \mathbb{R}^2 \times \mathbb{T})} \le 2C\|u(t_j)\|_{H_{x,y}^1}.
\end{align*}
The bound now follows by adding up the bounds on each subintervals $I_j$, which gives \eqref{eq2.2046}.
\end{proof}

\section{Profile decomposition}\label{se3}
In this section, we will establish the linear profile decomposition in $H^1(\mathbb{R}_x^2 \times \mathbb{T}_y)$
in Subsection \ref{sse3.1}, which will heavily rely on the linear profile decomposition in $L^2(\mathbb{R}^2)$.
The linear profile decomposition in $L^2(\mathbb{R}^2)$ for the mass-critical nonlinear Schr\"odinger equation
is established by F. Merle and L. Vega \cite{MV}. After the work of R. Carles and S. Keraani \cite{CK} on the one-dimensional case,
P. B\'egout and A. Vargas \cite{BV} establish the linear profile decomposition of the mass-critical nonlinear Schr\"odinger equation
for arbitrary dimensions by the refined Strichartz inequality \cite{Bo1} and bilinear restriction estimate \cite{T1}.
We also refer to \cite{Killip-Visan1} for a version of the proof of the linear profile decomposition.
We then analyze the nonlinear profiles in Subsection \ref{sse3.2}, which is important to show the existence of the almost-periodic solution in Section \ref{se4}.
\subsection{Linear profile decomposition}\label{sse3.1}
In this subsection, we will establish the linear profile decomposition. The linear profile decomposition is mainly inspired by the mass-critical profile decomposition in $\mathbb{R}^2$,
which we refer to \cite{BV,Bo1,CK,Killip-Visan1,MV}.
\begin{proposition}[Linear profile decomposition in $H^1( \mathbb{R}^2 \times \mathbb{T})$] \label{pro3.9v23}
Let $\{u_n\}_{n\ge 1}
$
be a bounded sequence in $H_{x,y}^{ 1}(\mathbb{R}^2 \times \mathbb{T})$. Then after passing to a subsequence if necessary,
there exist $J^* \in \{0,1, \cdots\} \cup \{\infty\}$,
functions $\phi^{j}$ in $L_x^2 H_{y}^{ 1}(\mathbb{R}^2 \times \mathbb{T})$ and mutually orthogonal frames $(\lambda_n^j, t_n^j, x_n^j,\xi_n^j )_{n\ge 1} \subseteq (0,\infty) \times \mathbb{R} \times \mathbb{R}^2 \times \mathbb{R}^2$,
which means
\begin{align}\label{eq3.141}
\frac{\lambda_n^j}{\lambda_n^k} + \frac{\lambda_n^k}{\lambda_n^j} + \lambda_n^j \lambda_n^{k} |\xi_n^j - \xi_n^{k}|^2  + \frac{|x_n^j- x_n^k |^2 }{\lambda_n^j \lambda^k_n}
+ \frac{|(\lambda_n^j)^2 t_n^j - (\lambda_n^k)^2 t_n^k |}{\lambda_n^j \lambda_n^k} \to \infty, \text{ as } n \to \infty, \text{ for } j\ne k,
\end{align}
with $\lambda_n^j\to 1$ or $\infty$, as $n\to \infty$, $|\xi_n^j| \le C_j$, for every $1\le j \le J$,
and for every $J \le J^*$ a sequence $r_n^J \in L_x^2 H_{y}^{ 1}(\mathbb{R}^2 \times \mathbb{T})$, such that
\begin{align*}
u_n(x,y)  = \sum\limits_{j=1}^J \frac1{\lambda_n^j} e^{ix\xi_n^j} \left(e^{it_n^j\Delta_{\mathbb{R}^2   }} \phi^j\right)\left(\frac{x-x_n^j}{\lambda_n^j},y\right)  + r_n^J(x,y).
\end{align*}
Moreover,
\begin{align}
\lim\limits_{n\to \infty} \left(\|u_n\|_{L_x^2 H_y^{ 1}}^2 - \sum\limits_{j=1}^J \left\|   \phi^j   \right\|_{L_x^2 H_y^{ 1}}^2 - \|r_n^J\|_{L_x^2 H_y^{ 1}}^2 \right) & = 0,\label{eq5.1} \\
\lambda_n^j   e^{-it_n^j \Delta_{\mathbb{R}^2  }}\left(e^{-i(\lambda_n^j x + x_n^j) \xi_n^j}  r_n^J\left(\lambda_n^j x+ x_n^j,y\right)\right) & \rightharpoonup 0 \text{ in } L_x^2 H^{ 1}_{y},  \text{ as  } n\to \infty,\text{ for each }  j\le J, \label{eq5.2} \\
 \limsup\limits_{n\to \infty} \|e^{it \Delta_{\mathbb{R}^2\times \mathbb{T}   }} r_n^{J} \|_{L_t^4 L_x^4 H_y^{1-\epsilon_0} (\mathbb{R}\times \mathbb{R}^2 \times \mathbb{T})} &  \to 0, \text{ as } J\to J^*. \label{eq5.3}
\end{align}
\end{proposition}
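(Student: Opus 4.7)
The plan is to transplant the mass-critical $L^2(\mathbb{R}^2)$ linear profile decomposition of Merle--Vega, B\'egout--Vargas, and Carles--Keraani into the vector-valued setting by identifying $u(x,y)=(2\pi)^{-1/2}\sum_{k\in\mathbb{Z}}u_k(x)e^{iky}$ with the sequence $\vec{u}=\{u_k\}_{k\in\mathbb{Z}}$. Under this identification the Schr\"odinger group is diagonal in $k$, namely $(e^{it\Delta_{\mathbb{R}^2\times \mathbb{T}}}u)_k = e^{-ik^2 t}e^{it\Delta_{\mathbb{R}^2}}u_k$, and the $H^1_{x,y}$-bound on $u_n$ translates to a bound of $\vec u_n$ in $L_x^2 h^1\subset L_x^2 h^{1-\epsilon_0}$. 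The scattering norm we are trying to kill is $L_{t,x}^4 H_y^{1-\epsilon_0}$, and the $\epsilon_0$-gain in the $y$-direction is precisely what lets us reduce to a scalar $\mathbb{R}^2$-problem at each fiber.

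First, I would establish a refined Strichartz inequality of the form
\begin{equation*}
\|e^{it\Delta_{\mathbb{R}^2\times \mathbb{T}}}u\|_{L_{t,x}^4 H_y^{1-\epsilon_0}} \lesssim \|u\|_{L_x^2 H_y^{1-\epsilon_0}}^{1-\theta}\,\bigl(\sup_{Q}\|P_Q^x e^{it\Delta_{\mathbb{R}^2\times \mathbb{T}}}u\|_{L_{t,x}^\infty L_y^2}\bigr)^{\theta}
\end{equation*}
for some $\theta\in(0,1)$, with $Q$ ranging over dyadic $\xi$-cubes. Because the phases $e^{-ik^2 t}$ leave $L_{t,x}^4$ invariant and the flow acts diagonally in $k$, this reduces to applying the classical B\'egout--Vargas refined Strichartz (or Tao's bilinear estimate \cite{T1} combined with a Whitney decomposition) on each fiber $u_k$, then summing against the $\langle k\rangle^{1-\epsilon_0}$ weights via Minkowski in $\ell_k^2\hookrightarrow L_{t,x}^4$. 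If $\limsup_n\|e^{it\Delta}u_n\|_{L_{t,x}^4 H_y^{1-\epsilon_0}}=\varepsilon>0$, then along a subsequence the weak norm on the right stays $\gtrsim\varepsilon^{1/\theta}$, which by the now-standard frame-extraction procedure \cite{Killip-Visan1,MV} produces a frame $(\lambda_n,t_n,x_n,\xi_n)$ and a profile $\phi$ with
\begin{equation*}
\lambda_n e^{-it_n\Delta_{\mathbb{R}^2}}\bigl(e^{-i(\lambda_n x+x_n)\xi_n}u_n(\lambda_n x+x_n,y)\bigr)\rightharpoonup\phi \quad\text{weakly in } L_x^2 H_y^1.
\end{equation*}
The $H^1$-budget constrains the frame: if $\lambda_n\to 0$ then the $\dot H_x^1$-norm of the rescaled sequence would vanish, forcing $\phi$ to be $x$-independent and hence $\phi\equiv 0$; and $|\xi_n|\to\infty$ would trivialize the weak limit by rapid oscillation against smooth test functions. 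Excluding these, along a further subsequence $\lambda_n\to 1$ or $\infty$ and $|\xi_n|\le C$, while $\phi\in L_x^2 H_y^1$ by Banach--Alaoglu since the rescaled-modulated-translated-time-shifted sequence is bounded in this space (the $y$-scaling is untouched and the mass-critical $x$-rescaling preserves $L_x^2$).

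The third step is iteration. Replace $u_n$ by $u_n$ minus the first profile (mapped back through the frame symmetries), and apply the extraction to the new remainder, producing $\phi^j$ and frames $(\lambda_n^j,t_n^j,x_n^j,\xi_n^j)$. The orthogonality \eqref{eq3.141} between distinct frames is proved by contradiction: were all four terms in \eqref{eq3.141} bounded for some $j\ne k$, the two frames would be asymptotically equivalent, and the weak-limit relation \eqref{eq5.2} at step $k$ would then force $\phi^k$ to vanish, contradicting the construction. The asymptotic Pythagorean decoupling \eqref{eq5.1} follows by expanding $\|u_n\|_{L_x^2 H_y^1}^2$, using unitarity of $e^{it\Delta_{\mathbb{R}^2\times \mathbb{T}}}$ in $L_x^2 H_y^s$, and noting that orthogonality makes cross terms vanish asymptotically. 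For \eqref{eq5.3}, the quantitative lower bound $\|\phi^J\|_{L_x^2 H_y^{1-\epsilon_0}}\gtrsim\varepsilon_J^{1/\theta}/M^{(1-\theta)/\theta}$, where $\varepsilon_J=\limsup_n\|e^{it\Delta}r_n^J\|_{L_{t,x}^4 H_y^{1-\epsilon_0}}$ and $M=\sup_n\|u_n\|_{H^1}$, combined with the decoupling $\sum_j \|\phi^j\|_{L_x^2 H_y^{1-\epsilon_0}}^2\lesssim M^2$, forces $\varepsilon_J\to 0$ as $J\to J^*$.

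The main technical obstacle will be setting up the refined Strichartz cleanly in the $\ell_k^2$-valued setting and verifying that the frame parameters extracted in the inverse step automatically avoid the non-physical regimes $\lambda_n\to 0$ and $|\xi_n|\to\infty$, while the profile upgrades automatically from a weak limit in $L_x^2 H_y^{1-\epsilon_0}$ to one in $L_x^2 H_y^1$. The combinatorial task of checking the four-direction orthogonality \eqref{eq3.141} between profiles drawn from different regimes (Euclidean $\lambda_n\to 1$ vs.\ large-scale $\lambda_n\to\infty$, vanishing vs.\ nonzero limiting $\xi_n$) is delicate but follows the multi-parameter-symmetry template already used in \cite{HP,IP1}; the genuinely new aspect is the $h^{1-\epsilon_0}$-valued target in the $y$-direction, which forces every estimate to be performed componentwise in $k$ and then reassembled with the $\langle k\rangle^{1-\epsilon_0}$ weight intact.
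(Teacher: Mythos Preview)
Your overall strategy---reducing to a vector-valued mass-critical profile decomposition and leveraging the $\epsilon_0$-slack in the $y$-regularity---is right, but the refined Strichartz inequality you propose is false as written and the componentwise argument you sketch does not prove it. Take $u=u_0(x)e^{iKy}$ for a single large $K$: the left side scales like $\langle K\rangle^{1-\epsilon_0}\|e^{it\Delta_{\mathbb{R}^2}}u_0\|_{L_{t,x}^4}$ while your cap norm $\|P_Q^x e^{it\Delta}u\|_{L_{t,x}^\infty L_y^2}$ carries no $\langle K\rangle$-weight at all, so the inequality fails for $K\to\infty$. Even if you restore the weight and write the cap in $H_y^{1-\epsilon_0}$, the fiberwise-then-Minkowski route only yields an upper bound by $\bigl(\sum_k \langle k\rangle^{2(1-\epsilon_0)}\sup_Q\mathrm{cap}_Q(u_k)^2\bigr)^{\theta/2}$, with the supremum trapped \emph{inside} the $k$-sum; you cannot pull it outside, so you never obtain a single dyadic cube $Q_n$ governing all Fourier modes simultaneously, and the frame-extraction step has no place to start.

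The paper avoids this by not aiming directly at $L_{t,x}^4 H_y^{1-\epsilon_0}$. It proves instead a refined Strichartz with the genuinely scalar norm $L_{t,x,y}^4(\mathbb{R}\times\mathbb{R}^2\times\mathbb{T})$ on the left (Proposition~\ref{pr4.2436}): here $y$ enters on equal footing with $t,x$, so the Whitney decomposition and Tao's bilinear estimate on $\mathbb{R}^2$ transplant with $y$ as a passive integration variable, and the $H_y^{1-\frac{\epsilon_0}2}$ regularity appears only through Sobolev $H_y^{1-\frac{\epsilon_0}2}\hookrightarrow L_y^{11/3}$ when closing the sum over cubes. The inverse step (Proposition~\ref{pr4.2536}) then extracts a frame that includes a $y$-translation $y_n$---the profile is detected by pairing against $h(x)\delta_0(y)$---yielding a preliminary decomposition (Proposition~\ref{pr4.1636}) with remainder small in $L_{t,x,y}^4$. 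Two short post-processing moves finish: compactness of $\mathbb{T}$ lets one pass to $y_n^j\to y_*^j$, absorb it into $\phi^j$, and set $y_n^j\equiv 0$; and the desired $L_{t,x}^4 H_y^{1-\epsilon_0}$ smallness of the remainder comes from interpolating the $L_{t,x,y}^4$ smallness against the uniform $L_x^2 H_y^{1-\frac{\epsilon_0}2}$ bound (Remark~\ref{re3.441}). Your treatment of the $H^1$ constraints on $\lambda_n,\xi_n$ and of the orthogonality and Pythagorean decoupling is fine once the correct refined Strichartz is in place.
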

\begin{proof}
By Proposition \ref{pr4.1636} and Remark \ref{re3.441}, we have the decomposition
\begin{align*}
u_n(x,y) =  \sum_{j=1}^J \frac1{\lambda_n^j} e^{ix\xi_n^j} (e^{it_n^j \Delta_{\mathbb{R}^2} } \phi^j)\left(\frac{x-x_n^j}{\lambda_n^j}, y-y_n^j\right) + w_n^J(x,y),
\end{align*}
with $\lambda_n^j \ge 1$, and $|\xi_n^j| \le C_j$, for every $1\le j \le J$, and for $j\ne j'$,
\begin{align*}
\frac{\lambda_n^j}{\lambda_n^{j'}} + \frac{\lambda_n^{j'}}{\lambda_n^j} + \lambda_n^j \lambda_n^{j'} |\xi_n^j - \xi_n^{j'}|^2 + \frac{|x_n^j-x_n^{j'}|^2 } {\lambda_n^j \lambda_n^{j'}} + |y_n^j-y_n^{j'}| + \frac{|(\lambda_n^j)^2 t_n^j -(\lambda_n^{j'})^2 t_n^{j'}|}{\lambda_n^j \lambda_n^{j'}} \to \infty, \text{ as } n \to \infty.
\end{align*}
Since $\mathbb{T}$ is compact, we may assume $y_n^j \to y_*^j$, as $n\to \infty$. Then we may replace $\phi^j(\cdot, y) $ by $\phi^j(\cdot, y-y_*^j)$,
and set $y_n^j \equiv 0$.
If $\lambda_n^j$ does not converge to $\infty$, suppose $\lambda_n^j \to \lambda_\infty^j \in [1,\infty)$. Thus, we may replace $\phi^j(x,\cdot)$ by
$\frac1{\lambda_\infty^j} \phi^j\left(\frac{x}{\lambda_\infty^j}, \cdot\right)$ and set $\lambda_n^j \equiv 1$, whilst retaining the conclusion of
Proposition \ref{pro3.9v23}.
\end{proof}
Before presenting Proposition \ref {pr4.1636}, that is the linear profile decomposition in $L_x^2 H_y^{1-\frac{\epsilon_0}2} (\mathbb{R}^2 \times \mathbb{T})$.
We will first establish the refined Strichartz estimate in Proposition \ref{pr4.2436}. We will collect some basic facts appeared in \cite{Killip-Visan1}.
\begin{definition}\label{de4.2136}
Given $j\in \mathbb{Z}$, we write $\mathcal{D}_j$ for the set of all dyadic cubes of side-length $2^j$ in $\mathbb{R}^2$,
\begin{align*}
\mathcal{D}_j = \left\{ [2^j k_1, 2^j(k_1+1)) \times [2^j k_2, 2^j(k_2+1)): (k_1,k_2) \in \mathbb{Z}^2 \right\}.
\end{align*}
We also write $\mathcal{D} = \bigcup\limits_{j} \mathcal{D}_j$.
Given $Q\in \mathcal{D}$, we define $f_Q(x,y)$ by $\mathcal{F}_x  ({f}_Q) = \chi_Q \,\mathcal{F}_x {f} $.
\end{definition}
By the bilinear Strichartz estimate on $\mathbb{R}^2$ \cite{T1} and interpolation, we have
\begin{corollary} \label{co4.2236}
Suppose $Q,Q'\in \mathcal{D}$ with $dist(Q,Q') \gtrsim diam(Q) = diam(Q')$, then for some $1<  p < 2$,
\begin{align*}
\|e^{it\Delta_{\mathbb{R}^2 }} f_Q \cdot e^{it\Delta_{\mathbb{R}^2}} f_{Q'}\|_{L_{t,x}^\frac{11}6}
\lesssim |Q|^{\frac{10}{11}-\frac2p} \|\mathcal{F}_x {f}\|_{L_\xi^p(Q)} \|\mathcal{F}_x {f}\|_{L_\xi^p(Q')}.
\end{align*}
\end{corollary}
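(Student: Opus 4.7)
The plan is to obtain the claim by interpolating Tao's bilinear restriction estimate for the paraboloid in $\mathbb{R}^3_{t,x}$ with a trivial bilinear $L^2_{t,x}$ bound, and then converting the resulting $L^2$ Fourier norms into $L^p$ Fourier norms via H\"older's inequality.

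First I would invoke the bilinear restriction theorem of \cite{T1} for the paraboloid in $\mathbb{R}^3_{t,x}$: for every $p_0>5/3$ and any pair of unit-scale Fourier cubes in $\mathbb{R}^2$ separated by distance $\sim 1$, one has
\begin{align*}
\|e^{it\Delta_{\mathbb{R}^2}}u_0\cdot e^{it\Delta_{\mathbb{R}^2}}v_0\|_{L^{p_0}_{t,x}}\lesssim\|u_0\|_{L^2}\|v_0\|_{L^2}.
\end{align*}
Parabolic scaling $(t,x)\mapsto (N^2 t,Nx)$ with $N=|Q|^{1/2}$ upgrades this to
\begin{align*}
\|e^{it\Delta_{\mathbb{R}^2}}f_Q\cdot e^{it\Delta_{\mathbb{R}^2}}f_{Q'}\|_{L^{p_0}_{t,x}}\lesssim|Q|^{1-2/p_0}\|f_Q\|_{L^2}\|f_{Q'}\|_{L^2},
\end{align*}
for any $Q,Q'\in\mathcal{D}$ with $\mathrm{diam}(Q)=\mathrm{diam}(Q')\sim N$ and $\mathrm{dist}(Q,Q')\gtrsim N$.

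Second, I would combine H\"older's inequality with the linear $L^4_{t,x}$ Strichartz estimate on $\mathbb{R}^2$ to obtain the trivial bilinear $L^2_{t,x}$ baseline
\begin{align*}
\|e^{it\Delta_{\mathbb{R}^2}}f_Q\cdot e^{it\Delta_{\mathbb{R}^2}}f_{Q'}\|_{L^2_{t,x}}\lesssim\|f_Q\|_{L^2}\|f_{Q'}\|_{L^2},
\end{align*}
which carries no power of $|Q|$. Bilinear interpolation between the two displayed estimates at exponent $11/6$, with interpolation parameter $\theta=p_0/\bigl(11(2-p_0)\bigr)$ determined by $6/11=\theta/p_0+(1-\theta)/2$, gives the resulting exponent of $|Q|$ as $\theta(1-2/p_0)$, which a short algebraic calculation shows equals $-1/11$ \emph{independently} of the choice of $p_0\in(5/3,2)$; the only effect of sending $p_0\to5/3^+$ is to blow up the implicit constant, which is harmless for any fixed admissible $p_0$. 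Therefore
\begin{align*}
\|e^{it\Delta_{\mathbb{R}^2}}f_Q\cdot e^{it\Delta_{\mathbb{R}^2}}f_{Q'}\|_{L^{11/6}_{t,x}}\lesssim|Q|^{-1/11}\|f_Q\|_{L^2}\|f_{Q'}\|_{L^2}.
\end{align*}

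To close, since $Q$ is bounded, Plancherel followed by H\"older on the Fourier side yields $\|f_Q\|_{L^2}=\|\mathcal{F}_xf\|_{L^2(Q)}\le|Q|^{1/2-1/p}\|\mathcal{F}_xf\|_{L^p(Q)}$ for every $1\le p\le 2$, so inserting two copies of this bound produces exactly the advertised prefactor $|Q|^{-1/11+2(1/2-1/p)}=|Q|^{10/11-2/p}$ in front of the $L^p$ Fourier norms. Any $1<p<2$ is admissible, which covers the claim. The only genuinely substantive ingredient is Tao's bilinear restriction estimate, used here as a black box; the remaining steps --- parabolic rescaling, the $L^2_{t,x}$ baseline from linear Strichartz, bilinear interpolation, and H\"older on the Fourier side --- are routine bookkeeping, and the only mildly tricky point is checking that the interpolation exponent on $|Q|$ is miraculously $p_0$-independent, which is what allows one to avoid any $\varepsilon$-loss in the stated power of $|Q|$.
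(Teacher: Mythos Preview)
Your interpolation between Tao's bilinear estimate at exponent $p_0$ and the trivial $L^2_{t,x}$ bound from Strichartz is correct and indeed gives
\[
\|e^{it\Delta_{\mathbb{R}^2}}f_Q\cdot e^{it\Delta_{\mathbb{R}^2}}f_{Q'}\|_{L^{11/6}_{t,x}}\lesssim |Q|^{-1/11}\,\|\mathcal{F}_x f\|_{L^2(Q)}\,\|\mathcal{F}_x f\|_{L^2(Q')}.
\]
This is exactly the statement with $p=2$, since $-1/11=10/11-2/2$. The gap is the last step: the H\"older inequality you invoke,
\[
\|\mathcal{F}_x f\|_{L^2(Q)}\le |Q|^{1/2-1/p}\|\mathcal{F}_x f\|_{L^p(Q)}\quad\text{for }1\le p\le 2,
\]
is false. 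On a bounded set H\"older goes the other way: for $p<2$ one has $\|g\|_{L^p(Q)}\le |Q|^{1/p-1/2}\|g\|_{L^2(Q)}$, equivalently $\|g\|_{L^2(Q)}\ge |Q|^{1/2-1/p}\|g\|_{L^p(Q)}$. So you cannot pass from $L^2$ Fourier norms down to $L^p$ Fourier norms by H\"older; your argument only yields the $p=2$ endpoint, which is not what is needed (and the strict inequality $p<2$ is used crucially in the refined Strichartz estimate that follows).

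The fix is to change the second endpoint in the interpolation. Replace the $L^2_{t,x}$ baseline by the trivial $L^\infty_{t,x}$ bound
\[
\|e^{it\Delta_{\mathbb{R}^2}}f_Q\cdot e^{it\Delta_{\mathbb{R}^2}}f_{Q'}\|_{L^\infty_{t,x}}\le \|\mathcal{F}_x f\|_{L^1(Q)}\,\|\mathcal{F}_x f\|_{L^1(Q')},
\]
which carries $L^1$ Fourier norms rather than $L^2$. Bilinear interpolation between Tao's estimate (output $L^{p_0}$, input $L^2\times L^2$, constant $|Q|^{1-2/p_0}$) and this one (output $L^\infty$, input $L^1\times L^1$, constant $1$) with $\theta=6p_0/11$ lands at $L^{11/6}$ with input exponent $1/p=1-3p_0/11$ and constant $|Q|^{6(p_0-2)/11}$. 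One checks $6(p_0-2)/11=10/11-2/p$, and as $p_0$ ranges over $(5/3,11/6)$ the exponent $p$ sweeps out an interval strictly inside $(1,2)$. This is the route the paper has in mind when it says ``by the bilinear Strichartz estimate on $\mathbb{R}^2$ and interpolation.''
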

By Plancherel's theorem, we have
\begin{lemma} \label{leA.9}
Let $\{R_k\}$ be a family of parallelepipeds in $\mathbb{R}^2$ obeying
$\sup\limits_\xi \sum\limits_{k} \chi_{\alpha R_k}(\xi) \lesssim 1$ for some $\alpha >1$.
Then
\begin{align*}
\left\|\sum_k P_{R_k} f_k\right\|_{L^2}^2 \lesssim \sum_k \|f_k\|_{L^2}^2,
\text{ for any } \{f_k\}\subseteq L^2(\mathbb{R}^2).
\end{align*}
\end{lemma}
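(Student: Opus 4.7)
The plan is to reduce everything to the Fourier side via Plancherel and then exploit the bounded overlap hypothesis pointwise through Cauchy--Schwarz. This is a standard almost-orthogonality argument; once the setup is right, the proof is essentially three lines.

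First I would observe that $P_{R_k}$ is a Fourier multiplier whose symbol $m_k$ satisfies $|m_k| \lesssim 1$ and is supported in (a slight enlargement contained in) $\alpha R_k$, so by Plancherel
\[
\left\|\sum_k P_{R_k} f_k\right\|_{L^2}^2 = \int_{\mathbb{R}^2} \Bigl|\sum_k m_k(\xi)\, \widehat{f_k}(\xi)\Bigr|^2 \, d\xi.
\]
Next, for each fixed $\xi$, I would only sum over the indices $k$ with $\xi \in \alpha R_k$, then apply Cauchy--Schwarz together with the overlap hypothesis:
\[
\Bigl|\sum_k m_k(\xi)\, \widehat{f_k}(\xi)\Bigr|^2 \le \Bigl(\sum_k \chi_{\alpha R_k}(\xi)\Bigr) \cdot \sum_k |m_k(\xi)|^2\, |\widehat{f_k}(\xi)|^2 \lesssim \sum_k |\widehat{f_k}(\xi)|^2 \chi_{\alpha R_k}(\xi).
\]
Integrating over $\xi$ and applying Plancherel a second time on each $f_k$ yields
\[
\left\|\sum_k P_{R_k} f_k\right\|_{L^2}^2 \lesssim \sum_k \int_{\mathbb{R}^2} |\widehat{f_k}(\xi)|^2 \, d\xi = \sum_k \|f_k\|_{L^2}^2,
\]
as desired.

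The only genuine point that requires any thought is the first step: interpreting $P_{R_k}$ correctly and verifying that its symbol is supported in $\alpha R_k$ (so that the hypothesis $\sup_\xi \sum_k \chi_{\alpha R_k}(\xi) \lesssim 1$ can be invoked as a pointwise estimate). Once this is in place, there is no real obstacle; the Cauchy--Schwarz step uses the overlap assumption in a completely quantitative way, and no further analytic input (such as stationary phase or bilinear estimates) is needed. If $P_{R_k}$ denotes a sharp Fourier projection onto $R_k$ itself, then the conclusion is even stronger and the argument simplifies, since $\chi_{R_k} \le \chi_{\alpha R_k}$ and the cross terms after expansion vanish outside the regions of overlap, yielding the desired bound with the same constant.
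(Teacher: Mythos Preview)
Your argument is correct and is precisely the standard Plancherel/Cauchy--Schwarz almost-orthogonality argument the paper has in mind; indeed, the paper simply states ``By Plancherel's theorem, we have'' before the lemma and gives no further detail. Your reading of $P_{R_k}$ as a Fourier multiplier supported in (a subset of) $\alpha R_k$ is the intended one, and the rest is exactly as you wrote.
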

\begin{proposition}[Refined Strichartz estimate] \label{pr4.2436}
\begin{align*}
\|e^{it\Delta_{\mathbb{R}^2}} f\|_{L_{t,x,y}^4(\mathbb{R}  \times \mathbb{R}^2 \times \mathbb{T})} \lesssim
\|f\|_{L_{x}^2 H_y^{1-\frac{\epsilon_0}2}}^\frac34 \left(\sup_{Q\in \mathcal{D}} |Q|^{-\frac3{22}} \|e^{it\Delta_{\mathbb{R}^2}} f_Q \|_{L_{t,x,y}^\frac{11}2}\right)^\frac14.
\end{align*}
\end{proposition}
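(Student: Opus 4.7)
The plan is to adapt the classical bilinear/Whitney proof of the refined Strichartz estimate on $\mathbb{R}^2$ to the waveguide setting, absorbing the dependence on the compact variable $y$ via Sobolev embedding on $\mathbb{T}$. Setting $g = e^{it\Delta_{\mathbb{R}^2}} f$, I would write $\|g\|_{L^4_{t,x,y}}^4 = \||g|^2\|_{L^2_{t,x,y}}^2$. A Whitney decomposition of $\mathbb{R}^2\times\mathbb{R}^2\setminus\{\xi_1=\xi_2\}$ yields $|g|^2=\sum_{Q\sim Q'} g_Q\,\overline{g_{Q'}}$, where the sum runs over pairs of same-sized dyadic cubes $Q,Q'\in\mathcal{D}$ with $\operatorname{dist}(Q,Q')\sim\operatorname{diam}(Q)=\operatorname{diam}(Q')$. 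For each fixed $y$ the space-time Fourier supports of $g_Q\overline{g_{Q'}}$ across different Whitney pairs have finite overlap (by the nondegeneracy of $(\xi_1,\xi_2)\mapsto(\xi_1-\xi_2,|\xi_1|^2-|\xi_2|^2)$ on Whitney sets), so Lemma~\ref{leA.9} applied in $(t,x)$ and then integrated in $y$ gives
\[
\|g\|_{L^4_{t,x,y}}^4 \lesssim \sum_{Q\sim Q'} \|g_Q\,\overline{g_{Q'}}\|_{L^2_{t,x,y}}^2.
\]

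For each Whitney pair I would interpolate the $L^2_{t,x,y}$-norm between an $L^{11/6}$-endpoint and an $L^{11/4}$-endpoint with weights $3/4$ and $1/4$. The first endpoint comes from Corollary~\ref{co4.2236} combined with the Hausdorff-Young inequality $\|\mathcal{F}_x f(\cdot,y)\|_{L^p(Q)}\lesssim|Q|^{1/p-1/2}\|f_Q(\cdot,y)\|_{L^2_x}$, Hölder in $y$, and the Sobolev embedding $H^{1-\epsilon_0/2}(\mathbb{T})\hookrightarrow L^\infty(\mathbb{T})$ (valid since $1-\epsilon_0/2>1/2$), giving
\[
\|g_Q\,\overline{g_{Q'}}\|_{L^{11/6}_{t,x,y}} \lesssim |Q|^{-1/11}\,\|f_Q\|_{L^2_x H^{1-\epsilon_0/2}_y}\,\|f_{Q'}\|_{L^2_x H^{1-\epsilon_0/2}_y};
\]
the second endpoint is just Hölder, $\|g_Q\,\overline{g_{Q'}}\|_{L^{11/4}_{t,x,y}}\le\|g_Q\|_{L^{11/2}_{t,x,y}}\|g_{Q'}\|_{L^{11/2}_{t,x,y}}$. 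Interpolating and squaring, then setting $M:=\sup_Q|Q|^{-3/22}\|g_Q\|_{L^{11/2}_{t,x,y}}$ and bounding $\|g_Q\|_{L^{11/2}_{t,x,y}}\le|Q|^{3/22}M$, forces the $|Q|$-powers to cancel (since $|Q|=|Q'|$) and yields
\[
\|g_Q\,\overline{g_{Q'}}\|_{L^2_{t,x,y}}^2 \lesssim M\,\|f_Q\|_{L^2_x H^{1-\epsilon_0/2}_y}^{3/2}\,\|f_{Q'}\|_{L^2_x H^{1-\epsilon_0/2}_y}^{3/2}.
\]

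Summing over Whitney pairs, using that each $Q$ appears in boundedly many such pairs and $\sum_Q a_Q^3\le(\sup_Q a_Q)\sum_Q a_Q^2\le(\sum_Q a_Q^2)^{3/2}$ together with Plancherel $\sum_Q\|f_Q\|_{L^2_x H^{1-\epsilon_0/2}_y}^2=\|f\|_{L^2_x H^{1-\epsilon_0/2}_y}^2$, dominates the entire right-hand side by $M\cdot\|f\|_{L^2_x H^{1-\epsilon_0/2}_y}^3$. Extracting a fourth root produces the proposition. The main obstacle is the calibration of exponents in the interpolation step: the parameter $p$ in Corollary~\ref{co4.2236}, the Hausdorff-Young gain $|Q|^{1/p-1/2}$, and the interpolation weights $3/4,1/4$ must conspire so that the net $|Q|$-power in the bilinear bound combines exactly with $|Q|^{3/22}$ extracted from $\|g_Q\|_{L^{11/2}}$, producing the weight $|Q|^{-3/22}$ defining $M$ and leaving no residual $|Q|$-dependence after the Whitney sum. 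The Sobolev ingredient is clean but tied to the standing assumption $\epsilon_0<1/2$, and the almost-orthogonality of Whitney pairs in the $(t,x)$-Fourier picture is standard but relies crucially on the nondegeneracy of the paraboloid.
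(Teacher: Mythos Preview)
Your Whitney decomposition, almost-orthogonality in $(t,x)$, and the $L^{11/6}$--$L^{11/4}$ interpolation are all correct and match the paper's route; the exponents do cancel as you say, leaving $\|g_Q\overline{g_{Q'}}\|_{L^2_{t,x,y}}^2\lesssim M\,\|f_Q\|_{L^2_xH^{1-\epsilon_0/2}_y}^{3/2}\|f_{Q'}\|_{L^2_xH^{1-\epsilon_0/2}_y}^{3/2}$. The gap is the final summation. Your claim $\sum_{Q\in\mathcal{D}}\|f_Q\|_{L^2_xH^{1-\epsilon_0/2}_y}^2=\|f\|_{L^2_xH^{1-\epsilon_0/2}_y}^2$ is false: $\mathcal{D}$ contains cubes of \emph{every} scale, and Plancherel gives $\|f\|^2$ already at each fixed scale, so the total is infinite for any nonzero $f$. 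Nor does the Whitney restriction rescue the weaker $\ell^3$ bound. Take $f$ independent of $y$ with $\mathcal{F}_xf=\sum_{k=0}^K\chi_{[2^k,2^k+1]\times[0,1]}$, so $\|f\|_{L^2}^2=K{+}1$. For each $1\le j\le K{-}3$ the cube $Q=[0,2^j)^2$ has $\|f_Q\|_{L^2}^2=j$ and forms a genuine Whitney pair with $Q'=[2^{j+3},9\cdot 2^j)\times[0,2^j)$, which contains exactly one unit square. Hence $\sum_{(Q,Q')\in\mathcal{W}}\|f_Q\|^{3/2}\|f_{Q'}\|^{3/2}\gtrsim\sum_{j=1}^{K}j^{3/4}\sim K^{7/4}\gg K^{3/2}\sim\|f\|_{L^2}^3$.

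The step that loses the game is your premature H\"older bound $\|\mathcal{F}_xf(\cdot,y)\|_{L^p_\xi(Q)}\le|Q|^{1/p-1/2}\|f_Q(\cdot,y)\|_{L^2_x}$: this discards the $p<2$ gain from Corollary~\ref{co4.2236}, and that gain is exactly what makes the sum over scales converge. The paper keeps the $L^p_\xi(Q)$ norm, arriving at
\[
\sum_{Q\in\mathcal{D}}\Bigl(|Q|^{-(2-p)/p}\,\big\|\mathcal{F}_xf\big\|_{L^p_\xi L^{11/3}_y(Q\times\mathbb{T})}^{2}\Bigr)^{3/2},
\]
and bounds this by $\|f\|_{L^2_xH^{1-\epsilon_0/2}_y}^3$ via a level-set decomposition of $\xi\mapsto\|\mathcal{F}_xf(\xi,\cdot)\|_{L^{11/3}_y}$ relative to the threshold $2^{-j}$ (with $2^j$ the side-length of $Q$): on $\{\|\mathcal{F}_xf\|_{L^{11/3}_y}\ge 2^{-j}\}$ one sums in $\ell^{p/2}_Q$ and uses $p<2$ to collapse the scale sum geometrically; on the complementary set one first applies H\"older up to $L^3_\xi$ and again sums geometrically. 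Both pieces land on $\|\mathcal{F}_xf\|_{L^2_\xi L^{11/3}_y}^2\lesssim\|f\|_{L^2_xH^{1-\epsilon_0/2}_y}^2$. Everything else in your outline is sound; only this scale-summation step needs to be replaced.
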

\begin{proof}
Given distinct $\xi, \xi' \in \mathbb{R}^2$, there is a unique maximal pair of dyadic cubes $Q\ni \xi$ and $Q'\ni \xi'$ obeying
$|Q| = |Q'|$ and $4 diam(Q) \le dist(Q,Q') $.

Let $\mathcal{W}$ denote the family of all such pairs as $\xi \ne \xi'$ vary over $\mathbb{R}^2$.
According to this definition,
\begin{align}\label{eq4.4436}
\sum_{(Q,Q')\in \mathcal{W}} \chi_Q(\xi) \chi_{Q'}(\xi') = 1, \text{ for a.e. } (\xi,\xi')\in \mathbb{R}^2 \times \mathbb{R}^2.
\end{align}
Note that since $Q$ and $Q'$ are maximal, $dist(Q,Q') \le 10 diam (Q)$. In addition, this shows that given $Q$ there are a bounded number of
$Q'$ so that $(Q,Q') \in \mathcal{W}$, that is,
\begin{align}\label{eq4.4536}
\forall\, Q\in  \mathcal{W}, \ \sharp \{Q': (Q,Q') \in \mathcal{W}\} \lesssim 1.
\end{align}
In view of \eqref{eq4.4436}, we can write
\begin{align*}
(e^{it\Delta_{\mathbb{R}^2}} f)^2 = \sum_{(Q,Q')\in \mathcal{W}} e^{it\Delta_{\mathbb{R}^2}} f_Q \cdot e^{it\Delta_{\mathbb{R}^2}} f_{Q'}.
\end{align*}
We have the support of the partial space-time transform of $e^{it\Delta_{\mathbb{R}^2}} f_Q \cdot e^{it\Delta_{\mathbb{R}^2}} f_{Q'}$ satisfies
\begin{align}\label{eq4.4637}
supp_{\omega,\xi} \left(\mathcal{F}_{t,x} \left(e^{it\Delta_{\mathbb{R}^2}} f_Q \cdot e^{it\Delta_{\mathbb{R}^2}} f_{Q'}\right)(\omega,\xi)\right) \subseteq R(Q+ Q'),
\end{align}
where $Q+Q'$ denotes the Minkowski sum and
\begin{align*}
R(Q+ Q') = \left\{ (\omega, \eta) : \eta \in Q+ Q', 2 \le \frac{\omega- \frac12 |c(Q+ Q')|^2 - c(Q+ Q')(\eta- c(Q+Q'))}{diam(Q+Q')^2} \le 19 \right\},
\end{align*}
where $c(Q+ Q')$ denotes the center of the cube $Q+Q'$.
We also remind that $diam(Q+Q') = diam(Q) + diam(Q')= 2diam(Q)$.

We need to control the overlap of the Fourier supports, or rather, of the enclosing parallelepipeds.
By \cite{Killip-Visan1}, for any $\alpha \le 1.01$,
\begin{align}\label{eq4.4737}
\sup_{\omega,\eta} \sum_{(Q,Q')\in \mathcal{W}}  \chi_{\alpha  {R}(Q+ Q')} (\omega,\eta) \lesssim 1,
\end{align}
where $\alpha  {R}$ denotes the $\alpha-$dilate of $R$ with the same center.

Similar to the argument in \cite{Killip-Visan1}, we may apply Lemma \ref{leA.9}, H\"older's inequality, Corollary \ref{co4.2236} and \eqref{eq4.4536}
as follows:
\begin{align*}
\|e^{it\Delta_{\mathbb{R}^2}} f\|_{L_{t,x,y}^4}^4
= &¡¡\Big\|\sum_{(Q,Q')\in \mathcal{W}} e^{it\Delta_{\mathbb{R}^2}} f_Q \cdot e^{it\Delta_{\mathbb{R}^2}} f_{Q'} \Big\|_{L_{t,x,y}^2}^2\\
\lesssim &
 \sum_{(Q,Q')\in \mathcal{W}} \|e^{it\Delta_{\mathbb{R}^2}} f_Q \cdot e^{it\Delta_{\mathbb{R}^2}} f_{Q'} \|_{L_{t,x,y}^2}^2\\
\lesssim &  \sum_{(Q,Q')\in \mathcal{W}} \|e^{it\Delta_{\mathbb{R}^2}} f_Q \|_{L_{t,x,y}^\frac{11}2}^\frac12 \|e^{it\Delta_{\mathbb{R}^2}} f_{Q'}\|_{L_{t,x,y}^\frac{11}2}^\frac12 \Big\| \|e^{it\Delta_{\mathbb{R}^2}} f_Q \cdot e^{it\Delta_{\mathbb{R}^2}} f_{Q'}\|_{L_{t,x }^\frac{11}6} \Big\|_{L_{y}^\frac{11}6}^\frac32\\
\lesssim & \Big(\sup_{Q\in \mathcal{D}} |Q|^{-\frac3{22}} \|e^{it\Delta_{\mathbb{R}^2}} f_Q\|_{L_{t,x,y}^\frac{11}2} \Big)
\sum_{(Q,Q')\in \mathcal{W}} \Big(|Q|^{-\frac{2-p}p}
\Big\|\|\mathcal{F}_x {f}(\xi,y)\|_{L_\xi^p(Q)}\|\mathcal{F}_x {f}(\xi,y)\|_{L_\xi^p(Q')} \Big\|_{ L_y^\frac{11}6( \mathbb{T})}\Big)^\frac32\\
\lesssim & \Big(\sup_{Q\in \mathcal{D}} |Q|^{-\frac3{22}} \|e^{it\Delta_{\mathbb{R}^2}} f_Q\|_{L_{t,x,y}^\frac{11}2} \Big)
\sum_{Q\in \mathcal{D}} \Big(|Q|^{-\frac{2-p}p}
\|\mathcal{F}_x {f}(\xi,y)\|_{L_\xi^p L_y^\frac{11}3(Q \times \mathbb{T})}^2\Big)^\frac32, \text{ where } 1< p < 2.
\end{align*}
As in \cite{Killip-Visan1}, we have
\begin{align*}
\sum_{Q\in \mathcal{D}} \Big(|Q|^{-\frac{2-p}p}
\|\mathcal{F}_x {f}(\xi,y)\|_{L_\xi^p L_y^\frac{11}3(Q \times \mathbb{T})}^2\Big)^\frac32
\lesssim \|f\|_{L_{x}^2 H_y^{1-\frac{\epsilon_0}2}}^3.
\end{align*}
In fact, we now assume $\|f\|_{L_x^2 H_y^{1-\frac{\epsilon_0}2}} = 1$ and break
\begin{align*}
\|\mathcal{F}_x {f}(\xi,y)\|_{L_y^\frac{11}3(\mathbb{T})}  = \|\mathcal{F}_x {f}(\xi,y)\|_{L_y^\frac{11}3(\mathbb{T})} \chi_{\big\{\left\|\mathcal{F}_x {f}(\xi,y)\right\|_{L_y^\frac{11}3(\mathbb{T})}\ge 2^{-j}\big\}} (\xi)
+\|\mathcal{F}_x {f}(\xi,y)\|_{L_y^\frac{11}3(\mathbb{T})} \chi_{\big\{\left\|\mathcal{F}_x {f}(\xi,y)\right\|_{L_y^\frac{11}3(\mathbb{T})}\le 2^{-j}\big\}} (\xi),
\end{align*}
where the side-length of $Q$ is $2^{j}$.
We see by H\"older, Sobolev, Plancherel,
\begin{align*}
& \sum_{j\in \mathbb{Z}} \sum_{Q\in \mathcal{D}_j} \Big(|Q|^{-\frac{2-p}p} \big\|\|\mathcal{F}_x {f}(\xi,y)\|_{L_y^\frac{11}3(\mathbb{T})}
\chi_{\big\{\left\|\mathcal{F}_x {f}(\xi,y)\right\|_{L_y^\frac{11}3(\mathbb{T})}\ge 2^{-j}\big\}} (\xi) \big\|_{L_\xi^p(Q)}^2 \Big)^\frac32\\
\lesssim & \Big(\sum_{j\in \mathbb{Z}} \sum_{Q\in \mathcal{D}_j}  |Q|^{-1+\frac{p}2} \big\|\|\mathcal{F}_x {f}(\xi,y)\|_{L_y^\frac{11}3(\mathbb{T})}
\chi_{\big\{\left\|\mathcal{F}_x {f}(\xi,y)\right\|_{L_y^\frac{11}3(\mathbb{T})}\ge 2^{-j}\big\}} (\xi) \big\|_{L_\xi^p(Q)}^p\Big)^\frac3p\\
\lesssim & \Big(\int_{\mathbb{R}^2} \sum_{j:\  \|\mathcal{F}_x {f}(\xi,y)\|_{L_y^\frac{11}3 \ge 2^{-j}}} 2^{-j(2-p)} \|\mathcal{F}_x {f}(\xi,y)
\|_{L_y^\frac{11}3}^p \,\mathrm{d}\xi\Big)^\frac3p\\
\lesssim & \Big(\int_{\mathbb{R}^2} \|\mathcal{F}_x {f}(\xi,y)\|_{L_y^\frac{11}3}^2 \,\mathrm{d}\xi\Big)^\frac3p
\lesssim
\|f\|_{L_x^2 H_y^{1-\frac{\epsilon_0}2} }^\frac6p \lesssim 1.
\end{align*}
Similarly, by H\"older, Sobolev, Plancherel, we have
\begin{align*}
& \sum_{j\in \mathbb{Z}} \sum_{Q\in \mathcal{D}_j} \Big(|Q|^{-\frac{2-p}p} \big\|\|\mathcal{F}_x {f}(\xi,y)\|_{L_y^\frac{11}3(\mathbb{T})}
\chi_{\big\{\left\|\mathcal{F}_x {f}(\xi,y)\right\|_{L_y^\frac{11}3(\mathbb{T})}\le 2^{-j}\big\}} (\xi) \big\|_{L_\xi^p(Q)}^2 \Big)^\frac32\\
\lesssim & \sum_{j\in \mathbb{Z}} \sum_{Q\in \mathcal{D}_j} |Q|^\frac12 \big\|\|\mathcal{F}_x {f}(\xi,y)\|_{ L_y^\frac{11}3(\mathbb{T})}
\chi_{\big\{\left\|\mathcal{F}_x {f}(\xi,y)\right\|_{L_y^\frac{11}3(\mathbb{T})}\le 2^{-j}\big\}} (\xi) \big\|_{L_\xi^3(Q)}^3         \\
\lesssim & \int_{\mathbb{R}^2} \sum_{j: \ \|\mathcal{F}_x {f}(\xi,y)\|_{L_y^\frac{11}3} \le 2^{-j}}
 (2^{-j})^{-1} \|\mathcal{F}_x {f}(\xi,y)\|_{L_y^\frac{11}3}^3\, \mathrm{d}\xi\\
  \lesssim  & \|\mathcal{F}_x {f}(\xi,y)\|_{L_\xi^2 L_y^\frac{11}3 }^2
  \lesssim
\|f\|_{L_x^2 H_y^{1-\frac{\epsilon_0}2} }^2 \lesssim 1.
\end{align*}
Therefore, the proof is completed.
\end{proof}
To prove the inverse Strichartz estimate, we also need the following facts:
\begin{lemma}[Refined Fatou]\label{leA.5}
Suppose $\{f_n\} \subseteq L^4(\mathbb{R}^3 \times \mathbb{T})$ with $\limsup\limits_{n\to \infty} \|f_n\|_{L^4} < \infty$. If $f_n\to f$ almost everywhere, then
\begin{align*}
\|f_n\|_{L^4}^4 - \|f_n-f\|_{L^4}^4 -\|f\|_{L^4}^4 \to 0, \text{ as } n\to \infty.
\end{align*}
\end{lemma}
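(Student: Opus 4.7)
The statement is the classical Brezis--Lieb lemma applied in $L^4(\mathbb{R}^3\times\mathbb{T})$, so my plan is to follow the standard Brezis--Lieb argument adapted to this setting. First I would note that, by Fatou's lemma applied to $|f_n|^4$, the a.e.\ limit $f$ belongs to $L^4(\mathbb{R}^3\times\mathbb{T})$ with $\|f\|_{L^4}\le\liminf_n\|f_n\|_{L^4}<\infty$, so all quantities in the claimed identity are finite.

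The key is the pointwise elementary inequality: for any $\varepsilon>0$ there exists $C_\varepsilon>0$ so that for all $a,b\in\mathbb{C}$
\begin{equation*}
\bigl||a+b|^4-|a|^4-|b|^4\bigr|\le \varepsilon\,|a|^4+C_\varepsilon\,|b|^4.
\end{equation*}
This is a consequence of homogeneity and compactness (writing $b=ta$ and estimating near $t=0$ and $|t|$ large). Setting $a=f_n-f$ and $b=f$, I get
\begin{equation*}
\bigl||f_n|^4-|f_n-f|^4-|f|^4\bigr|\le \varepsilon\,|f_n-f|^4+C_\varepsilon\,|f|^4.
\end{equation*}

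Next, define
\begin{equation*}
G_{n,\varepsilon}(x,y):=\bigl(\bigl||f_n|^4-|f_n-f|^4-|f|^4\bigr|-\varepsilon\,|f_n-f|^4\bigr)_+.
\end{equation*}
Then $0\le G_{n,\varepsilon}\le C_\varepsilon|f|^4\in L^1$, and since $f_n\to f$ a.e., we have $G_{n,\varepsilon}\to 0$ a.e. Dominated convergence then gives $\int G_{n,\varepsilon}\,\mathrm{d}x\mathrm{d}y\to 0$ as $n\to\infty$. Combining this with the defining inequality,
\begin{equation*}
\int\bigl||f_n|^4-|f_n-f|^4-|f|^4\bigr|\,\mathrm{d}x\mathrm{d}y\le \int G_{n,\varepsilon}\,\mathrm{d}x\mathrm{d}y+\varepsilon\,\|f_n-f\|_{L^4}^4.
\end{equation*}
Since $\|f_n-f\|_{L^4}\le\|f_n\|_{L^4}+\|f\|_{L^4}$ is uniformly bounded by hypothesis, taking $\limsup_{n\to\infty}$ and then letting $\varepsilon\to 0$ yields the result.

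There is no real obstacle here: the only nontrivial ingredient is the elementary inequality above, whose proof is a short convexity/compactness argument, and everything else is dominated convergence together with the uniform $L^4$ bound. The setting $\mathbb{R}^3\times\mathbb{T}$ plays no role beyond being a $\sigma$-finite measure space, so no geometric structure needs to be exploited.
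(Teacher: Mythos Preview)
Your argument is correct and is exactly the standard Brezis--Lieb proof. The paper itself does not give a proof of this lemma; it is stated without proof as a known auxiliary fact, so there is nothing to compare against.
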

\begin{proposition}[Local smoothing estimate] \label{pr4.1436}
Fix $\epsilon > 0$, we have
\begin{align*}
\int_{\mathbb{R}} \int_{\mathbb{R}^2\times \mathbb{T}} \left|\left(|\nabla_x |^\frac12 e^{it\Delta_{\mathbb{R}^2}} f\right)(x,y)\right|^2 \langle x\rangle^{-1-\epsilon} \,\mathrm{d}x\mathrm{d}y \mathrm{d}t \lesssim_\epsilon \|f\|_{L_{x,y}^2(\mathbb{R}^2 \times \mathbb{T})}^2.
\end{align*}
Furthermore, if $\epsilon \ge 1$, we have
\begin{align*}
\int_{\mathbb{R}} \int_{\mathbb{R}^2\times \mathbb{T}} \left|\left(|\langle \nabla_x  \rangle |^\frac12 e^{it\Delta_{\mathbb{R}^2}} f \right)(x,y)\right|^2 \langle x\rangle^{-1-\epsilon} \,\mathrm{d}x\mathrm{d}y \mathrm{d}t \lesssim_\epsilon \|f\|_{L_{x,y}^2(\mathbb{R}^2 \times \mathbb{T})}^2.
\end{align*}
\end{proposition}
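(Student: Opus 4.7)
The approach is to reduce the estimate on $\mathbb{R}^2\times\mathbb{T}$ to the classical Kato local smoothing estimate on $\mathbb{R}^2$ by decomposing $f$ into a Fourier series in the periodic variable $y$. The crucial observation is that $e^{it\Delta_{\mathbb{R}^2}}$ acts only in the $x$-variable and commutes with multiplication by $e^{iky}$.

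Concretely, writing $f(x,y)=(2\pi)^{-1/2}\sum_{k\in\mathbb{Z}}f_k(x)e^{iky}$, one has $e^{it\Delta_{\mathbb{R}^2}}f(x,y)=(2\pi)^{-1/2}\sum_k(e^{it\Delta_{\mathbb{R}^2}}f_k)(x)e^{iky}$, and Plancherel in $y$ yields the pointwise-in-$(t,x)$ identity
\begin{align*}
\int_{\mathbb{T}}\Bigl|\,|\nabla_x|^{1/2}e^{it\Delta_{\mathbb{R}^2}}f(x,y)\Bigr|^2 dy = \sum_{k\in\mathbb{Z}}\Bigl|\,|\nabla|^{1/2}e^{it\Delta_{\mathbb{R}^2}}f_k(x)\Bigr|^2,
\end{align*}
and the analogue with $\langle\nabla_x\rangle^{1/2}$. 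Multiplying by $\langle x\rangle^{-1-\epsilon}dxdt$, summing in $k$, and using Parseval $\sum_k\|f_k\|_{L^2(\mathbb{R}^2)}^2=\|f\|_{L^2(\mathbb{R}^2\times\mathbb{T})}^2$, both claimed estimates reduce to the purely Euclidean scalar estimates on $\mathbb{R}^2$, one with $|\nabla|^{1/2}$ for all $\epsilon>0$, and one with $\langle\nabla\rangle^{1/2}$ for $\epsilon\ge 1$.

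The scalar $|\nabla|^{1/2}$ estimate is the classical Kato local smoothing, which I would prove via Plancherel in $t$: the temporal Fourier transform of $e^{it\Delta}g(x)$ is supported on the paraboloid $\omega=-|\xi|^2$, and in polar coordinates $\xi=r\theta$ one obtains
\begin{align*}
\int_{\mathbb{R}}\Bigl|\,|\nabla|^{1/2}e^{it\Delta}g(x)\Bigr|^2 dt = c\int_0^\infty r^2\Bigl|\int_{S^1}\hat g(r\theta)e^{irx\cdot\theta}d\theta\Bigr|^2 dr.
\end{align*}
Integrating against $\langle x\rangle^{-1-\epsilon}$ in $x$, expanding the square, and applying Schur's test on $S^1\times S^1$ against the Bessel-type kernel $\widehat{\langle\cdot\rangle^{-1-\epsilon}}(r(\theta-\theta'))$ combined with Parseval on the sphere delivers the bound $\|g\|_{L^2}^2$. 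For the $\langle\nabla\rangle^{1/2}$ version, I would decompose $g=P_{\le 1}g+P_{>1}g$. On the high-frequency piece $\langle\xi\rangle^{1/2}\lesssim|\xi|^{1/2}$, so the first estimate applies. On the low-frequency piece, since $\langle\nabla\rangle^{1/2}P_{\le 1}$ is a bounded $L^2$-multiplier, one reduces to $\int\int|e^{it\Delta}P_{\le 1}g|^2\langle x\rangle^{-1-\epsilon}dxdt\lesssim\|g\|_{L^2}^2$; the same polar-coordinates computation now produces an integrand proportional to $r$ (rather than $r^2$) with $r$-integration confined to $[0,1]$, and the weaker decay is compensated by the near-integrability of $\langle x\rangle^{-1-\epsilon}$ on $\mathbb{R}^2$ when $\epsilon\ge 1$, combined with the dispersive decay $\|e^{it\Delta}P_{\le 1}g\|_{L^\infty_x}\lesssim\langle t\rangle^{-1/2}\|g\|_{L^2}$ (obtained from the $L^2$-norm of the convolution kernel of $e^{it\Delta}P_{\le 1}$).

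The main obstacle is the low-frequency part of the $\langle\nabla\rangle^{1/2}$ estimate, where the smoothing gain from $|\nabla|^{1/2}$ is no longer available and one must rely on weight decay alone. The borderline case $\epsilon=1$ is especially delicate, since $\langle x\rangle^{-2}$ is not integrable on $\mathbb{R}^2$ and its Fourier transform (a Bessel $K_0$-function) carries a logarithmic singularity at the origin; absorbing this log requires carefully exploiting the polar Jacobian $r\,dr$ at $r=0$ together with Schur's test on the unit circle, which accounts for the threshold $\epsilon\ge 1$ in the statement.
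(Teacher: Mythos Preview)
The paper states this proposition without proof, treating it as a known consequence of Kato smoothing on $\mathbb{R}^2$. Your reduction via Fourier expansion in $y$ is correct and is exactly how one passes from $\mathbb{R}^2\times\mathbb{T}$ to $\mathbb{R}^2$; the polar-coordinates/Schur argument for the homogeneous $|\nabla_x|^{1/2}$ estimate is the standard one and is fine.

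There is, however, a genuine error in your low-frequency treatment of the $\langle\nabla_x\rangle^{1/2}$ estimate. The asserted decay $\|e^{it\Delta}P_{\le 1}g\|_{L^\infty_x}\lesssim\langle t\rangle^{-1/2}\|g\|_{L^2}$ is false: this would force $\|K_t\|_{L^2_x}\lesssim\langle t\rangle^{-1/2}$ for the convolution kernel $K_t$ of $e^{it\Delta}P_{\le 1}$, but Plancherel gives $\|K_t\|_{L^2}=\|\varphi\|_{L^2}$, a constant in $t$. So the dispersive route you sketch does not close. The polar-coordinates argument you also mention \emph{does} work when $\epsilon>1$: then $\langle x\rangle^{-1-\epsilon}\in L^1(\mathbb{R}^2)$, its Fourier transform $K$ is bounded, and Cauchy--Schwarz on the circle gives
\[
\int_0^{1} r\int_{S^1}\int_{S^1}\hat g(r\theta)\overline{\hat g(r\theta')}K(r(\theta-\theta'))\,d\theta\,d\theta'\,dr
\lesssim \int_0^{1} r\int_{S^1}|\hat g(r\theta)|^2\,d\theta\,dr\le\|g\|_{L^2}^2,
\]
which handles the low frequencies directly.

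At the endpoint $\epsilon=1$ your caution is more than warranted: the inhomogeneous estimate actually fails there. Taking $\hat g=\delta^{-1}\chi_{\{\delta/2\le|\xi|\le\delta\}}$ (so $\|g\|_{L^2}\sim 1$ and $\langle\nabla\rangle^{1/2}g\approx g$), the polar formula and $\widehat{\langle\cdot\rangle^{-2}}(\eta)=cK_0(|\eta|)\sim\log(1/|\eta|)$ give a left-hand side of order
\[
\delta^{-2}\int_{\delta/2}^{\delta} r\Bigl(\int_{S^1}\int_{S^1}\log\tfrac{1}{r|\theta-\theta'|}\,d\theta\,d\theta'\Bigr)\,dr
\sim \delta^{-2}\cdot\delta^2\log\tfrac{1}{\delta}=\log\tfrac{1}{\delta}\to\infty.
\]
Thus the hypothesis in the paper should presumably read $\epsilon>1$; this is harmless for the application, where only some fixed large $\epsilon$ is needed to feed Rellich--Kondrashov in the inverse Strichartz argument.
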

We can now prove the inverse Strichartz estimate.
\begin{proposition}[Inverse Strichartz estimate] \label{pr4.2536}
For $\{f_n\} \subseteq L_x^2 H_y^{1-\frac{\epsilon_0}2} $ satisfying  
\begin{align}\label{eq3.948}
\lim_{n\to \infty} \|f_n\|_{L_x^2H_y^{1-\frac{\epsilon_0}2} } = A \text{ and } \lim_{n\to \infty} \|e^{it\Delta_{\mathbb{R}^2}} f_n\|_{L_{t,x,y}^4} = \epsilon,
\end{align}
there exist a subsequence in $n$, $\phi \in L_x^2 H_y^{1-\frac{\epsilon_0}2} $, $\{\lambda_n\} \subseteq (0,\infty)$, $\xi_n\in \mathbb{R}^2$,
and $(t_n,x_n,y_n)\in \mathbb{R}\times \mathbb{R}^2 \times \mathbb{T}$ so that along the subsequence, we have the following:
\begin{align}
& \lambda_n e^{-i\xi_n(\lambda_n x+ x_n) } (e^{it_n\Delta_{\mathbb{R}^2}} f_n)(\lambda_n x + x_n,y+y_n) \rightharpoonup  \phi(x,y)\  \text{ in } L_x^2 H_y^{1-\frac{\epsilon_0}2} , \text{ as } n\to \infty, \\
 & \lim_{n\to \infty} \left(\|f_n(x,y)\|_{L_x^2 H_y^{1-\frac{\epsilon_0}2}}^2 - \|f_n-\phi_n\|_{L_x^2 H_y^{1-\frac{\epsilon_0}2}}^2 \right) = \|\phi\|
_{L_x^2 H_y^{1-\frac{\epsilon_0}2}}^2
\gtrsim A^2 \left(\frac\epsilon A\right)^{24},
\label{eq4.4936} \\
&  \limsup_{n\to \infty} \|e^{it\Delta_{\mathbb{R}^2}} (f_n-\phi_n) \|_{L_{t,x,y}^4}^4 \le \epsilon^4 \left( 1- c \left(\frac\epsilon A\right)^\beta\right), \label{eq4.5036}
\end{align}
where $c$ and $\beta$ are constants,
\begin{align*}
\phi_n(x,y) = \frac1{\lambda_n} e^{ix\xi_n} \left(e^{-i\frac{t_n}{\lambda_n^2} \Delta_{\mathbb{R}^2}} \phi\right)\left(\frac{x-x_n}{\lambda_n},y-y_n\right).
\end{align*}
Moreover, if $\{f_n\}$ is bounded in $H_{x,y}^1(\mathbb{R}^2 \times \mathbb{T})$, we can take $\lambda_n \ge 1$, $|\xi_n| \lesssim 1$, with
$\phi \in L_x^2 H_y^{1 }(\mathbb{R}^2 \times \mathbb{T}) $, and
\begin{align}
\lambda_n e^{-i\xi_n(\lambda_n x+ x_n) } (e^{it_n\Delta_{\mathbb{R}^2}} f_n)(\lambda_n x + x_n,y+y_n) \rightharpoonup  \phi(x,y)\  \text{ in } L_x^2 H_y^{1 } , \text{ as } n\to \infty, \label{eq3.1248}\\
\lim_{n\to \infty} \left(\|f_n(x,y)\|_{L_x^2 H_y^{1 }}^2 - \|f_n-\phi_n\|_{L_x^2 H_y^{1 }}^2 \right) = \|\phi\|
_{L_x^2 H_y^{1 }}^2
\gtrsim A^2 \left(\frac\epsilon A\right)^{24}.\label{eq3.1348}
\end{align}

\end{proposition}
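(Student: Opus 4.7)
The plan is to follow the standard bubble-extraction scheme, adapted to the mixed $L^2_x H^{1-\epsilon_0/2}_y$ setting and built on Proposition \ref{pr4.2436}. First I would apply the refined Strichartz inequality to $f_n$: since $\|e^{it\Delta_{\mathbb{R}^2}}f_n\|_{L^4_{t,x,y}}\to\epsilon$ and $\|f_n\|_{L^2_xH^{1-\epsilon_0/2}_y}\to A$, there must exist a dyadic cube $Q_n\in\mathcal{D}$ with
\begin{equation*}
|Q_n|^{-3/22}\|e^{it\Delta_{\mathbb{R}^2}}(f_n)_{Q_n}\|_{L^{11/2}_{t,x,y}}\gtrsim \epsilon\,(\epsilon/A)^{3}.
\end{equation*}
Define the scale $\lambda_n:=|Q_n|^{-1/2}$ and frequency $\xi_n:=c(Q_n)$. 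To convert this $L^{11/2}$ concentration into pointwise concentration I would interpolate between $L^4$ and $L^\infty$: Strichartz gives $\|e^{it\Delta}(f_n)_{Q_n}\|_{L^4_{t,x,y}}\lesssim A$, while Bernstein in $x$ over $Q_n$ combined with Sobolev embedding $H^{1-\epsilon_0/2}_y\hookrightarrow L^\infty_y$ (valid since $1-\epsilon_0/2>1/2$) gives $\|e^{it\Delta}(f_n)_{Q_n}\|_{L^\infty_{t,x,y}}\lesssim |Q_n|^{1/2}A$. From $\|g\|_{L^{11/2}}^{11/2}\le\|g\|_{L^\infty}^{3/2}\|g\|_{L^4}^{4}$ one extracts a point $(t_n,x_n,y_n)\in\mathbb{R}\times\mathbb{R}^2\times\mathbb{T}$ such that
\begin{equation*}
\bigl|\bigl(e^{it_n\Delta_{\mathbb{R}^2}}(f_n)_{Q_n}\bigr)(x_n,y_n)\bigr|\gtrsim |Q_n|^{1/2}A\,(\epsilon/A)^{C}
\end{equation*}
for some fixed $C>0$.

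Next, with the symmetries that leave $L^2_xH^{1-\epsilon_0/2}_y$ invariant, set
\begin{equation*}
g_n(x,y):=\lambda_n e^{-i\xi_n(\lambda_n x+x_n)}\bigl(e^{it_n\Delta_{\mathbb{R}^2}}f_n\bigr)(\lambda_n x+x_n,y+y_n),
\end{equation*}
so that $\|g_n\|_{L^2_xH^{1-\epsilon_0/2}_y}\le A+o(1)$ and, after the renormalization, the frequency cube $Q_n$ is carried to a cube of unit side centred at the origin. Passing to a subsequence I obtain a weak limit $g_n\rightharpoonup \phi$ in $L^2_xH^{1-\epsilon_0/2}_y$. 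Testing $g_n$ against a fixed Schwartz function $\psi$ whose $x$-Fourier transform is supported in a unit cube about $0$ (so that $\psi*\mathbf{1}_{Q_n\text{-renormalized}}$ picks up the whole of the relevant bump), the pointwise lower bound at the origin, together with the normalization, yields $|\langle g_n,\psi\rangle|\gtrsim A\,(\epsilon/A)^{12}$, hence $\|\phi\|_{L^2_xH^{1-\epsilon_0/2}_y}^2\gtrsim A^2(\epsilon/A)^{24}$, which is \eqref{eq4.4936}. The decoupling identity itself,
\begin{equation*}
\|f_n\|^2-\|f_n-\phi_n\|^2-\|\phi_n\|^2=-2\Re\langle f_n-\phi_n,\phi_n\rangle,
\end{equation*}
is routine from the weak convergence after undoing the symmetries (they are all unitary on $L^2_xH^{1-\epsilon_0/2}_y$).

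For the refined $L^4$ bound \eqref{eq4.5036} I would use Lemma \ref{leA.5}, which requires upgrading weak convergence to almost-everywhere convergence of $e^{it\Delta_{\mathbb{R}^2}}g_n\to e^{it\Delta_{\mathbb{R}^2}}\phi$. This is exactly what Proposition \ref{pr4.1436} is designed for: the local smoothing estimate places $e^{it\Delta_{\mathbb{R}^2}}g_n$ in a weighted $H^{1/2}_x L^2_y$-type space that embeds compactly into $L^2_{\mathrm{loc}}$ in $(t,x,y)$; a diagonal subsequence then converges a.e., and Refined Fatou gives
\begin{equation*}
\|e^{it\Delta}f_n\|_{L^4}^4-\|e^{it\Delta}(f_n-\phi_n)\|_{L^4}^4\to\|e^{it\Delta}\phi\|_{L^4}^4\gtrsim\|\phi\|^4_{L^2_xH^{1-\epsilon_0/2}_y}/A^2,
\end{equation*}
by a suitable Strichartz-style lower bound (or by the pointwise concentration alone), which combined with the size bound on $\|\phi\|$ gives the quantitative gain $1-c(\epsilon/A)^{\beta}$.

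Finally, for the bounded-in-$H^1$ case I would argue by elimination of degenerate scenarios. If $\lambda_n\to 0$, then $\|\phi_n\|_{H^1_{x,y}}\gtrsim \lambda_n^{-1}\|\phi\|_{L^2_xH^{1}_y}\to\infty$, contradicting $H^1$-boundedness; hence we may assume $\lambda_n\ge 1$ along a subsequence. Similarly, if $|\xi_n|\to\infty$, the Galilean modulation forces $\|\phi_n\|_{H^1_{x,y}}\gtrsim|\xi_n|\|\phi\|\to\infty$; hence $|\xi_n|\lesssim 1$. Under these constraints the weak limit $\phi$ inherits $H^1_y$-regularity in $y$ (the $y$-frequencies are unaffected by the symmetries), giving \eqref{eq3.1248}--\eqref{eq3.1348}. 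I expect the most delicate step to be Step~2, namely upgrading the $L^{11/2}$ lower bound on the frequency-localized evolution to a quantitatively sharp pointwise concentration while keeping track of the explicit exponent $24$ in $(\epsilon/A)^{24}$; the a.e.\ upgrade in Step~3 via local smoothing is technically the subtlest point but is largely standard once the right weighted norm is chosen.
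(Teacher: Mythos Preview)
Your overall strategy matches the paper's: refined Strichartz (Proposition \ref{pr4.2436}) $\to$ dyadic cube selection $\to$ interpolation to $L^\infty$ for pointwise concentration $\to$ weak limit $\to$ local smoothing (Proposition \ref{pr4.1436}) plus Rellich--Kondrashov for a.e.\ convergence $\to$ refined Fatou (Lemma \ref{leA.5}). One clean trick in the paper that you do not use: to pass from the pointwise value at $(x_n,y_n)$ to a lower bound on $\|\phi\|$, the paper pairs $g_n$ with the distribution $h(x)\delta_0(y)$ (where $\mathcal F_x h=\mathbf 1_{[-1/2,1/2]^2}$), which lies in the dual of $L^2_xH^{1-\epsilon_0/2}_y$ precisely because $1-\epsilon_0/2>\tfrac12$; this makes the exponent $24$ in \eqref{eq4.4936} fall out immediately. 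Your Schwartz-function test is less direct. Also, in the interpolation step you should use $\|e^{it\Delta}(f_n)_{Q_n}\|_{L^4_{t,x,y}}\lesssim\epsilon$ from the hypothesis rather than $\lesssim A$ from Strichartz; otherwise you will not recover the stated exponent.

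There is, however, a genuine gap in your treatment of the $H^1$-bounded case. Your claim ``$\|\phi_n\|_{H^1_{x,y}}\gtrsim\lambda_n^{-1}\|\phi\|_{L^2_xH^1_y}$'' is false: every $x$-symmetry you apply (dilation $\frac1\lambda f(\cdot/\lambda)$, translation, modulation, free $x$-propagation) is an isometry of $L^2_xH^1_y$, so $\|\phi_n\|_{L^2_xH^1_y}=\|\phi\|_{L^2_xH^1_y}$ identically and no factor of $\lambda_n^{-1}$ appears. To produce $\lambda_n^{-1}$ you would need $\|\phi_n\|_{\dot H^1_xL^2_y}\sim\lambda_n^{-1}\|\phi\|_{\dot H^1_xL^2_y}$, but nothing tells you $\phi\in\dot H^1_x$. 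Even if such a blow-up held, you have no a priori $H^1_{x,y}$ control on $\phi_n$ to contradict: the decoupling you have established is only in $L^2_xH^{1-\epsilon_0/2}_y$, so an inequality of the form $\|\phi_n\|_{H^1}\le\|f_n\|_{H^1}+o(1)$ is not available. The paper avoids this entirely by a pre-processing step: since
\[
\|P^x_{\ge R}f_n\|_{L^2_xH^{1-\epsilon_0/2}_y}\lesssim R^{-\epsilon_0/2}\|f_n\|_{H^1_{x,y}}\to 0\quad\text{as }R\to\infty,
\]
one replaces $f_n$ by $P^x_{\le R}f_n$ for $R=R(A,\epsilon)$ large without disturbing \eqref{eq3.948}, and then the cubes $Q_n$ produced by the refined Strichartz argument are forced to satisfy $|Q_n|\lesssim R^2$ and $|c(Q_n)|\lesssim R$, giving $\lambda_n\gtrsim R^{-1}$ and $|\xi_n|\lesssim R$ directly. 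The upgrade $\phi\in L^2_xH^1_y$ then follows from the $L^2_xH^1_y$-boundedness of $g_n$ (these symmetries preserve that norm) and uniqueness of weak limits.
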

\begin{proof}
By Proposition \ref{pr4.2436}, there exists $\{Q_n\} \subseteq \mathcal{D}$ so that
\begin{align}\label{eq4.5236}
\epsilon^4 A^{-3} \lesssim \liminf_{n\to \infty} |Q_n|^{-\frac3{22}} \|e^{it\Delta_{\mathbb{R}^2}} (f_n)_{Q_n}\|_{L_{t,x,y}^\frac{11}2}.
\end{align}
Let $\lambda_n $ to be the inverse of the side-length of $Q_n$, which implies $|Q_n|= \lambda_n^{-2}$.
We also set $\xi_n = c(Q_n)$, which is the center of the cube. 
By H\"older's inequality, we have
\begin{align}
\liminf_{n\to \infty} |Q_n|^{-\frac3{22}} \|e^{it\Delta_{\mathbb{R}^2}} (f_n)_{Q_n} \|_{L_{t,x,y}^\frac{11}2}
& \lesssim \liminf_{n\to \infty} |Q_n|^{-\frac3{22}} \|e^{it\Delta_{\mathbb{R}^2}} (f_n)_{Q_n} \|_{L_{t,x,y}^4}^\frac8{11} \|e^{it\Delta_{\mathbb{R}^2}} (f_n)_{Q_n} \|_{L_{t,x,y}^\infty}^\frac3{11} \nonumber \\
&¡¡\lesssim \liminf_{n\to \infty} \lambda_n^\frac3{11} \epsilon^\frac8{11} \|e^{it\Delta_{\mathbb{R}^2}} (f_n)_{Q_n} \|_{L_{t,x,y}^\infty}^\frac3{11}.
\end{align}
Thus by \eqref{eq4.5236}, there exists $(t_n,x_n,y_n) \in \mathbb{R} \times \mathbb{R}^2 \times \mathbb{T}$ so that
\begin{align}\label{eq4.5336}
\liminf_{n\to \infty} \lambda_n |(e^{it_n \Delta_{\mathbb{R}^2}} (f_n)_{Q_n} )(x_n,y_n)|\gtrsim \epsilon^{12} A^{-11}.
\end{align}
By the weak compactness of $L_x^2 H_y^{1-\frac{\epsilon_0}2}$, we have
\begin{align*}
\lambda_n e^{-i\xi_n(\lambda_n x+x_n)} (e^{it_n \Delta_{\mathbb{R}^2}} f_n)(\lambda_n x + x_n, y + y_n) \rightharpoonup  \phi(x,y) \text{ in } L_x^2 H_y^{1-\frac{\epsilon_0}2}, \text{ as } n\to \infty.
\end{align*}
Define $h$ so that $\mathcal{F}_x {h}$
is the characteristic function of the cube $[-\frac12,\frac12]^2$. Then $h(x) \delta_0(y)\in L_x^2 H_y^{-1+\frac{\epsilon_0}2}(\mathbb{R}^2 \times \mathbb{T})$.
From \eqref{eq4.5336}, we obtain
\begin{align}\label{eq4.5436}
& |\langle h(x) \delta_0(y), \phi(x,y)\rangle_{x,y}| \nonumber \\
= & \lim_{n\to \infty} \left|\int_{\mathbb{R}^2 \times \mathbb{T}} \delta_0(y) \bar{h}(x) \lambda_n e^{-i\xi_n(\lambda_n x + x_n)} (e^{it_n \Delta_{\mathbb{R}^2}} f_n)(\lambda_n x +x_n, y+ y_n) \,\mathrm{d}x \mathrm{d}y \right| \nonumber \\
=  & \lim_{n\to \infty} \lambda_n |(e^{it_n\Delta_{\mathbb{R}^2}} (f_n)_{Q_n})(x_n,y_n)|
\gtrsim \epsilon^{12} A^{-11},
\end{align}
which implies \eqref{eq4.4936}, and $\phi$ carries non-trivial norm in $ L_x^2 H_y^{1-\frac{\epsilon_0}2}$. 
We are left to verify \eqref{eq4.5036}.
By Proposition \ref{pr4.1436} and the Rellich-Kondrashov theorem, we have
\begin{align*}
e^{it\Delta_{\mathbb{R}^2}} \left( \lambda_n e^{-i \xi_n(\lambda_n x + x_n) } (e^{it_n\Delta_{\mathbb{R}^2}} f_n)(\lambda_n x + x_n, y+ y_n)\right) \to
e^{it\Delta_{\mathbb{R}^2}} \phi(x,y),\text{ a.e.  } (t, x,y) \in \mathbb{R} \times \mathbb{R}^2\times \mathbb{T}, \text{ as $n\to \infty$}.
\end{align*}
Then by applying Lemma \ref{leA.5},
 we obtain
\begin{align*}
\|e^{it \Delta_{\mathbb{R}^2}} f_n\|_{L_{t,x,y}^4}^4 - \|e^{it\Delta_{\mathbb{R}^2}} (f_n-\phi_n)\|_{L_{t,x,y}^4}^4 - \|e^{it\Delta_{\mathbb{R}^2}} \phi_n\|_{L_{t,x,y}^4}^4 \to 0, \text{ as } n\to \infty.
\end{align*}
Together with \eqref{eq4.5436}, we obtain \eqref{eq4.5036}.

If $\{f_n\}$ is bounded in $H^1_{x,y}(\mathbb{R}^2\times \mathbb{T})$, 
we have
\begin{align*}
\limsup\limits_{n\to \infty} \|P_{\ge R}^x f_n\|_{L_x^2 H_y^{1-\frac{\epsilon_0}2}}
 & \lesssim 
\limsup\limits_{n\to \infty} \langle R\rangle^{-\frac{\epsilon_0}2} \|f_n\|_{H_{x,y}^1} \to 0, \text{ as  }  R\to \infty.
\end{align*}
For $R\in 2^{\mathbb{Z}}$ large enough depending on $A$ and $\epsilon$, by \eqref{eq3.948}, $P_{\le R}^x f_n$ satisfies
\begin{align*}
\lim\limits_{n\to \infty} \|P_{\le R}^x f_n\|_{L_x^2 H_y^{1-\frac{\epsilon_0}2}}  & \ge \lim\limits_{n\to \infty} \|f_n\|_{L_x^2 H_y^{1-\frac{\epsilon_0}2}}
- \lim\limits_{n\to \infty} \|P_{\ge R}^x f_n\|_{L_x^2 H_y^{1-\frac{\epsilon_0}2}} \ge \frac12 A, 
\end{align*} 
and
\begin{align*}
\lim\limits_{n\to \infty} \|e^{it\Delta_{\mathbb{R}^2}} P_{\le R}^x f_n\|_{L_{t,x,y}^4} & \ge \lim\limits_{n\to \infty} \|e^{it\Delta_{\mathbb{R}^2}} f_n\|_{L_{t,x,y}^4} - \lim\limits_{n\to \infty} \|e^{it\Delta_{\mathbb{R}^2}} P_{\ge R}^x  f_n\|_{L_{t,x,y}^4}\\
& \ge  \lim\limits_{n\to \infty} \|e^{it\Delta_{\mathbb{R}^2}} f_n\|_{L_{t,x,y}^4} -   \lim\limits_{n\to \infty} C \| P_{\ge R}^x f_n\|_{L_x^2 H_y^{1-\frac{\epsilon_0}2}} \ge \frac\epsilon2.
\end{align*}
So we can replace $f_n$ by $P_{\le R}^x f_n$ in the assumption of the proposition, for $R = R(A,\epsilon) > 0$ large enough, then we can take $\{Q_n\} \subseteq \mathcal{D}$ and
$|Q_n|\lesssim R^2$, then $\lambda_n \gtrsim R^{-1}$, and $|\xi_n| = |c(Q_n)| \lesssim R$.
In the proof above, we see since $\{f_n\}$ is bounded in $H^1_{x,y}(\mathbb{R}^2\times \mathbb{T})$, we can obtain
\begin{align*}
\lambda_n e^{-i\xi_n(\lambda_n x+x_n)} (e^{it_n \Delta_{\mathbb{R}^2}} f_n)(\lambda_n x + x_n, y + y_n) \rightharpoonup  \phi(x,y)
\end{align*}
holds for some $\phi \in L_x^2 H_y^{1 }(\mathbb{R}^2 \times \mathbb{T})$.  
We also see $h(x) \delta_0(y)\in L_x^2 H_y^{-1 }(\mathbb{R}^2 \times \mathbb{T})$, then as \eqref{eq4.5436}, we have \eqref{eq3.1348}.
\end{proof}
By using Proposition \ref{pr4.2536} repeatedly until the remainder has asymptotically trivial linear
evolution in $L_{t,x,y}^4$, we can obtain the following result:
\begin{proposition}[Linear profile decomposition in $L_x^2 H_y^{1-\frac{\epsilon_0}2} (\mathbb{R}^2 \times \mathbb{T})$]  \label{pr4.1636}
Let $\{u_n\}$ be a bounded sequence in $L_x^2 H_y^{1-\frac{\epsilon_0}2}(\mathbb{R}^2 \times \mathbb{T})$. Then (after passing to a subsequence if necessary) there
exists $J^*\in  \{0,1, \cdots \} \cup \{\infty\}$, functions $\phi^j \subseteq L_x^2 H_y^{1-\frac{\epsilon_0}2}$,
$(\lambda_n^j, t_n^j, x_n^j,\xi_n^j )_{n\ge 1} \subseteq (0,\infty) \times \mathbb{R} \times \mathbb{R}^2 \times \mathbb{R}^2$,
so that for any $J\le J^*$, defining $w_n^J$ by
\begin{align}\label{eq3.441}
u_n(x,y) = & \sum_{j=1}^J \frac1{\lambda_n^j} e^{ix\xi_n^j} (e^{it_n^j \Delta_{\mathbb{R}^2  } } \phi^j)\left(\frac{x-x_n^j}{\lambda_n^j}, y-y_n^j\right) + w_n^J(x,y),
\end{align}
we have the following properties:
\begin{align*}
& \limsup_{n\to \infty} \|e^{it\Delta_{\mathbb{R}^2  }} w_n^J\|_{L_{t,x,y}^4(\mathbb{R} \times \mathbb{R}^2 \times \mathbb{T} )} \to 0, \ \text{ as } J\to \infty, \\
& \lambda_n^j e^{-it_n^j \Delta_{\mathbb{R}^2  }} 
\left(e^{-i(\lambda_n^j x + x_n^j) \xi_n^j} w_n^J(\lambda_n^j + x_n^j, y+ y_n^j)\right) \rightharpoonup 0  \text{ in } L_x^2 H_y^{1-\frac{\epsilon_0}2}(\mathbb{R}^2 \times \mathbb{T}),  \text{ as  } n\to \infty,\text{ for each }  j\le J,\\
&\sup_{J} \lim_{n\to \infty} \left(\|u_n\|_{L_x^2 H_y^{1-\frac{\epsilon_0}2}}^2 - \sum_{j=1}^J \|\phi^j\|_{L_x^2 H_y^{1-\frac{\epsilon_0}2}}^2 - \|w_n^J\|_{L_x^2 H_y^{1-\frac{\epsilon_0}2}}^2\right) = 0,
\end{align*}
and lastly, for $j\ne j'$, and $n\to \infty$,
\begin{align*}
\frac{\lambda_n^j}{\lambda_n^{j'}} + \frac{\lambda_n^{j'}}{\lambda_n^j} + \lambda_n^j \lambda_n^{j'} |\xi_n^j - \xi_n^{j'}|^2 + \frac{|x_n^j-x_n^{j'}|^2 } {\lambda_n^j \lambda_n^{j'}} + |y_n^j-y_n^{j'}| + \frac{|(\lambda_n^j)^2 t_n^j -(\lambda_n^{j'})^2 t_n^{j'}|}{\lambda_n^j \lambda_n^{j'}} \to \infty.
\end{align*}
Moreover, if $ u_n $ is bounded in $H^1(\mathbb{R}^2\times \mathbb{T})$, we can take $\lambda_n^j \ge 1$, and $|\xi_n^j| \le C_j$, for every $1\le j \le J$.
We also have $\{\phi^j\}_{j=1}^{J} \subseteq L_x^2 H_y^{1 }$,
\begin{align*}
\lambda_n^j e^{-it_n^j \Delta_{\mathbb{R}^2  }}  
\left(e^{-i(\lambda_n^j x + x_n^j) \xi_n^j} w_n^J(\lambda_n^j + x_n^j, y+ y_n^j)\right)
\rightharpoonup & 0  \text{ in } L_x^2 H_y^{1 },  \text{ as  } n\to \infty,\text{ for each }  j\le J,\\
\sup_{J} \lim_{n\to \infty} \left(\|u_n\|_{L_x^2 H_y^{1 }}^2 - \sum_{j=1}^J \|\phi^j\|_{L_x^2 H_y^{1 }}^2 - \|w_n^J\|_{L_x^2 H_y^{1 }}^2\right) = &  0.
\end{align*}
\end{proposition}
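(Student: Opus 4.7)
The plan is to prove Proposition \ref{pr4.1636} by iterating the inverse Strichartz estimate of Proposition \ref{pr4.2536}, in the standard manner of Bahouri--G\'erard/Keraani. Set $A_0 = \limsup_n \|u_n\|_{L_x^2 H_y^{1-\epsilon_0/2}}$ and $\epsilon_0^* = \limsup_n \|e^{it\Delta_{\mathbb{R}^2}} u_n\|_{L_{t,x,y}^4}$. If $\epsilon_0^* = 0$ we stop with $J^* = 0$; otherwise apply Proposition \ref{pr4.2536} to extract a frame $(\lambda_n^1,t_n^1,x_n^1,\xi_n^1,y_n^1)$ and a profile $\phi^1\in L_x^2 H_y^{1-\epsilon_0/2}$, and define $w_n^1 := u_n - \phi_n^1$ as in the statement. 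Iterate: at step $J+1$ apply Proposition \ref{pr4.2536} to $\{w_n^J\}$ (passing to a subsequence) to extract $\phi^{J+1}$ and a new frame. If at some stage $\limsup_n\|e^{it\Delta_{\mathbb{R}^2}}w_n^J\|_{L_{t,x,y}^4}=0$ we set $J^* = J$; otherwise $J^* = \infty$. A standard diagonal argument yields a single subsequence along which all the listed properties hold for every finite $J$.

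The key quantitative step is that each extraction removes a nontrivial mass. Writing $A_J = \limsup_n \|w_n^J\|_{L_x^2 H_y^{1-\epsilon_0/2}}$ and $\epsilon_J = \limsup_n \|e^{it\Delta_{\mathbb{R}^2}} w_n^J\|_{L_{t,x,y}^4}$, estimate \eqref{eq4.4936} gives $\|\phi^{J+1}\|_{L_x^2 H_y^{1-\epsilon_0/2}}^2 \gtrsim A_J^2(\epsilon_J/A_J)^{24}$, while \eqref{eq4.5036} gives $\epsilon_{J+1}^4 \le \epsilon_J^4(1 - c(\epsilon_J/A_J)^\beta)$. Since $A_J$ is nonincreasing and bounded by $A_0$, and $\sum_{j\ge 1}\|\phi^j\|^2 \le A_0^2$ by the Pythagorean identity (see below), we must have $\epsilon_J/A_J \to 0$, and feeding this back into the recursion for $\epsilon_J$ yields $\epsilon_J\to 0$, which is the decay of the remainder.

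The Pythagorean decoupling of the $L_x^2 H_y^{1-\epsilon_0/2}$ norms at each step is the content of \eqref{eq4.4936}; summing over $j\le J$ gives the claimed identity, up to the $o(1)$ error as $n\to\infty$. For the asymptotic orthogonality of frames \eqref{eq3.141} (with the $y_n^j$ included), I will argue by induction on $J$: suppose the orthogonality holds for frames $1,\dots,J$, and that the frame $(\lambda_n^{J+1},t_n^{J+1},x_n^{J+1},\xi_n^{J+1},y_n^{J+1})$ extracted from $w_n^J$ is \emph{not} asymptotically orthogonal to some earlier frame $j\le J$. Then, along a subsequence, the parameters converge (in the natural rescaled sense) to those of frame $j$, and applying the same rescaling-translation-modulation-time-shift transformation to $w_n^J$ as in the extraction, one finds a nontrivial weak limit. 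But $w_n^J$, when adapted to the $j$-th frame, converges weakly to $0$ by construction, yielding a contradiction. This argument is carried out exactly as in Keraani \cite{CK} or Killip--Visan \cite{Killip-Visan1}, with the additional translation parameter $y_n^j$ on $\mathbb{T}$ handled identically since $\mathbb{T}$ is a group under translation.

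The $H^1$ addendum follows by running the same iteration using the last part of Proposition \ref{pr4.2536}, which provides $\lambda_n^j\ge 1$, $|\xi_n^j|\lesssim 1$ (with the constant depending on $j$ through the $H^1$ bound at that step), and $\phi^j\in L_x^2 H_y^1$, together with the Pythagorean decoupling for the $L_x^2 H_y^1$ norm. Note that the $H^1$ bound of $w_n^J$ stays uniform in $n$ but may grow with $j$ across iterations, which only affects the constants $C_j$ in $|\xi_n^j|\le C_j$ but not the qualitative conclusion. The main obstacle is purely bookkeeping in the orthogonality argument: carefully tracking the five parameters $(\lambda,t,x,\xi,y)$ through rescaling, Galilean boost, spatial and temporal translations, and verifying that the weak-to-zero property of $w_n^J$ (after adaptation to any earlier frame) is preserved under extraction of the next profile, which is where the inductive hypothesis is used.
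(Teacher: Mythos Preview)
Your proposal is correct and follows exactly the approach the paper takes: the paper's own proof is the single sentence ``By using Proposition \ref{pr4.2536} repeatedly until the remainder has asymptotically trivial linear evolution in $L_{t,x,y}^4$, we can obtain the following result,'' and your write-up spells out precisely this iteration together with the standard Keraani/Killip--Visan bookkeeping for the orthogonality of frames and the decay of the remainder. The $H^1$ addendum is likewise handled in the paper only via the ``moreover'' clause of Proposition \ref{pr4.2536}, so your treatment (noting that the constants $C_j$ may depend on $j$) is already more explicit than the source.
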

\begin{remark}\label{re3.441}
By using Plancherel, interpolation, the H\"older inequality and Proposition \ref{pr4.1636}, we have
\begin{align*}
  \limsup_{n\to \infty} \|e^{it\Delta_{\mathbb{R}^2\times \mathbb{T} }} w_n^J\|_{L_{t,x}^4 H_y^{1-\epsilon_0}}
  =   & \limsup_{n\to \infty} \|e^{it\Delta_{\mathbb{R}^2 }} w_n^J\|_{L_{t,x}^4 H_y^{1-\epsilon_0}}\\
\lesssim & \limsup_{n\to \infty} \|w_n^J\|_{L_x^2 H_y^{1-\frac{\epsilon_0}2}}^{\frac{2-2\epsilon_0}{2-\epsilon_0}}  \|e^{it\Delta_{\mathbb{R}^2 }} w_n^J\|_{L_{t,x,y}^4}^{\frac{\epsilon_0}{2-\epsilon_0}}  \to 0, \text{ as }  J\to \infty.
\end{align*}
\end{remark}

\subsection{Approximation of profiles}\label{sse3.2}
In this subsection, by using the solution of the resonant Schr\"odinger system in \cite{YZ} to approximate the nonlinear Schr\"odinger
equation with initial data be the bubble in the linear profile decomposition, we can show the nonlinear profile have a bounded space-time norm.
\begin{lemma}[Large-scale profiles ] \label{le5.7}
Let $\phi \in L_x^2 H_y^{ 1}(\mathbb{R}^2\times \mathbb{T})$ be given, then

(i) There is $\lambda_0 = \lambda_0(\phi)$ sufficiently large such that for $\lambda \ge \lambda_0$, there is a unique global solution
$U_\lambda \in C_t^0 L_x^2 H_{y}^{ 1} (\mathbb{R} \times \mathbb{R}^2\times \mathbb{T})$ of
\begin{align}\label{eq3.833}
\begin{cases}
i\partial_t U_\lambda + \Delta_{\mathbb{R}^2 \times \mathbb{T}} U_\lambda =   |U_\lambda|^2 U_\lambda,\\
U_\lambda(0,x,y) = \frac1\lambda   \phi\left(\frac{x}\lambda, y\right) .
\end{cases}
\end{align}
Moreover, for $\lambda \ge \lambda_0$,
\begin{align*}
\|U_\lambda\|_{L_t^\infty L_x^2 H_{y}^{ 1}\cap L_{t,x}^4 H_y^1(\mathbb{R} \times \mathbb{R}^2 \times \mathbb{T})}  \lesssim_{\|\phi\|_{L_x^2 H_y^1}} 1.
\end{align*}
(ii) Assume $\epsilon_1 $ is a sufficiently small positive constant depending only on $\|\phi\|_{L_x^2 H_y^1}, 
 \vec{v}_0 \in  H^2_x h^1$, and
\begin{align}\label{eq3.935}
\|\vec{\phi}- \vec{v}_0\|_{L^2_x h^1 } \le \epsilon_1.
\end{align}

Let $\vec{v} \in   C_t^0 H_x^1  h^1(\mathbb{R} \times \mathbb{R}^2 \times \mathbb{Z})$ denote the solution of
\begin{align}\label{eq3.4new}
\begin{cases}
i\partial_t v_j + \Delta_{\mathbb{R}^2} v_j =  \sum\limits_{(j_1,j_2,j_3)\in \mathcal{R}(j)} v_{j_1} \bar{v}_{j_2} v_{j_3},\\
v_j(0) = v_{0,j}, j\in \mathbb{Z}.
\end{cases}
\end{align}
For $\lambda \ge 1$, we define
\begin{align*}
v_{j,\lambda}(t,x) = \frac1\lambda v_j\left(\frac{t}{\lambda^2}, \frac{x}\lambda\right), \quad (t,x) \in \mathbb{R} \times \mathbb{R}^2 ,\\
V_\lambda(t,x,y) = \sum\limits_{j\in \mathbb{Z}} e^{-it|j|^2} e^{i\langle y, j\rangle} v_{j,\lambda}(t,x), \quad (t,x,y) \in  \mathbb{R} \times \mathbb{R}^2 \times \mathbb{T} .
\end{align*}
Then
\begin{align*}
\limsup\limits_{\lambda\to \infty} \|U_\lambda - V_\lambda\|_{L_t^\infty L_x^2 H_{y}^{ 1} \cap L_{t,x}^4 H_y^1(\mathbb{R} \times \mathbb{R}^2 \times \mathbb{T})} \lesssim_{\|\phi\|_{L_x^2 H_y^1}} \epsilon_1.
\end{align*}
\end{lemma}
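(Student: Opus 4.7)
The plan is to prove (i) and (ii) simultaneously by using $V_\lambda$ as an approximate solution to \eqref{eq3.833} and applying the stability theory of Theorem \ref{le2.6}. Since $\|\vec{\phi}-\vec{v}_0\|_{L_x^2 h^1}\le \epsilon_1$ with $\vec{\phi}$ equivalent to $\phi \in L_x^2 H_y^1$, Theorem \ref{th1.2} provides a unique global solution $\vec{v}$ of the resonant system \eqref{eq3.4new} with
\[
\|\vec{v}\|_{L_t^\infty L_x^2 h^1 \cap L_{t,x}^4 h^1(\mathbb{R}\times\mathbb{R}^2\times \mathbb{Z})} \le C(\|\phi\|_{L_x^2 H_y^1}).
\]
Because the $L_t^\infty L_x^2 h^1 \cap L_{t,x}^4 h^1$ norms are invariant under the rescaling $v_j\mapsto v_{j,\lambda}$, these bounds transfer to $V_\lambda$ uniformly in $\lambda\ge 1$, producing the space-time a priori bound needed in (i) once $U_\lambda$ is shown close to $V_\lambda$.

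Next, a direct computation using $(i\partial_t+\Delta_y)(e^{-it|j|^2}e^{i\langle y,j\rangle})=0$ together with \eqref{eq3.4new} yields
\[
i\partial_t V_\lambda + \Delta_{\mathbb{R}^2\times \mathbb{T}}V_\lambda - |V_\lambda|^2 V_\lambda = -\mathcal{N}_\lambda,
\]
where $\mathcal{N}_\lambda$ collects the non-resonant contributions of $|V_\lambda|^2 V_\lambda$,
\[
\mathcal{N}_\lambda(t,x,y)=\sum_{j\in \mathbb{Z}}e^{i\langle y,j\rangle}\sum_{\substack{j_1-j_2+j_3=j\\ |j_1|^2-|j_2|^2+|j_3|^2\ne |j|^2}}e^{-it(|j_1|^2-|j_2|^2+|j_3|^2)}v_{j_1,\lambda}\bar v_{j_2,\lambda}v_{j_3,\lambda}.
\]
At $t=0$, $U_\lambda(0)-V_\lambda(0)=\lambda^{-1}(\phi-\sum_j v_{0,j}e^{i\langle y,j\rangle})(x/\lambda,y)$, and scaling invariance gives $\|U_\lambda(0)-V_\lambda(0)\|_{L_x^2 H_y^{1-\epsilon_0}}=\|\vec{\phi}-\vec{v}_0\|_{L_x^2 h^{1-\epsilon_0}}\le \epsilon_1$, which by the Strichartz estimate \eqref{eq2.1} also controls the linear flow of the difference in $L_t^4 L_x^4 H_y^{1-\epsilon_0}$.

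The main technical step is to show $\|\mathcal{N}_\lambda\|_{L_t^{4/3}L_x^{4/3}H_y^{1-\epsilon_0}}\to 0$ as $\lambda\to\infty$. The change of variables $(\tau,\xi)=(t/\lambda^2,x/\lambda)$ preserves the $L_{t,x}^{4/3}$ norm and reduces matters to showing that
\[
\mathcal{E}_\lambda(\tau,\xi,y):=\sum_{j}e^{i\langle y,j\rangle}\sum_{\substack{j_1-j_2+j_3=j \\ \Omega\ne 0}}e^{-i\tau\lambda^2\Omega}v_{j_1}(\tau,\xi)\bar v_{j_2}(\tau,\xi)v_{j_3}(\tau,\xi)
\]
tends to $0$ in $L_{\tau,\xi}^{4/3}H_y^{1-\epsilon_0}$, with $\Omega=|j_1|^2-|j_2|^2+|j_3|^2-|j|^2$. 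The strategy is a density argument: exploiting the extra regularity $\vec{v}_0\in H_x^2 h^1$ propagated by the resonant flow, approximate $\vec{v}$ in $L_t^\infty L_x^2 h^1\cap L_{t,x}^4 h^1$ by $\vec{w}$ supported in a finite set of indices with each $w_j\in C_c^\infty(\mathbb{R}_\tau\times \mathbb{R}_\xi^2)$, so that the resulting tri-linear approximation error is absorbed using Proposition \ref{pr3.3}. For the smooth truncated $\vec{w}$, integration by parts in $\tau$ produces a factor $\lambda^{-2}|\Omega|^{-1}$ from the non-vanishing phase, and summing over the finite resonance set gives the desired decay.

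With both the error $\mathcal{N}_\lambda$ and the initial data difference small in the required norms, Theorem \ref{le2.6} applied with $M\sim\|\vec{v}\|_{L_t^\infty L_x^2 h^1}$, $M'\sim\epsilon_1$, and $L\sim\|\vec{v}\|_{L_{t,x}^4 h^1}$ produces the global solution $U_\lambda$ of \eqref{eq3.833} together with $\|U_\lambda-V_\lambda\|_{L_t^\infty L_x^2 H_y^{1-\epsilon_0}\cap L_{t,x}^4 H_y^{1-\epsilon_0}}\lesssim \epsilon_1$. The upgrade from $H_y^{1-\epsilon_0}$ to $H_y^1$ needed in both (i) and (ii) follows from the persistence-of-regularity remark immediately following Theorem \ref{le2.6}. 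The principal obstacle is the non-resonant decay step: the infinite summation over resonance indices $(j_1,j_2,j_3)$ must be balanced against the oscillatory gain, which forces us to invoke the stronger $H_x^2 h^1$ regularity of $\vec{v}_0$ together with the density approximation, because Proposition \ref{pr3.3} alone yields tri-linear control only in $H_y^{1-\epsilon_0}$.
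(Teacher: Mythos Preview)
Your overall strategy---build $V_\lambda$ from the resonant system, compute the non-resonant error, and close via the stability theorem---matches the paper's. The gap is in the step where you claim $\|\mathcal{N}_\lambda\|_{L_t^{4/3}L_x^{4/3}H_y^{1-\epsilon_0}}\to 0$. After your change of variables this becomes $\|\mathcal{E}_\lambda\|_{L_{\tau,\xi}^{4/3}H_y^{1-\epsilon_0}}\to 0$, and this is simply false: the phase $e^{-i\tau\lambda^2\Omega}$ has modulus one, so for any fixed $(\tau,\xi)$ the $H_y$ norm of $\mathcal{E}_\lambda$ is independent of $\lambda$, and hence so is its $L_{\tau,\xi}^{4/3}$ norm. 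Reducing to smooth, finitely supported $\vec w$ does not help---you still have $\|e^{-i\tau\lambda^2\Omega}F(\tau,\xi)\|_{L_{\tau,\xi}^{4/3}}=\|F\|_{L_{\tau,\xi}^{4/3}}$. There is no integral in $\tau$ to integrate by parts in at this stage.

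The paper resolves this by not attempting to make the error itself small. It splits $e_\lambda=P^x_{\ge 2^{-10}}e_\lambda+P^x_{\le 2^{-10}}e_\lambda$. For the high-frequency piece, Bernstein trades the projector for a $\nabla_x$, which under the rescaling produces a genuine factor $\lambda^{-1}$ and closes in $L_{t,x}^{4/3}H_y^1$ using the $H_x^1h^1$ bound on $\vec v$. For the low-frequency piece the paper works instead with the Duhamel term $\int_0^t e^{i(t-\tau)\Delta}P^x_{\le 2^{-10}}e_\lambda\,d\tau$ and integrates by parts in $\tau$ there (a normal-form step). This is where the oscillation actually helps: the combined phase is $e^{-i\tau(\Delta_{\mathbb{R}^2}+\Omega)}$, and because $|\Omega|\ge 1$ on $\mathcal{NR}(j)$ while $|\xi|^2\le 2^{-18}$ on the projector, the multiplier $(\Delta_{\mathbb{R}^2}+\Omega)^{-1}P^x_{\le 2^{-10}}$ is bounded. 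The resulting boundary and $\partial_\tau$ terms each carry $\lambda^{-2}$ and are controlled by the $H_x^2h^1$ regularity of $\vec v_0$. One then feeds smallness of the Duhamel integral (rather than of $e_\lambda$) into the stability theorem, which is the standard harmless modification. Your density argument and your invocation of $H_x^2h^1$ are in the right spirit, but the oscillatory gain must be harvested inside the Duhamel integral, together with the $x$-frequency localization that keeps $\Delta_{\mathbb{R}^2}+\Omega$ away from zero.
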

\begin{proof}
We will show $V_\lambda$ is an approximate solution to the cubic nonlinear Schr\"odinger equation on $\mathbb{R}^2 \times \mathbb{T}$ 
 in the sense of Theorem \ref{le2.6}.
By noting $v_j$ satisfies \eqref{eq3.4new} and an easy computation, we have
\begin{align}\label{eq3.933}
e_\lambda & = (i\partial_t + \Delta_{\mathbb{R}^2\times \mathbb{T}}) V_\lambda - |V_\lambda|^2 V_\lambda \notag \\
     & = - \sum\limits_{j\in \mathbb{Z}} e^{-it|j|^2} e^{i\langle y,j\rangle} \sum\limits_{ (j_1,j_2,j_3) \in \mathcal{NR}(j) } e^{-it(|j_1|^2 - |j_2|^2 + |j_3|^2 - |j|^2)} (v_{j_1,\lambda} \bar{v}_{j_2,\lambda} v_{j_3,\lambda})(t,x),
\end{align}
where
\begin{equation*}
 \mathcal{NR}(j) = \left\{ (j_1,j_2,j_3) \in \mathbb{Z}^3: j_1-j_2+j_3-j=0,|j_1|^2-|j_2|^2+|j_3|^2-|j|^2 \ne 0 \right\}.
\end{equation*}
We first decompose
$e_\lambda = P_{\ge 2^{-10}}^x e_\lambda + P_{\le 2^{-10}}^x e_\lambda$.
By Berstein's inequality, Plancherel theorem, \eqref{eq3.933}, Leibnitz's rule, and H\"older's inequality, we have
\begin{align}\label{eq3.1033}
& \|P_{\ge 2^{-10}}^x e_\lambda \|_{L_{t }^\frac43  L_x^{\frac43} H_y^1(\mathbb{R} \times \mathbb{R}^2 \times \mathbb{T})}\\
\lesssim & \|P_{\ge 2^{-10}}^x \nabla_x e_\lambda \|_{L_{t,x}^\frac43  H_y^1(\mathbb{R} \times \mathbb{R}^2 \times \mathbb{T})} \notag\\
\lesssim  & \frac1\lambda  \Bigg\| \bigg(\sum_{j\in \mathbb{Z}} \langle j\rangle^2 \bigg|\sum_{ { (j_1,j_2,j_3) \in \mathcal{NR}(j)}  }
e^{-i\lambda^2 t(|j_1|^2 - |j_2|^2 + |j_3|^2 - |j|^2)}   ( v_{j_1 } \cdot \overline{\nabla_x v}_{j_2 }\cdot v_{j_3 })(t,x) \bigg|^2\bigg)^\frac12 \Bigg\|_{L_{t,x}^\frac43}\notag\\
& + \frac1\lambda  \Bigg\| \bigg(\sum_{j\in \mathbb{Z}} \langle j\rangle^2 \bigg|\sum_{ {( j_1,j_2,j_3) \in \mathcal{NR}(j)}  }
e^{-i\lambda^2 t(|j_1|^2 - |j_2|^2 + |j_3|^2 - |j|^2)}   ( v_{j_1 } \cdot \overline{v}_{j_2 }  \cdot \nabla_x  v_{j_3 })(t,x) \bigg|^2\bigg)^\frac12 \Bigg\|_{L_{t,x}^\frac43}\notag\\
& +  \frac1\lambda  \Bigg\| \bigg(\sum_{j\in \mathbb{Z}} \langle j\rangle^2 \bigg|\sum_{ {(j_1,j_2,j_3) \in \mathcal{NR}(j)}  }
e^{-i\lambda^2 t(|j_1|^2 - |j_2|^2 + |j_3|^2 - |j|^2)}   (\nabla_x v_{j_1 } \cdot \overline{v}_{j_2 } \cdot v_{j_3 })(t,x) \bigg|^2\bigg)^\frac12 \Bigg\|_{L_{t,x}^\frac43}
\notag\\
\lesssim  & \frac1\lambda   \Bigg\| \bigg(\sum_{j} \langle j\rangle^2 |\nabla_x v_{j}|^2\bigg)^\frac12    \Bigg\|_{L_{t,x}^4}
\Bigg\| \bigg(\sum_{j} \langle j\rangle^2 |  v_{j}|^2\bigg)^\frac12    \Bigg\|_{L_{t,x}^4}^2 \notag
 \lesssim_{\|\phi\|_{L_x^2 H_y^1}}     \frac1\lambda \|\vec{v}_0 \|_{ H^1_x  h^1}^3.
\end{align}
Thus $P_{\ge 2^{-10}}^x e_\lambda$ is acceptable when $\lambda\ge \lambda_0$. We turn to the estimate of $P_{\le 2^{-10}}^x e_\lambda$. By integrating by parts, we have
\begin{align*}
&   \int_0^t e^{i(t-\tau)\Delta_{\mathbb{R}^2 \times \mathbb{T}}} P_{\le 2^{-10}}^x e_\lambda(\tau) \,\mathrm{d}\tau\\
= &  - \sum_{j\in \mathbb{Z}} \sum_{(j_1,j_2,j_3) \in \mathcal{NR}(j) }
\int_0^t e^{i(t -\tau) ( \Delta_{\mathbb{R}^2} + |j_1|^2 - |j_2|^2 + |j_3|^2 - |j|^2)} e^{-it (|j_1|^2 - |j_2|^2 + |j_3|^2)} e^{i\langle y,j\rangle}
P_{\le 2^{-10}}^x (v_{j_1,\lambda} \bar{v}_{j_2,\lambda} v_{j_3,\lambda})(\tau, x)\,\mathrm{d}\tau
\\
= & -   \sum_{j\in \mathbb{Z}} \sum_{(j_1,j_2,j_3) \in \mathcal{NR}(j)}
 e^{-i t (|j_1|^2 - |j_2|^2 + |j_3|^2)}e^{i\langle y,j\rangle} \frac{ie^{i t ( \Delta_{\mathbb{R}^2} + |j_1|^2 - |j_2|^2 + |j_3|^2 - |j|^2)}}{\Delta_{\mathbb{R}^2} + |j_1|^2 - |j_2|^2 + |j_3|^2 - |j|^2}
 P_{\le 2^{-10}}^x (v_{j_1,\lambda} \bar{v}_{j_2,\lambda} v_{j_3,\lambda})(0)
 \\
&  - \sum_{j\in \mathbb{Z}} \sum_{(j_1,j_2,j_3) \in \mathcal{NR}(j) } e^{-i t (|j_1|^2 - |j_2|^2 + |j_3|^2)}e^{i\langle y,j\rangle} \frac{i}{   \Delta_{\mathbb{R}^2} + |j_1|^2 - |j_2|^2 + |j_3|^2 - |j|^2}
    P_{\le 2^{-10}}^x (v_{j_1,\lambda} \bar{v}_{j_2,\lambda} v_{j_3,\lambda})( t )
\\ & + \sum_{j\in \mathbb{Z}} \sum_{(j_1,j_2,j_3) \in \mathcal{NR}(j)} e^{-i t (|j_1|^2 - |j_2|^2 + |j_3|^2)}e^{i\langle y,j\rangle} i \int_0^t  \frac{ie^{i( t -\tau)( \Delta_{\mathbb{R}^2} + |j_1|^2 - |j_2|^2 + |j_3|^2 - |j|^2)}}{\Delta_{\mathbb{R}^2} + |j_1|^2 - |j_2|^2 + |j_3|^2 - |j|^2} \partial_\tau  P_{\le 2^{-10}}^x (v_{j_1,\lambda} \bar{v}_{j_2,\lambda} v_{j_3,\lambda})(\tau) \, \mathrm{d}\tau
 \\
 =: &  A_1 + A_2 + A_3.
\end{align*}
For $A_1$, by Plancherel's theorem, the boundedness of the operator $\frac{P_{\le 2^{-10}}^x}{\Delta_{\mathbb{R}^2} + |j_1|^2-|j_2|^2 + |j_3|^2 -|j|^2}$ on $L_x^2(\mathbb{R}^2)$ when $(j_1,j_2,j_3)\in \mathcal{NR}(j)$, H\"older's inequality and the Sobolev embedding theorem, we have
\begin{align*}
 &  \Bigg\|\sum_{j\in \mathbb{Z}} \sum_{(j_1,j_2,j_3 ) \in \mathcal{NR}(j) } e^{-it (|j_1|^2 - |j_2|^2 + |j_3|^2)} e^{i\langle y,j\rangle} \frac{ie^{i t (\Delta_{\mathbb{R}^2} + |j_1|^2 - |j_2|^2 + |j_3|^2 - |j|^2)}}{ \Delta_{\mathbb{R}^2} + |j_1|^2 - |j_2|^2 + |j_3|^2 - |j|^2 } P_{\le 2^{-10}}^x (v_{j_1,\lambda} \bar{v}_{j_2,\lambda} v_{j_3,\lambda})(0,x )\Bigg\|_{L_t^\infty L_x^2 H_{y}^{1}
 \cap L_{t,x}^4 H_y^1}\\
\sim &  \Bigg\| \Bigg(\sum_{j\in \mathbb{Z}} \langle j\rangle^2 \Bigg | \sum_{ (j_1,j_2,j_3 ) \in \mathcal{NR}(j) }    \frac{ P_{\le 2^{-10}}^x (v_{j_1,\lambda} \bar{v}_{j_2,\lambda} v_{j_3,\lambda})(0,x )}{ \Delta_{\mathbb{R}^2} + |j_1|^2 - |j_2|^2 + |j_3|^2 - |j|^2 }\Bigg |^2\Bigg)^\frac12 \Bigg\|_{L_x^2} \\
\lesssim &  \bigg\| \Big(\sum_{j\in \mathbb{Z}} \langle j\rangle^2 |  v_{j,\lambda}(0,x)|^2\Big)^\frac12  \bigg\|_{L_x^6}^3
\lesssim  
\frac1{\lambda^2} \|\vec{v}_0\|_{ H^1_x h^1}^3.
\end{align*}
We can estimate $A_2$ similarly to $A_1$:
\begin{align*}
&\Bigg\|\sum_{j\in \mathbb{Z}} \sum_{ (j_1,j_2,j_3 ) \in \mathcal{NR}(j) } e^{-it (|j_1|^2 - |j_2|^2 + |j_3|^2)} e^{i\langle y,j\rangle }
\frac{i}{\Delta_{\mathbb{R}^2} + |j_1|^2- |j_2|^2 + |j_3|^2- |j|^2} P_{\le 2^{-10}}^x (v_{j_1,\lambda}\bar{v}_{j_2,\lambda} v_{j_3,\lambda})
(t ,x)\Bigg\|_{L_t^\infty L_x^2 H_{y}^{1}\cap L_{t,x}^4 H_y^1}\\
\lesssim &  \frac1{\lambda^2} \left\|\vec{v}\left(\frac{t }{\lambda^2}\right)\right\|_{L_t^\infty H^1_x h^1 }^3 \lesssim \frac1{\lambda^2} \|\vec{v}_0\|_{  H^1_x h^1 }^3.
\end{align*}
We now consider $A_3$. By H\"older's inequality, Leibnitz's rule and \eqref{eq3.4new}, we have
\begin{align}\label{eq3.1133}
& \Bigg \|\sum_{j\in \mathbb{Z}} \sum_{ (j_1,j_2,j_3) \in \mathcal{NR}(j) } e^{-i t (|j_1|^2 - |j_2|^2 + |j_3|^2)} e^{i\langle y,j\rangle}
\int_0^t  \frac{e^{i(t -\tau)(\Delta_{\mathbb{R}^2} + |j_1|^2 - |j_2|^2 + |j_3|^2 - |j|^2)}}{\Delta_{\mathbb{R}^2 } + |j_1|^2 - |j_2|^2 + |j_3|^2 - |j|^2}
\partial_\tau P_{\le 2^{-10}}^x (v_{j_1,\lambda} \bar{v}_{j_2,\lambda} v_{j_3,\lambda})  \,\mathrm{d}\tau \Bigg\|_{L_t^\infty L_x^2 H_{y}^{1}} \notag\\
& \ +  \Bigg \|\sum_{j\in \mathbb{Z}} \sum_{ (j_1,j_2,j_3) \in \mathcal{NR}(j) } e^{-i t (|j_1|^2 - |j_2|^2 + |j_3|^2)} e^{i\langle y,j\rangle}
\int_0^t  \frac{e^{i(t -\tau)(\Delta_{\mathbb{R}^2} + |j_1|^2 - |j_2|^2 + |j_3|^2 - |j|^2)}}{\Delta_{\mathbb{R}^2 } + |j_1|^2 - |j_2|^2 + |j_3|^2 - |j|^2}
\partial_\tau P_{\le 2^{-10}}^x (v_{j_1,\lambda} \bar{v}_{j_2,\lambda} v_{j_3,\lambda})  \,\mathrm{d}\tau \Bigg\|_{L_{t,x}^4 H_{y}^{1}} \notag\\
\lesssim &  \left\| \bigg\| \Big(\sum_{j\in \mathbb{Z}} \langle j\rangle^2 | \partial_t v_{j,\lambda}(t,x) |^2 \Big)^\frac12 \bigg\|_{L_x^6}
\bigg\|\Big(\sum_{j\in \mathbb{Z}} \langle j\rangle^2  |v_{j,\lambda}(t,x)|^2  \Big)^\frac12   \bigg\|_{L_x^6}^2\right \|_{L_t^1} \notag\\
\lesssim & 
 \lambda^{-2} \left\|   \bigg(\sum_{j\in \mathbb{Z}} \langle j\rangle^2\Big |\Big(  -\Delta_{\mathbb{R}^2} v_j + \sum_{(j_1,j_2,j_3)\in \mathcal{R}(j)} v_{j_1} \bar{v}_{j_2} v_{j_3}\Big)(t,x)\Big|^2 \bigg)^\frac12  \right\|_{L_t^3 L_x^6}
\left\|  \Big(\sum_{j\in \mathbb{Z}} \langle j\rangle^2 |v_j(t,x)|^2 \Big)^\frac12  \right\|_{L_t^3L_x^6}^2 \notag \\
=: &  \lambda^{-2} \cdot  I \cdot II .
\end{align}
By H\"older's inequality, Sobolev's inequality, together with the scattering of the cubic resonant Schr\"odinger system and persistence of regularity
in \cite{YZ}, 
 we have
\begin{align}\label{eq3.1333}
I \lesssim & \left\|    \Big(\sum_{j\in \mathbb{Z}} \langle j\rangle^2 |\Delta_{\mathbb{R}^2} v_j(t,x)|^2 \Big)^\frac12    \right\|_{L_t^3 L_x^6}
+  \left\|   \bigg(\sum_{j\in \mathbb{Z}} \langle j\rangle^2 \Big| \sum_{(j_1,j_2,j_3)\in \mathcal{R}(j)}(  v_{j_1} \bar{v}_{j_2} v_{j_3})(t,x) \Big|^2 \bigg)^\frac12 \right\|_{L_t^3 L_x^6} \notag\\
\lesssim & \|\vec{v}_0\|_{ H^2_x h^1}+
\left\|  \left(\sum_j \langle j \rangle^2 | v_j(t,x)|^2  \right)^\frac12  \right\|_{L_t^3L_x^{18}}
\left\| \Big(\sum_j \langle j \rangle^2 |v_j(t,x)|^2 \Big)^\frac12 \right\|_{L_t^\infty L_x^{18}}^2\notag\\
\lesssim & \|\vec{v}_0\|_{ H^2_x h^1} + \|\vec{v}_0\|_{H^2_x h^1 }^3.
\end{align}
By the scattering result in \cite{YZ}, we also have
\begin{equation}\label{eq3.1233}
II \lesssim  \|\phi\|_{L_x^2 H_y^1}^2. 
\end{equation}
So by \eqref{eq3.1133}, \eqref{eq3.1333} and \eqref{eq3.1233}, we have
\begin{align*}
&\  \Bigg \|\sum_{j\in \mathbb{Z}} \sum_{( j_1,j_2,j_3 ) \in \mathcal{NR}(j) } e^{-i t (|j_1|^2 - |j_2|^2 + |j_3|^2)} e^{i\langle y,j\rangle}
\int_0^t  \frac{e^{i(t -\tau)(\Delta_{\mathbb{R}^2} + |j_1|^2 - |j_2|^2 + |j_3|^2 - |j|^2)}}{\Delta_{\mathbb{R}^2 } + |j_1|^2 - |j_2|^2 + |j_3|^2 - |j|^2}
\partial_\tau P_{\le 2^{-10}}^x (v_{j_1,\lambda} \bar{v}_{j_2,\lambda} v_{j_3,\lambda})  \,\mathrm{d}\tau \Bigg\|_{L_t^\infty L_x^2 H_{ y}^{ 1} }\\
&\ +   \Bigg \|\sum_{j\in \mathbb{Z}} \sum_{( j_1,j_2,j_3 ) \in \mathcal{NR}(j) } e^{-i t (|j_1|^2 - |j_2|^2 + |j_3|^2)} e^{i\langle y,j\rangle}
\int_0^t  \frac{e^{i(t -\tau)(\Delta_{\mathbb{R}^2} + |j_1|^2 - |j_2|^2 + |j_3|^2 - |j|^2)}}{\Delta_{\mathbb{R}^2 } + |j_1|^2 - |j_2|^2 + |j_3|^2 - |j|^2}
\partial_\tau P_{\le 2^{-10}}^x (v_{j_1,\lambda} \bar{v}_{j_2,\lambda} v_{j_3,\lambda})  \,\mathrm{d}\tau \Bigg\|_{  L_{t,x}^4 H_y^1}\\
&  \lesssim_{\|\phi\|_{L_x^2 H_y^1} }   \lambda^{-2}\left(\|\vec{v}_0\|_{ H^2_x h^1} + \|\vec{v}_0\|_{H^2_x h^1 }^3\right).
\end{align*}
So $P_{\le 2^{-10}}^x e_\lambda$ is acceptable when $\lambda\ge \lambda_0$. Therefore, for $\lambda\ge \lambda_0$, $\int_0^t e^{i(t-\tau)\Delta_{\mathbb{R}^2
\times \mathbb{T}}} e_\lambda(\tau) \, \mathrm{d}\tau$ is small enough in $L_{t,x}^4 H_y^1 \cap L_t^\infty L_x^2 H_y^1(\mathbb{R}\times \mathbb{R}^2\times \mathbb{T})$.

We still need to verify the easier assumptions of Theorem \ref{le2.6}.

By Plancherel, \eqref{eq3.4new}, we have
\begin{align*}
 \|V_\lambda\|_{ L_t^\infty L_x^2  H_{ y}^{ 1}(\mathbb{R}\times  \mathbb{R}^2\times \mathbb{T} )}
\lesssim & \left( \sum_{j\in \mathbb{Z}} \langle j \rangle^2 \bigg(\|v_j(0,x)\|_{L_x^2}^2
+ \bigg\|\sum_{\substack{j_1-j_2+ j_3 =j,\\ |j_1|^2 - |j_2|^2 + |j_3|^2  = |j|^2}} v_{j_1} \overline{v_{j_2}} v_{j_3} \bigg\|_{L_{t,x}^\frac43 }^2\bigg) \right )^\frac12
\\
\lesssim  & \|\vec{v}_0\|_{ L^2_x h^1}
+  \left(\sum_j \langle j \rangle^2 \|v_j\|_{L_{t,x}^4}^2 \right)^\frac32
\lesssim 
  \|\vec{v}_0\|_{ L^2_x h^1}
+ \|\vec{v}_0\|_{ L^2_x h^1}^3,
\end{align*}
and
\begin{align*}
 \|V_\lambda\|_{L_t^4 L_x^4 H_y^1(\mathbb{R} \times \mathbb{R}^2 \times \mathbb{T})}
\lesssim & 
\left\|\sum_{j\in \mathbb{Z}} e^{i\langle y,j\rangle} v_j(0,x)\right\|_{L_x^2 H_y^1}
+ \left\| \sum_{j\in \mathbb{Z}} e^{-it|j|^2} e^{i\langle y,j\rangle} (i\partial_t + \Delta_{\mathbb{R}^2})v_j(t,x)\right\|_{L_t^\frac43 L_x^\frac43 H_y^1}\\
\sim & \left(\sum\limits_{j\in \mathbb{Z}} \langle j\rangle^2 \|v_j(0,x)\|_{L_x^2}^2\right)^\frac12
 +  \left\| \left(\sum_{j\in \mathbb{Z}} \langle j\rangle^2 |(i\partial_t + \Delta_{\mathbb{R}^2})v_j(t,x)|^2 \right)^\frac12\right\|_{L_{t,x}^\frac43}\\
\lesssim &
\|\vec{v}_0\|_{ L^2_x h^1} +
\left\| \Big(\sum_{j} \langle j \rangle^2 |v_{j}(t,x)|^2\Big)^\frac12 \right\|_{L_{t,x}^4}^3
\lesssim \|\vec{v}_0\|_{ L^2_x h^1} + \|\vec{v}_0\|_{ L^2_x h^1}^3.
\end{align*}
Moreover, by Plancherel, \eqref{eq3.935}, we have
\begin{align*}
 \|U_\lambda(0) - V_\lambda(0)\|_{L_x^2 H_{y}^1}
=
 \|\vec{\phi} - \vec{v}_{0}\|_{L_x^2 h^1}
\lesssim \epsilon_1.
\end{align*}
Applying Theorem \ref{le2.6}, we conclude that for $\lambda$ (depending on $\vec{v}_0$) large enough, the solution $U_\lambda$ of \eqref{eq3.833}
exists globally 
 and
\begin{align*}
\|U_\lambda- V_\lambda\|_{L_t^\infty L_x^2 H_{y}^1 \cap L_{t,x}^4 H_y^1(\mathbb{R} \times \mathbb{R}^2 \times \mathbb{T})} \lesssim \epsilon_1,
\end{align*}
which ends the proof.
\end{proof}
After spatial translation, time translation, and Galilean transformation, we have
\begin{proposition}\label{pr5.9}
For any $\phi\in L_x^2 H_y^{ 1}(\mathbb{R}^2 \times \mathbb{T})$,
$(\lambda_n, t_n, x_n,\xi_n )_{n\ge 1} \subseteq (0,\infty) \times \mathbb{R} \times \mathbb{R}^2 \times \mathbb{R}^2$, $\lambda_n\to \infty$, as $n\to \infty$, $|\xi_n| \lesssim 1$, and let
\begin{align*}
u_n(0,x,y) = \frac1{\lambda_n}  e^{ix \xi_n} (e^{it_n \Delta_{\mathbb{R}^2  }} \phi)\left(\frac{x-x_n}{\lambda_n},y\right).
\end{align*}

(i) For $n$ large enough, there is a solution $u_n \in C_t^0 L_x^2 H_{y}^{ 1} \cap L_{t,x}^4 H_y^1(\mathbb{R} \times \mathbb{R}^2\times \mathbb{T})$
of \eqref{eq1.1}, satisfying
$\|U_k\|_{L_t^\infty L_x^2 H_{y}^{ 1}(\mathbb{R}\times \mathbb{R}^2 \times \mathbb{T})} \lesssim_{\|\psi\|_{L_x^2 H_y^1}} 1$.

(ii) There exists a solution $\vec{v} \in C_t^0 L^2_x h^1 (\mathbb{R} \times \mathbb{R}^2\times \mathbb{Z} )$ of the resonant Schr\"odinger system
 such that: 
for any $\epsilon > 0$, 
it holds that
\begin{align*}
\|u_n(t) - W_n(t)\|_{L_t^\infty L_{x }^2 H_y^1 \cap L_{t,x}^4 H_y^1(\mathbb{R}\times \mathbb{R}^2 \times \mathbb{T}) } \le \epsilon,\\
\|u_n\|_{L_t^\infty L_{x }^2 H_y^1 \cap L_{t,x}^4 H_y^1(\mathbb{R}\times \mathbb{R}^2 \times \mathbb{T}) } \lesssim 1.
\end{align*}
for $n$ large enough, where
\begin{align*}
W_n(t, x,y) =   e^{-it_n  \Delta_{\mathbb{T}}}¡¡V_{\lambda_n}(t-t_n, x-x_n, y).
\end{align*}
\end{proposition}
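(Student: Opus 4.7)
The plan is to deduce Proposition \ref{pr5.9} from Lemma \ref{le5.7} by exploiting the symmetries of cubic NLS on $\mathbb{R}^2\times\mathbb{T}$: spatial translation in $x$, time translation, and Galilean boost in $x$. Each of these leaves the equation invariant and preserves all relevant norms, in particular $L_t^\infty L_x^2 H_y^1$, $L_{t,x}^4 H_y^1$, and the error space $L_t^{4/3} L_x^{4/3} H_y^{1-\epsilon_0}$ of the stability theorem (Theorem \ref{le2.6}). First I would normalize by setting $\widetilde u_n(t,x,y) := e^{-i(x+x_n)\xi_n - it|\xi_n|^2} u_n(t, x + x_n + 2t\xi_n, y)$, reducing the problem to constructing an NLS solution with data $\widetilde u_n(0,x,y)$ equal (up to a constant phase) to $\lambda_n^{-1}(e^{it_n\Delta_{\mathbb{R}^2}}\phi)(x/\lambda_n, y)$, which by the scaling identity equals $(e^{i\lambda_n^2 t_n \Delta_{\mathbb{R}^2}}\phi_{\lambda_n})(x,y)$ with $\phi_{\lambda_n}(x,y) = \lambda_n^{-1}\phi(x/\lambda_n, y)$.

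Given $\epsilon>0$ I would pick $\vec v_0 \in H^2_x h^1$ with $\|\vec\phi - \vec v_0\|_{L^2_x h^1} \le \epsilon_1$ for $\epsilon_1 = \epsilon_1(\|\phi\|_{L^2_x H^1_y},\epsilon)$ small, let $\vec v$ be the global resonant-system solution provided by Theorem \ref{th1.2} (with persistence of regularity from \cite{YZ}), and form $V_{\lambda_n}$ as in Lemma \ref{le5.7}. I would then take the approximant $\widetilde W_n(t,x,y) := e^{-it_n\Delta_{\mathbb{T}}} V_{\lambda_n}(t-t_n, x, y)$, which after undoing the Galilean boost and $x$-translation produces the $W_n$ of the statement. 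To close the argument I verify the two hypotheses of Theorem \ref{le2.6} for the pair $(\widetilde u_n, \widetilde W_n)$: at $t=0$, the data mismatch is controlled by $\epsilon_1 + \|\vec v(-t_n/\lambda_n^2) - \vec v(0)\|_{L^2_x h^1}$, whose last term is $o_n(1)$ since $t_n/\lambda_n^2 \to 0$ and $\vec v\in C_t^0 L^2_x h^1$ by Theorem \ref{th1.2}; for the equation error $\widetilde e_n := (i\partial_t + \Delta_{\mathbb{R}^2\times\mathbb{T}})\widetilde W_n - |\widetilde W_n|^2\widetilde W_n$ I would recycle the two-step decomposition used in the proof of Lemma \ref{le5.7}, namely a direct H\"older/Leibniz estimate for $P^x_{\ge 2^{-10}}\widetilde e_n$ giving $O(\lambda_n^{-1})$, and a Duhamel plus integration-by-parts in $t$ argument for $P^x_{\le 2^{-10}}\widetilde e_n$ exploiting the non-resonance to invert the phase and yielding $O(\lambda_n^{-2})$. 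Stability then produces $\widetilde u_n$ with the $L^2_x H^{1-\epsilon_0}_y$ bounds for $n$ large, and the persistence-of-regularity remark upgrades the output to $L^2_x H^1_y \cap L_{t,x}^4 H_y^1$, after which undoing the symmetries yields (i) and (ii).

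The main obstacle is the last step, since $e^{-it_n\Delta_{\mathbb{T}}}$ does \emph{not} commute with the cubic nonlinearity and therefore $\widetilde W_n$ is not obtained from $V_{\lambda_n}$ by a genuine NLS symmetry. The key observation is that, after expanding both $|\widetilde W_n|^2\widetilde W_n$ and $e^{-it_n\Delta_{\mathbb{T}}}[|V_{\lambda_n}|^2 V_{\lambda_n}(t-t_n)]$ in $y$-Fourier series, their difference is a sum over \emph{non-resonant} triples $(j_1,j_2,j_3)$ of cubic interactions $v_{j_1,\lambda_n}\bar v_{j_2,\lambda_n}v_{j_3,\lambda_n}(t-t_n,x)$ weighted by an extra oscillating factor $e^{it_n[|j|^2 - (|j_1|^2 - |j_2|^2 + |j_3|^2)]}$; this factor merges with the oscillating phase already present in $V_{\lambda_n}$ and is dispatched by exactly the same non-resonance integration-by-parts appearing in \eqref{eq3.933}--\eqref{eq3.1333} for Lemma \ref{le5.7}, with only cosmetic modifications.
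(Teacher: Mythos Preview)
Your overall strategy---reduce to $x_n=0$, $\xi_n=0$ via spatial translation and Galilean boost (both genuine symmetries of \eqref{eq1.1} preserving all the relevant norms), then rerun the stability argument of Lemma~\ref{le5.7} with the extra torus phase $e^{-it_n\Delta_{\mathbb{T}}}$---is exactly what the paper does, and your identification of the main obstacle (that $e^{-it_n\Delta_{\mathbb{T}}}$ does not commute with the cubic nonlinearity, the discrepancy being a purely non-resonant sum that the integration-by-parts argument of \eqref{eq3.933}--\eqref{eq3.1333} still dispatches) is correct and well explained.

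There is, however, one genuine gap. You assert that the initial-data mismatch contains a term $\|\vec v(-t_n/\lambda_n^2) - \vec v(0)\|_{L^2_x h^1}$ which is $o_n(1)$ ``since $t_n/\lambda_n^2\to 0$.'' Nothing in the hypotheses of Proposition~\ref{pr5.9} forces this: the sequence $t_n$ is unconstrained, and after passing to a subsequence the rescaled time may converge to a finite limit \emph{or} diverge to $\pm\infty$. In the divergent case your choice of $\vec v$ (the resonant-system solution with Cauchy data $\vec v_0$ at time $0$) gives no control on the mismatch. The fix---and this is precisely what the reference to \cite{HP} in the paper's proof is covering---is to let the choice of $\vec v$ depend on the behavior of $t_n$: if the rescaled time diverges to $+\infty$ (resp.\ $-\infty$), take $\vec v$ to be the resonant-system solution whose \emph{scattering state} at $+\infty$ (resp.\ $-\infty$) is $\vec\phi$, using the existence of wave operators contained in Theorem~\ref{th1.2}. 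Then the data mismatch at $t=0$ is bounded by $\epsilon_1$ plus
\[
\big\|\vec v(s_n) - e^{is_n\Delta_{\mathbb{R}^2}}\vec\phi\,\big\|_{L^2_x h^1},\qquad s_n\to\pm\infty,
\]
which tends to zero by the scattering statement rather than by continuity in time. With this correction the rest of your argument goes through unchanged.
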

\begin{proof}
Without loss of generality, we may assume that $x_n = 0$.
Using a Galilean transform and the fact that $\xi_n$ is bounded, we may assume that $\xi_n = 0$ for
all $n$. Similar to the argument of \cite{HP}, 
we obtain the result.
\end{proof}

\section{Existence of an almost-periodic solution} \label{se4}
In this section, we will show the existence in $L_x^2 H^{1}_{y}(\mathbb{R}^2 \times \mathbb{T})$ of an almost-periodic solution
by the profile decomposition.
By Theorem \ref{th2.946}, to prove the scattering of the solution of \eqref{eq1.1},
we only need to show the finiteness of the space-time norm $\|\cdot \|_{L_{t,x}^4 H_y^{1-\epsilon_0}}$ of the solution $u$ of \eqref{eq1.1}.
Define
\begin{equation*}
\Lambda(L) = \sup  \, \|  u\|_{L_t^4 L_x^{4} H_y^{1-\epsilon_0}  (\mathbb{R} \times \mathbb{R}^2\times \mathbb{T})},
\end{equation*}
where the supremum is taken over all global solutions $u \in C_t^0 H_{x,y}^{ 1}$ of \eqref{eq1.1} obeying $\sup\limits_{t\in \mathbb{R} } \|u(t)\|_{L_x^2H_y^{ 1}}^2 \le L$.

By the local wellposedness theory, $\Lambda(L)< \infty$ for $L$ sufficiently small.
In addition, define $L_{max} = \sup \{ L: \Lambda(L) < \infty\}$.
Our goal is to prove $L_{max}  = \infty$. Suppose to the contradiction $L_{max} < \infty$, we will show a Palais-Smale type theorem.
\begin{proposition}[Palais-Smale condition modulo symmetries in $L_x^2 H_y^{ 1}(\mathbb{R}^2 \times \mathbb{T})$] \label{pr7.1}
Assume that $L_{max} < \infty$. Let $\{t_n\}_n$ be arbitrary sequence of real numbers and $\{u_n\}$ be a sequence of solutions to
\eqref{eq1.1} satisfying
\begin{align}
 & \sup_{t\in (-\infty,\infty)} \|u_n(t)\|_{L_x^2 H_y^{ 1}}^2  \to L_{max},\\
 & \|u_n\|_{L_{t,x}^4 H_y^{1-\epsilon_0} ((-\infty,t_n) \times \mathbb{R}^2\times \mathbb{T})} \to \infty,
\|u_n\|_{ L_{t,x}^4 H_y^{1-\epsilon_0}((t_n,\infty) \times \mathbb{R}^2\times \mathbb{T})} \to \infty, \text{ as } n \to \infty, \label{eq3.13n0}
\end{align}
and such that $u_n\in C_t^0 H_{x,y}^{1}((-\infty,\infty)\times \mathbb{R}^2 \times \mathbb{T})$. Then, after passing to a subsequence, there exists a sequence
$x_n \in \mathbb{R}^2$ and $w\in H^{ 1}(\mathbb{R}^2\times \mathbb{T})$ such that
\begin{equation*}
  u_n  (x +  x_n, y, t_n)    \to w(x,y) \text{ in } L_x^2 H_y^{ 1}(\mathbb{R}^2 \times \mathbb{T}), \text{ as } n\to \infty.
\end{equation*}
\end{proposition}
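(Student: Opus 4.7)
The plan is to apply the $H^1$ linear profile decomposition (Proposition \ref{pro3.9v23}) to the bounded sequence $\{u_n(t_n)\} \subset H^1(\mathbb{R}^2 \times \mathbb{T})$. After extracting a subsequence I may assume $\lambda_n^j \to 1$ or $\infty$, $\xi_n^j \to \xi_*^j$, and $(\lambda_n^j)^2 t_n^j$ converges to some $s_*^j \in [-\infty, \infty]$ for each $j$. Absorbing the finite limits $e^{ix \xi_*^j}$, $e^{is_*^j \Delta_{\mathbb{R}^2}}$, and bounded components of $x_n^j$ into the profiles $\phi^j$, I reduce to frames where either $t_n^j\equiv 0$ and $\xi_n^j \equiv 0$, or $t_n^j \to \pm \infty$.

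For each profile I construct an associated nonlinear profile $U_n^j$, namely the global solution of \eqref{eq1.1} launched from the $j$th bubble. In the large-scale regime $\lambda_n^j \to \infty$, Lemma \ref{le5.7} together with Proposition \ref{pr5.9} furnishes $U_n^j$ and approximates it by the rescaled evolution of the cubic resonant system \eqref{eq1.733}, whose global wellposedness and scattering are supplied by Theorem \ref{th1.2}. In the bounded-scale regime $\lambda_n^j \equiv 1$, when $t_n^j \to \pm \infty$ I build $U_n^j$ via the wave operator coming from the free evolution plus Theorem \ref{th2.4}, and when $t_n^j \equiv 0$ directly as the NLS solution with data $\phi^j$.

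Next I claim $J^* = 1$, $\|\phi^1\|_{L_x^2 H_y^1}^2 = L_{\max}$, and $r_n^1 \to 0$. Indeed, the asymptotic Pythagorean identity \eqref{eq5.1} gives $\sum_j \|\phi^j\|_{L_x^2 H_y^1}^2 + \lim_n \|r_n^J\|_{L_x^2 H_y^1}^2 \le L_{\max}$. Should a second profile carry nontrivial norm, or should a single profile satisfy $\|\phi^1\|_{L_x^2 H_y^1}^2 < L_{\max}$, each $U_n^j$ has an $L_t^4 L_x^4 H_y^{1-\epsilon_0}$ norm bounded uniformly in $n$ --- either by minimality of $L_{\max}$ for bounded-scale profiles, or by Theorem \ref{th1.2} combined with Proposition \ref{pr5.9} for large-scale profiles. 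Superposing the nonlinear profiles together with $e^{it\Delta_{\mathbb{R}^2\times \mathbb{T}}} r_n^J$ produces an approximate solution whose error is small thanks to the almost-orthogonality \eqref{eq3.141} and to \eqref{eq5.3}. The stability Theorem \ref{le2.6} then yields a uniform bound on $\|u_n\|_{L_{t,x}^4 H_y^{1-\epsilon_0}}$, contradicting \eqref{eq3.13n0}.

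With the unique critical profile in hand, the large-scale option is ruled out: if $\lambda_n^1 \to \infty$, Proposition \ref{pr5.9} would itself deliver a uniform scattering bound on $u_n$, again violating \eqref{eq3.13n0}. The two-sided divergence in \eqref{eq3.13n0} also excludes $t_n^1 \to \pm \infty$, since in that scenario $U_n^1$ has small scattering norm on one of the half-lines $(t_n,\infty)$ or $(-\infty,t_n)$. Therefore $\lambda_n^1 \equiv 1$, $t_n^1$ converges along a subsequence to a finite value that I absorb into $\phi^1$, and similarly for $\xi_n^1$; setting $w := \phi^1$ and $x_n := x_n^1$ gives the strong $L_x^2 H_y^1$ convergence. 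The step I expect to be most delicate is producing an approximate solution from profiles living in heterogeneous regimes (bounded versus large scale, with various time shifts and Galilean boosts): this requires the orthogonality \eqref{eq3.141} to force cross-terms to vanish, combined with the global-in-time scattering for the resonant system (Theorem \ref{th1.2}) to control each large-scale piece uniformly in $n$.
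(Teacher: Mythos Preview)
Your proposal is correct and follows essentially the same strategy as the paper's proof: apply the linear profile decomposition of Proposition~\ref{pro3.9v23} to $u_n(t_n)$, use the minimality of $L_{\max}$ (for bounded-scale profiles) and Proposition~\ref{pr5.9}/Theorem~\ref{th1.2} (for large-scale profiles) to bound each nonlinear profile, invoke orthogonality and the stability Theorem~\ref{le2.6} to rule out more than one profile or a sub-critical single profile, and finally exclude $\lambda_n^1\to\infty$ and $t_n^1\to\pm\infty$ exactly as you describe. The paper organizes this into an explicit Case~1/Case~2 dichotomy according to whether $\sup_j\limsup_n\|\phi^j\|_{L_x^2H_y^1}^2$ equals $L_{\max}$ or is strictly smaller, but the logical content is the same as yours.
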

\begin{proof}
By replacing $u_n(t)$ with $u_n(t+t_n)$, we may assume $t_n = 0$. Applying Proposition \ref{sse3.1} to $\{u_n(0)\}_n$, after passing to a subsequence, we have
\begin{align*}
u_n(0,x,y) &  = \sum\limits_{j=1}^J  \frac1{\lambda_n^j}   e^{ix\xi_n^j}   \left(e^{it_n^j \Delta_{\mathbb{R}^2  }}   \phi^j\right)
\left(\frac{x-x_n^j}{\lambda_n^j},y\right)   + w_n^J(x,y).
\end{align*}
The remainder has asymptotically trivial linear evolution
\begin{equation}\label{eq6.1}
\limsup\limits_{n\to \infty} \|e^{it\Delta_{\mathbb{R}^2 \times \mathbb{T}  }}w_n^J \|_{ L_t^4 L_x^4 H_y^{1-\epsilon_0}} \to 0, \text{ as } J \to \infty,
\end{equation}
and we also have asymptotic decoupling of the $L_x^2 H_y^1$ norm:
\begin{equation}\label{eq6.2}
\lim\limits_{n\to \infty} \left(\|u_n(0)\|_{L_x^2 H_y^{1}}^2 - \sum\limits_{j=1}^J \|\phi^j\|_{L_x^2 H_y^{1}}^2 - \|w_n^J\|_{L_x^2H_y^{1}}^2 \right) = 0,\  \forall\, J.
\end{equation}
There are two possibilities:

{\bf Case 1.}  $\sup\limits_{j} \limsup\limits_{n\to \infty} \|\phi^j\|_{L_x^2 H_y^{ 1}}^2 = L_{max}$.
Combining \eqref{eq6.2} with the fact that $\phi^j$ are nontrivial in $L_x^2 H_y^{1}$, we deduce that $u_n(0,x,y) =  \frac1{\lambda_n} e^{ix\xi_n}    \left(e^{it_n\Delta_{  \mathbb{R}^2 }}   \phi\right)\left(\frac{x-x_n}{\lambda_n},y\right)   + w_n(x,y) $, $\lim\limits_{n\to \infty} \|w_n\|_{L_x^2 H_y^{ 1}} = 0$.
We will show that $\lambda_n \equiv 1$, otherwise $\lambda_n \to  \infty$.

Proposition \ref{pr5.9} 
 implies that for all large $n$, there exists a unique solution $u_n$ on $\mathbb{R}$ with $u_n(0,x,y) = \frac1{\lambda_n}  e^{ix\xi_n}      (e^{it_n \Delta_{\mathbb{R}^2 }}   \phi)(\frac{x-x_n}{\lambda_n}, y)$
and
\begin{align*}
\limsup\limits_{n\to \infty} \|u_n\|_{ L_t^4 L_x^4 H_y^{1-\epsilon_0} (\mathbb{R} \times \mathbb{R}^2 \times \mathbb{T})} \le C(L_{max}),
\end{align*}
which is a contradiction with \eqref{eq3.13n0}.

Therefore, $\lambda_n \equiv 1$, and $u_n(0,x,y) =   e^{ix\xi_n}     \left(e^{it_n \Delta_{\mathbb{R}^2 } }  \phi\right)(x-x_n,y)   + w_n(x,y)$.
If $t_n \equiv 0$, by the fact $\xi_n $ is bounded, this is precisely the conclusion.
If $t_n \to -\infty$, by the Galilean transform
\begin{align*}
e^{it_0 \Delta_{\mathbb{R}^2}} e^{ix\xi_0} \phi(x) = e^{-it_0 |\xi_0|^2} e^{ix\xi_0} (e^{it_0 \Delta_{\mathbb{R}^2}} \phi)(x-2t_0 \xi_0),
\end{align*}
we observe
\begin{align*}
&\left \|e^{it\Delta_{\mathbb{R}^2\times \mathbb{T}}}   \left( e^{ix\xi_n}   ( e^{it_n\Delta_{\mathbb{R}^2  }}  \phi)(x-x_n,y) \right)  \right\|_{ L_{t,x}^4 H_y^{1-\epsilon_0} ((-\infty,0) \times \mathbb{R}^2\times \mathbb{T})}\\
= & \left\|e^{-it|\xi_n|^2} e^{ix\xi_n} (e^{it\Delta_{\mathbb{R}^2}} (e^{it_n\Delta_{\mathbb{R}^2}} \phi)(\cdot -x_n,y))(x-2t\xi_n) \right\|_{ L_{t,x}^4 H_y^{1-\epsilon_0} ((-\infty,0) \times \mathbb{R}^2\times \mathbb{T})}\\
=   & \ \|e^{i(t+t_n) \Delta_{\mathbb{R}^2 }} \phi\|_{L_{t,x}^4 H_y^{1-\epsilon_0}((-\infty,0)\times \mathbb{R}^2 \times \mathbb{T})}
=   \ \|e^{it\Delta_{\mathbb{R}^2  }} \phi\|_{ L_{t,x}^4 H_y^{1-\epsilon_0}((-\infty,t_n)\times \mathbb{R}^2 \times \mathbb{T})}
\to 0, \text{ as } n\to \infty.
 \end{align*}
Using Theorem \ref{th2.3}, we see that, for $n$ large enough,
\begin{align*}
\|u_n\|_{ L_{t,x}^4 H_y^{1-\epsilon_0}((-\infty, 0) \times \mathbb{R}^2 \times \mathbb{T})} \le 2 \delta_0 < \infty,
\end{align*}
which contradicts \eqref{eq3.13n0}.
The case $t_n \to \infty$ is similar.

{ \bf Case 2.}  $\sup\limits_{j} \limsup\limits_{n\to \infty} \|\phi^j\|_{L_x^2 H_y^{ 1}}^2 \le L_{max} - 2\delta$ for some $\delta > 0$.

By the definition of $L_{max}$, there exist global solution $v_n^j$ to
\begin{equation*}
\begin{cases}
i\partial_t v_n^j + \Delta_{\mathbb{R}^2\times \mathbb{T}}  v_n^j = |v_n^j|^2 v_n^j,\\
v_n^j(0,x,y) =  \frac1{\lambda_n^j}   e^{ix\xi_n^j}   \left(e^{it_n^j \Delta_{\mathbb{R}^2  }}   \phi^j\right)
\left(\frac{x-x_n^j}{\lambda_n^j},y\right),
\end{cases}
\end{equation*}
satisfying
\begin{align}\label{eq6.3}
\| v_n^j\|_{L_{t,x}^4 H_y^{1-\epsilon_0} } \lesssim_{L_{max},\delta} \|\phi^j\|_{L_x^2H_y^{1}}.
\end{align}
Put
\begin{align*}
u_n^J = \sum\limits_{j=1}^J v_n^j + e^{it\Delta_{\mathbb{R}^2 \times \mathbb{T}}} w_n^J.
\end{align*}
Then we have $u_n^J(0) = u_n(0)$. We claim that for sufficiently large $J$ and $n$, $u_n^J$ is an approximate solution to $u_n$ in the sense of the Theorem \ref{le2.6}. Then we have the finiteness of $\|u_n\|_{L_{t,x}^4 H_y^{1-\epsilon_0}(\mathbb{R} \times \mathbb{R}^2 \times \mathbb{T} ) }$, which contradicts with \eqref{eq3.13n0}.

To verify the claim, we only need to check that $u_n^J$ satisfies the following properties:

$(i)$ $\limsup\limits_{n\to \infty} \|u_n^J\|_{ L_t^4 L_x^4 H_y^{1-\epsilon_0} } \lesssim_{L_{max},\delta} 1$, uniformly in $J$;

$(ii)$ $  \limsup\limits_{n\to \infty} \|e_n^J\|_{ L_{t,x}^\frac43 H_y^{1-\epsilon_0} } \to 0$, as $ J\to J^* $, where $e_n^J = (i\partial_t + \Delta_{\mathbb{R}^2\times \mathbb{T}})u_n^J - |u_n^J|^2 u_n^J$.

The verification of $(i)$ relies on the asymptotic decoupling of the nonlinear profiles $v_n^j$, which we record in the following two lemmas.
\begin{lemma}[Orthogonality]
Suppose that two frames $\Gamma^j = (t_n^j, x_n^j, \xi_n^j, \lambda_n^j)$, $\Gamma^k = (t_n^k, x_n^k, \xi_n^k, \lambda_n^k)$ are orthogonal,
then for $\psi^j,\, \psi^k \in C_0^\infty(\mathbb{R}\times \mathbb{R}^2 \times \mathbb{T})$,
\begin{align*}
\left\| \frac1{\lambda_n^j} e^{ix\xi_n^j} \left(  \langle \nabla_y \rangle^{1-\epsilon_0} \psi^j\right) \left(\frac{t-t_n^j}{(\lambda_n^j)^2} ,\frac{x-x_n^j}{\lambda_n^j}, y\right) \cdot \frac1{\lambda_n^k} e^{ix\xi_n^k} \left(\langle \nabla_y  \rangle^{1-\epsilon_0} \psi^k\right) \left(\frac{t-t_n^k}{(\lambda_n^k)^2}, \frac{x-x_n^k}{\lambda_n^k},y\right)\right\|_{L_{t,x,y}^2  } &  \to 0,
\text{  as } n\to \infty.
\end{align*}
\end{lemma}
Similarly to the proof in \cite{HP} to deal with the quintic nonlinear Schr\"odinger equation on $\mathbb{R}\times \mathbb{T}^2$,
we can obtain the following lemma from Proposition \ref{pr5.9}. We also refer to \cite{CMZ} for similar argument.
\begin{lemma}[Decoupling of nonlinear profiles]\label{le6.3}
Let $v_n^j$ be the nonlinear solutions defined above, then for $j\ne k$,
\begin{align}\label{eq7.29}
&  \left\|\langle \nabla_y  \rangle^{1-\epsilon_0}  v_n^j \cdot  \langle \nabla_y
 \rangle^{1-\epsilon_0} v_n^k   \right\|_{L_{t,x,y}^2} \to 0,\ 
\text{ and }   
\left \|v_n^k \cdot \langle \nabla_y  \rangle^{1-\epsilon_0} v_n^j\right\|_{L_{t,x,y}^2} \to 0,
 \text{ as } n\to \infty.
\end{align}
\end{lemma}
Let us verify claim $(i)$ above. We see
\begin{align*}
\|u_n^J \|_{ L_{t,x}^4 H_y^{1-\epsilon_0} }     & \le \left\|\sum\limits_{j=1}^J v_n^j \right\|_{  L_{t,x}^4 H_y^{1-\epsilon_0} } + \left \|e^{it\Delta_{\mathbb{R}^2 \times \mathbb{T}}} w_n^J \right \|_{ L_{t,x}^4 H_y^{1-\epsilon_0}}.
\end{align*}
By \eqref{eq6.3} and Lemma \ref{le6.3}, we have
\begin{align*}
   \left\|\sum\limits_{j=1}^J v_n^j \right\|_{  L_{t,x}^4 H_y^{1-\epsilon_0}}^4
 \lesssim  & \left(\sum\limits_{j=1}^J\left\|    v_n^j     \right\|_{L_{t,x}^4 H_y^{1-\epsilon_0}}^2 +
 \sum\limits_{j\ne k} \left\|    \langle \nabla_y  \rangle^{1-\epsilon_0} v_n^j \cdot  \langle \nabla_y
 \rangle^{1-\epsilon_0} v_n^k     \right\|_{L_{t,x,y}^2}\right)^2\\
 \lesssim &  \left(\sum\limits_{j=1}^J \|\phi^j\|_{L_x^2H_y^{ 1}}^2 + o_J(1)\right)^2.
\end{align*}
Since the $L_x^2 H_y^1$ norm decoupling implies
\begin{align*}
\limsup\limits_{n\to \infty}  \sum\limits_{j=1}^J \|\phi^j\|_{L_x^2H_y^{ 1}}^2 \le L_{max},
\end{align*}
together with \eqref{eq5.3}, we obtain
\begin{align}\label{eq6.5}
\lim\limits_{J\to J^*} \limsup\limits_{n\to \infty} \|  u_n^J\|_{L_t^4 L_x^{ 4} H_y^{1-\epsilon_0}} \lesssim_{L_{max}, \delta} 1.
\end{align}
It remains to check property $(ii)$ above, by the definition of $u_n^J$, we decompose
\begin{align*}
e_n^J & = (i\partial_t + \Delta_{\mathbb{R}^2\times \mathbb{T}} )u_n^J - |u_n^J|^2 u_n^J\\
      & = \sum\limits_{j=1}^J |v_n^j|^2 v_n^j - \left|\sum\limits_{j=1}^J v_n^j\right|^2 \sum\limits_{j=1}^J v_n^j
      + |u_n^J- e^{it\Delta_{\mathbb{R}^2\times \mathbb{T}}} w_n^J|^2 (u_n^J- e^{it\Delta_{\mathbb{R}^2\times \mathbb{T}}} w_n^J) - |u_n^J|^2 u_n^J.
\end{align*}
First consider
\begin{align*}
\sum\limits_{j=1}^J |v_n^j|^2 v_n^j - \left|\sum\limits_{j=1}^J v_n^j\right|^2 \sum\limits_{j=1}^J v_n^j.
\end{align*}
Thus by the fractional chain rule, Minkowski, H\"older, Sobolev, \eqref{eq7.29} and \eqref{eq6.3},
\begin{align}\label{eq4.832}
& \left\| \sum\limits_{j=1}^J |v_n^j|^2 v_n^j - \left|\sum\limits_{j=1}^J v_n^j\right|^2 \sum\limits_{j=1}^J v_n^j\right\|_{
L_{t,x}^4 H_y^{1-\epsilon_0}}\\
\sum\limits_{j\ne k} \|v_n^k \langle \nabla_y \rangle^{1-\epsilon_0} v_n^j \|_{L_{t,x,y}^2} \|v_n^k\|_{L_{t,x}^4 H_y^{1-\epsilon_0}}
+ \sum\limits_{j\ne k} \|v_n^k \langle \nabla_y  \rangle^{1-\epsilon_0} v_n^j \|_{L_{t,x,y}^2} \|v_n^j\|_{L_{t,x}^4 H_y^{1-\epsilon_0} }
\lesssim o_J(1), \text{ as } n\to \infty.\notag
\end{align}
We now estimate $\left|u_n^J  - e^{it\Delta_{\mathbb{R}^2 \times \mathbb{T}}} w_n^J\right|^2(u_n^J  - e^{it\Delta_{\mathbb{R}^2 \times \mathbb{T}}} w_n^J) - |u_n^J|^2 u_n^J$.
By the fractional chain rule, H\"older, Sobolev, we have
\begin{align*}
& \left\|\left|u_n^J- e^{it\Delta_{\mathbb{R}^2 \times \mathbb{T}}} w_n^J\right|^2\left(u_n^J  - e^{it\Delta_{\mathbb{R}^2 \times \mathbb{T}}} w_n^J\right) - |u_n^J|^2 u_n^J\right \|_{  L_t^\frac43 L_x^\frac43 H_y^{1-\epsilon_0} }\\
\lesssim & \left\|  \left(\|u_n^J\|_{H_y^{1-\epsilon_0}}^2 + \|e^{it\Delta_{\mathbb{R}^2\times \mathbb{T}}} 
w_n^J \|_{H_y^{1-\epsilon_0}}^2 \right) \|e^{it\Delta_{\mathbb{R}^2\times \mathbb{T}}} w_n^J\|_{H_y^{1-\epsilon_0}} \right\|_{L_{t,x}^\frac43} \\
 \lesssim & \left(\|u_n^J\|_{L_{t,x}^4 H_y^{1-\epsilon_0}}^2 + \|e^{it\Delta_{\mathbb{R}^2 \times \mathbb{T}}} w_n^J \|_{L_{t,x}^4 H_y^{1-\epsilon_0}}^2 \right)
\|e^{it\Delta_{\mathbb{R}^2\times \mathbb{T}}} w_n^J\|_{L_{t,x}^4 H_y^{1-\epsilon_0}  }.
 \end{align*}
Using \eqref{eq6.5}, and the decay property \eqref{eq6.1}, we get
\begin{align*}
\limsup\limits_{n\to \infty} \left\||u_n^J- e^{it\Delta_{\mathbb{R}^2 \times \mathbb{T}}} w_n^J|^2( u_n^J- e^{it\Delta_{\mathbb{R}^2 \times \mathbb{T}}} w_n^J  ) - |u_n^J|^2 u_n^J \right\|_{  L_{t,x}^4 H_y^{1-\epsilon_0}} \to 0, \text{ as }  J\to J^*.
\end{align*}
\end{proof}
Arguing as in \cite{CMZ}, the proof of Proposition \ref{pr7.1} implies the following result:
\begin{theorem}[Existence of the almost-periodic solution] \label{co4.727} 
Assume that $L_{max} < \infty$, then there exists $u_c\in C_{t }^0 H_{x,y}^1(\mathbb{R}\times \mathbb{R}^2 \times \mathbb{T})$ solving \eqref{eq1.1} satisfying
\begin{align}
\sup\limits_{t\in \mathbb{R}} \|u_c(t)\|_{L_x^2H_y^1(\mathbb{R}^2 \times \mathbb{T})}^2 = L_{max},\
\|u_c\|_{L_{t,x}^4 H_y^{1-\epsilon_0}(\mathbb{R}\times \mathbb{R}^2 \times \mathbb{T})} = \infty.
\end{align}
Furthermore, $u_c$ is almost periodic in the sense that
$\forall \eta > 0$, there is $C(\eta) > 0$ such that
\begin{align}\label{eq4.1735}
\int_{|x+x(t)| \ge C(\eta)} \|   u_c(t,x,y)\|_{H_y^1(\mathbb{T})}^2 \mathrm{d}x   < \eta
\end{align}
for all $t\in \mathbb{R}$, where $ {x}: \mathbb{R}\to \mathbb{R}^2$ is a Lipschitz function with $\sup\limits_{t\in \mathbb{R}} | {x'}(t)| \lesssim 1$.
\end{theorem}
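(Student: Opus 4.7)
The strategy is standard once the Palais--Smale condition modulo symmetries (Proposition \ref{pr7.1}) is available; it follows the blueprint of Kenig--Merle as implemented in \cite{CMZ}. First I would use the definition of $L_{max}$: by assumption, for every $n\ge 1$ there exists a global $H^1$ solution $u_n\in C^0_t H^1_{x,y}$ of \eqref{eq1.1} with $\sup_{t}\|u_n(t)\|_{L^2_x H^1_y}^2 \le L_{max}+\frac1n$ and $\|u_n\|_{L^4_{t,x}H^{1-\epsilon_0}_y(\mathbb{R}\times\mathbb{R}^2\times\mathbb{T})}\to\infty$. By continuity of $t\mapsto \|u_n\|_{L^4_{t,x}H^{1-\epsilon_0}_y((-\infty,t]\times\cdots)}$, I would choose $t_n\in\mathbb{R}$ so that the scattering norm of $u_n$ on $(-\infty,t_n)$ and on $(t_n,\infty)$ both diverge as $n\to\infty$. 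Applying Proposition \ref{pr7.1} to $\{u_n\}$ and $\{t_n\}$, after passing to a subsequence there exist $x_n\in\mathbb{R}^2$ and $w\in H^1(\mathbb{R}^2\times\mathbb{T})$ with $u_n(t_n,x+x_n,y)\to w(x,y)$ in $L^2_x H^1_y$, and in particular $\|w\|_{L^2_x H^1_y}^2=L_{max}$.

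Next I would let $u_c$ be the maximal lifespan $H^1$ solution of \eqref{eq1.1} with $u_c(0)=w$, which exists and lies in $C^0_t H^1_{x,y}$ by Theorem \ref{th2.433}. The stability theory (Theorem \ref{le2.6}) applied to the sequence of translates $u_n(t+t_n,\cdot+x_n,\cdot)$, which converge in $L^2_x H^1_y$ initial data to $w$, forces the scattering norms of $u_c$ on $(-\infty,0)$ and on $(0,\infty)$ to both be infinite; otherwise stability would give uniform bounds on $\|u_n\|_{L^4_{t,x}H^{1-\epsilon_0}_y}$ on one of these half-lines for large $n$, contradicting the choice of $t_n$. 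Mass conservation gives $\sup_t\|u_c(t)\|_{L^2_x H^1_y}^2\le L_{max}$ from $\liminf_n\sup_t\|u_n\|^2$, while the equality is forced because any strict inequality together with the definition of $L_{max}$ would contradict $\|u_c\|_{L^4_{t,x}H^{1-\epsilon_0}_y}=\infty$.

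For the almost-periodicity, let $\{\tau_n\}\subset\mathbb{R}$ be arbitrary. The translated solutions $\tilde u_n(t,x,y):=u_c(t+\tau_n,x,y)$ are still solutions with $\sup_t\|\tilde u_n\|_{L^2_x H^1_y}^2= L_{max}$ and with blow-up of the scattering norm on both half-lines (since $u_c$ does). Applying Proposition \ref{pr7.1} to $\{\tilde u_n\}$ at times $0$ yields a subsequence and translations $x(\tau_n)\in\mathbb{R}^2$ with $u_c(\tau_n,\cdot+x(\tau_n),\cdot)$ convergent in $L^2_x H^1_y$. Hence the orbit $\{u_c(t,\cdot+x(t),\cdot):t\in\mathbb{R}\}$ is precompact in $L^2_x H^1_y$, which is exactly the tightness property \eqref{eq4.1735} by the characterization of precompact subsets of $L^2_x H^1_y$. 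To upgrade $x(\cdot)$ to a Lipschitz function with $\sup_t|x'(t)|\lesssim1$, I would argue as in \cite{CMZ}: by compactness, $u_c$ is uniformly continuous from $\mathbb{R}$ into $L^2_x H^1_y$ modulo the translation $x(t)$, and combining this with a local Strichartz estimate on unit time intervals shows that on any time interval of length $\sim 1$ the spatial center $x(t)$ can move by at most a uniform constant, which gives the Lipschitz bound.

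The main obstacle in this plan is the verification that the almost-periodicity argument produces a single \emph{Lipschitz} modulation $x(t)$ rather than merely a set-valued compactness center; this requires showing that the modulation parameter produced by Palais--Smale is essentially unique (up to bounded translation) once $L_{max}$ has been fixed, and then converting the qualitative compactness at each time into the quantitative bound $|x(t)-x(s)|\lesssim|t-s|+1$ via the Duhamel formula and the local-in-time Strichartz estimate \eqref{eq1.321}. All other pieces---extraction of the minimizing sequence, extraction of the critical element, and the tightness \eqref{eq4.1735}---are direct consequences of Proposition \ref{pr7.1} and the stability theory.
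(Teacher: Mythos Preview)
Your proposal is correct and follows precisely the approach the paper intends: the paper does not give a detailed proof but simply writes ``Arguing as in \cite{CMZ}, the proof of Proposition \ref{pr7.1} implies the following result,'' and your sketch is exactly the standard Kenig--Merle extraction from the Palais--Smale condition that \cite{CMZ} carries out. One small terminological slip: the bound $\sup_t\|u_c(t)\|_{L^2_xH^1_y}^2\le L_{max}$ is not ``mass conservation'' (the $L^2_xH^1_y$ norm is not conserved); it comes instead from the convergence $u_n(\cdot+t_n,\cdot+x_n,\cdot)\to u_c$ in $C^0_tL^2_xH^1_y$ on compact time intervals, which follows from the stability theory with the persistence-of-regularity remark after Theorem \ref{le2.6}, together with $\sup_t\|u_n(t)\|_{L^2_xH^1_y}^2\to L_{max}$.
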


\section{Rigidity theorem}\label{se5}
In this section, we will exclude the almost-periodic solution in Theorem \ref{co4.727} by using the interaction Morawetz action with the weight function taken appropriately.
\begin{proposition}[Non-existence of the almost-periodic solution ]\label{pr7.3}
The almost-periodic solution $u_c$ in Theorem \ref{co4.727} does not exist. 
\end{proposition}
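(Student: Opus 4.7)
The plan is to carry out an interaction Morawetz argument on $\mathbb{R}^2\times\mathbb{T}$ with a Morawetz weight depending only on the $\mathbb{R}^2$-coordinate, so as to exploit the two-dimensional dispersion and treat the $\mathbb{T}$-direction as a passive internal variable. Using $2\pi$-periodicity in $y$, the projected mass density $\rho(t,x):=\int_{\mathbb{T}}|u_c(t,x,y)|^2\,\mathrm{d}y$ and projected momentum $P(t,x):=2\int_{\mathbb{T}}\Im(\overline{u_c}\nabla_x u_c)(t,x,y)\,\mathrm{d}y$ satisfy the two-dimensional continuity equation $\partial_t\rho+\nabla_x\cdot P=0$ on $\mathbb{R}\times\mathbb{R}^2$, so that $\rho$ plays the role of a mass density for a 2D cubic-like problem.

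I would introduce the interaction Morawetz action
\[
M(t) = \iint_{\mathbb{R}^2\times\mathbb{R}^2} \rho(t,x')\, P(t,x)\cdot\frac{x-x'}{|x-x'|}\,\mathrm{d}x\,\mathrm{d}x',
\]
which is uniformly bounded $|M(t)|\lesssim \|u_c\|_{L_t^\infty L_{x,y}^2}^3\,\|\nabla_x u_c\|_{L_t^\infty L_{x,y}^2}\le C(L_{max})$ by conservation of mass and energy. A standard computation of $M'(t)$ via the continuity and momentum equations for $\rho$ and $P$ produces a manifestly nonnegative kinetic quadratic form, as in the Colliander--Grillakis--Tzirakis/Planchon--Vega 2D identity, together with a nonnegative cubic contribution in the defocusing case; the $\Delta_y u_c$ contributions drop out after integration in $y$ by periodicity. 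Integrating in time yields
\[
\|u_c\|_{L_{t,x}^4 L_y^2(\mathbb{R}\times\mathbb{R}^2\times\mathbb{T})}^4 \; = \; \int_{\mathbb{R}}\int_{\mathbb{R}^2}\rho(t,x)^2\,\mathrm{d}x\,\mathrm{d}t \; \le \; C(L_{max}).
\]

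For the contradiction, I would combine this finite bound with almost-periodicity. A pigeonhole furnishes a sequence $t_n\to\infty$ with $\|u_c(t_n)\|_{L_x^4 L_y^2}\to 0$, and translation invariance in $x$ of this norm yields $\|u_c(t_n,\cdot+x(t_n),\cdot)\|_{L_x^4 L_y^2}\to 0$. By the spatial concentration estimate in Theorem~\ref{co4.727} together with the compact embedding $H_y^1(\mathbb{T})\hookrightarrow L_y^2(\mathbb{T})$ and Rellich--Kondrachov in $x$, the translated orbit $\{u_c(t,\cdot+x(t),\cdot)\}_{t\in\mathbb{R}}$ is precompact in $L_{x,y}^2$, so after passing to a subsequence $u_c(t_n,\cdot+x(t_n),\cdot)\to u_\infty$ strongly in $L_{x,y}^2$; mass conservation forces $\|u_\infty\|_{L_{x,y}^2}=\mathcal{M}(u_c)^{1/2}>0$, while Fatou gives $\|u_\infty\|_{L_x^4 L_y^2}\le\liminf_n\|u_c(t_n)\|_{L_x^4 L_y^2}=0$, hence $u_\infty\equiv0$, a contradiction. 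The principal difficulty lies in the first step: the weight $|x|$ is borderline in two dimensions and must be carefully regularized (e.g.\ $a(x)=(|x|^2+\varepsilon^2)^{1/2}$ with $\varepsilon\downarrow 0$), and one must verify that the Morawetz kinetic quadratic form on the waveguide remains nonnegative after integration in $y$ so that the cubic positivity can be exploited, without any sign-indefinite contribution arising from the $\nabla_y u_c$ terms.
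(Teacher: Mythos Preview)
Your overall strategy---an interaction Morawetz identity with weight depending only on the $\mathbb{R}^2$-variable, combined with almost-periodicity---is exactly the paper's approach. However, there is a concrete error in the Morawetz output you claim.

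In two spatial dimensions the Colliander--Grillakis--Tzirakis/Planchon--Vega interaction Morawetz identity does \emph{not} yield a bound on $\|\rho\|_{L^2_{t,x}}^2=\|u_c\|_{L^4_{t,x}L^2_y}^4$; that is the $d=3$ output. In $d=2$ one obtains instead
\[
\int_{-T_0}^{T_0}\big\||\nabla_x|^{1/2}\bigl(|u_c(t,\cdot,\cdot)|^2\bigr)\big\|_{L^2_{x,y}}^2\,\mathrm{d}t\ \lesssim\ \sup_{|t|\le T_0}|M(t)|\ \lesssim\ 1,
\]
because for $a(x)=|x|$ on $\mathbb{R}^2$ one has $-\Delta_x^2 a=-|x|^{-3}<0$, so the naive ``mass-at-origin'' term carries the wrong sign and must be absorbed via the tensor-product rewriting, which produces the half-derivative. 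Your regularization $a=(|x|^2+\varepsilon^2)^{1/2}$ does not cure this; it is a genuinely two-dimensional obstruction.

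The gap is repairable. With the correct bound on $\||\nabla_x|^{1/2}(|u_c|^2)\|_{L^2_{t,x,y}}$, the paper proceeds directly: by H\"older and the Sobolev embedding $\dot H^{1/2}_x(\mathbb{R}^2)\hookrightarrow L^4_x$ one has, for each $t$,
\[
\int_{|x+x(t)|\le C(\eta)}\|u_c(t,x,\cdot)\|_{L^2_y}^2\,\mathrm{d}x\ \lesssim\ C(\eta)^{3/2}\,\big\||\nabla_x|^{1/2}(|u_c(t)|^2)\big\|_{L^2_{x,y}},
\]
while almost-periodicity and mass conservation force the left side to be $\ge m_0/2$. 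Squaring and integrating on $[-T_0,T_0]$ gives $m_0^2\,T_0\lesssim_{m_0}1$, a contradiction as $T_0\to\infty$. Your pigeonhole route can also be salvaged: the correct Morawetz output gives a sequence $t_n$ with $\||\nabla_x|^{1/2}(|u_c(t_n)|^2)\|_{L^2_{x,y}}\to 0$, hence by Sobolev $\|u_c(t_n)\|_{L^8_xL^2_y}\to 0$, and then your precompactness argument (which, note, requires the $H^1_{x,y}$ bound from energy conservation in addition to the $L^2_xH^1_y$ tightness stated in Theorem~\ref{co4.727}) goes through with $L^8_xL^2_y$ in place of $L^4_xL^2_y$.
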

\begin{proof}
Define the interaction Morawetz action
\begin{align*}
M(t) = \int_{\mathbb{R}^2 \times \mathbb{T}} \int_{\mathbb{R}^2\times \mathbb{T}} \Im(\bar{u}_c(t,x,y) \nabla_x u_c(t,x,y) ) \nabla_x a(x-\tilde{x}) |u_c(t,\tilde{x},\tilde{y})|^2 \,\mathrm{d}x\mathrm{d}y \mathrm{d}\tilde{x} \mathrm{d}\tilde{y},
\end{align*}
where $a$ is a radial function defined on $\mathbb{R}^2$ with
\begin{align*}
a(r) =
\begin{cases}
\frac{r^2}{2r_0}(1+ \frac12 \log\frac{r_0}r),  \ & r < r_0,\\
r- \frac{r_0}2,                                 \ &  r \ge r_0,
\end{cases}
\end{align*}
$r_0$ is some positive constant.
We have $\Delta a \ge 0$ and $\|\frac3{4r_0} + \frac1{2r_0} \log\frac{r_0}r\|_{L^2(r \le r_0)} \lesssim 1$, and for $r > r_0$, $\Delta a   = \frac1r$.

Letting $r_0 \to 0$, we have $\forall\, T_0 > 0  $,
\begin{align}\label{eq5.032}
\int_{-T_0}^{T_0} \int_{\mathbb{R}^2 \times \mathbb{T}} \big||\nabla_x |^\frac12 (|u_c(t,x,y)|^2)\big|^2 \,\mathrm{d}x \mathrm{d}y \mathrm{d}t
\lesssim \int_{-T_0 }^{T_0}  M'(t) \,\mathrm{d}t \lesssim \sup_{t\in [-T_0,T_0] } |M(t)|.
\end{align}

By H\"older, Minkowski, Sobolev, and the fact $\mathbb{T}$ is compact, we have
\begin{align}\label{eq5.132}
\int_{|x + x(t)| \le C\big(\frac{m_0 }{100}\big)} \|u_c(t,x,y)\|_{L_y^2}^2 \,\mathrm{d}x
&  \lesssim C\left(\frac{m_0 }{100}\right)^\frac32 \|u_c(t,x,y)\|_{L_x^8 L_y^2}^2   \\
&  \lesssim C\left(\frac{m_0 }{100}\right)^\frac32 \|u_c(t,x,y)\|_{L_y^2 L_x^8  }^2  \notag\\
& \lesssim C\left(\frac{m_0 }{100}\right)^\frac32 \left\||\nabla_x|^\frac12 (|u_c(t,x,y)|^2)\right\|_{  L_y^1 L_x^2 }\lesssim C\left(\frac{m_0 }{100}\right)^\frac32 \left\||\nabla_x|^\frac12(|u_c(t,x,y)|^2)\right\|_{L_{x,y}^2}, \notag
\end{align}
where $m_0  = \mathcal{M}(u_c)$.

By \eqref{eq4.1735} and conservation of mass, we have
\begin{align}\label{eq5.232}
\frac{m_0 }2 \le \int_{|x + x(t)|\le C\big(\frac{m_0 }{100}\big)} \|  u_c(t,x,y)\|_{L_y^2}^2 \,\mathrm{d}x.
\end{align}
By H\"older, the fact $|\nabla a|$ is bounded, and $u_c \in C_t^0 H_{x,y}^1(\mathbb{R}\times \mathbb{R}^2 \times \mathbb{T})$,
\begin{align}\label{eq5.432}
|M(t)|  &  \lesssim \|u_c\|_{L_t^\infty L_{x,y}^2}^3 \|\nabla_x u_c \|_{L_t^\infty L_{x,y}^2} \lesssim 1.
\end{align}
Therefore, $\forall \ T_0 > 0$, by \eqref{eq5.232}, \eqref{eq5.132}, \eqref{eq5.032} and \eqref{eq5.432},
\begin{align*}
m_0^2 T_0 =  \int_0^{T_0}  m^2_0 \,\mathrm{d}t & \lesssim \int_{-T_0}^{T_0} \left(\int_{|x + x(t)|\le C\big(\frac{m_0 }{100}\big)} \|u_c(t,x,y)\|_{L_y^2}^2 \,\mathrm{d}x \right)^2 \,\mathrm{d}t\\
                             & \lesssim C\Big(\frac{m_0 }{100}\Big)^3 \int_{-T_0}^{T_0} \left\||\nabla_x|^\frac12 (|u_c(t,x,y)|^2) \right\|_{L_{x,y}^2}^2 \,\mathrm{d}t \\
                             &  \sim   C\Big(\frac{m_0 }{100}\Big) \left\||\nabla_x|^\frac12(|u_c(t,x,y)|^2) \right\|_{L_{t,x,y}^2}^2
                           \lesssim  C\Big(\frac{m_0 }{100}\Big) \sup_{t\in [-T_0,T_0]} |M(t)| \lesssim C\Big(\frac{m_0 }{100}\Big).
\end{align*}
 Let $T_0 \to \infty$, we obtain a contradiction unless $u_c \equiv 0$, which is impossible due to
$\|u_c\|_{L_{t,x}^4 H_y^{1-\epsilon_0}(\mathbb{R}\times \mathbb{R}^2 \times \mathbb{T} )}   = \infty$.
\end{proof}


\begin{thebibliography}{99}

\bibitem{BV} P. B\'egout and A. Vargas, \emph{Mass concentration phenomena for the $L^2$-critical nonlinear Schr\"odinger
equation}, Trans. Amer. Math. Soc. {\bf359} (2007), no. 11, 5257-5282.

\bibitem{B} J. Bourgain, \emph{Fourier transform restriction phenomena for certain latice subsets and applications to nonlinear evolution equations. I. Schr\"odinger equations}, Geom. Funct. Anal. {\bf 3} (1993), 107-156.

\bibitem{Bo1} J. Bourgain, \emph{Refinements of Strichartz inequality and applications to 2d-NLS with critical nonlinearity}, Int. Math. Res. Not. (1998), 253-283.

\bibitem{CK} R. Carles and S. Keraani, \emph{On the role of quadratic oscillations in nonlinear Schr\"odinger equations. II. The $L^2$-critical case}, Trans. Amer. Math. Soc. {\bf 359} (2007), no. 1, 33-62.

\bibitem{C} T. Cazenave, \emph{Semilinear Schr\"odinger equations}, Courant Lecture Notes in Mathematics, \textbf{10}.
New York University, Courant Institute of Mathematical Sciences, New York; American Mathematical
Society, Providence, RI, 2003.

\bibitem{CMZ} X. Cheng, C. Miao, and L. Zhao, \emph{Global well-posedness and scattering for nonlinear Schr\"odinger equations with combined nonlinearities in the radial case}, J. Differential Equations {\bf 261} (2016),  no. 6, 2881-2934.

\bibitem{D2} B. Dodson, \emph{Global well-posedness and scattering for the defocusing, $L^2$-critical, nonlinear Schr\"odinger equation when $d = 2$}, Duke Math. J. Volume \textbf{165}, Number 18 (2016), 3435-3516

\bibitem{HP} Z. Hani and B. Pausader, \emph{On scattering for the quintic defocusing nonlinear Schr\"odinger equation on $\mathbb{R}\times \mathbb{T}^2$}, Comm. Pure Appl. Math. {\bf67} (2014), no. 9, 1466-1542.

\bibitem{HPTV} Z. Hani, B. Pausader, N. Tzvetkov, and N. Visciglia, \emph{Modified scattering for the cubic Schr\"odinger equation on product spaces and applications}, Forum of Mathematics, PI. (2015), Vol. 3, 1-63.

\bibitem{HTT} S. Herr, D. Tataru, and N. Tzvetkov, \emph{Global well-posedness of the energy-critical nonlinear
Schr\"odinger equation with small initial data in $H^1(\mathbb{T}^3)$}, Duke Math. J. {\bf159} (2011), no. 2,
329-349.

\bibitem{HTT1} S. Herr, D. Tataru, and N. Tzvetkov, \emph{Strichartz estimates for partially periodic solutions to
Schr\"odinger equations in $4d$ and applications}, J. Ang. Math. {\bf 690} (2014), 65-78.

\bibitem{IPS} A. D. Ionescu, B. Pausader, and G. Staffilani, \emph{On the global well-posedness of energycritical
Schr\"odinger equations in curved spaces}, Anal. PDE {\bf 5} (2012), no. 4, 705-746.

\bibitem{IP} A. D. Ionescu and B. Pausader, \emph{The energy-critical defocusing NLS on $\mathbb{T}^3$}, Duke Math. J. {\bf 161}
(2012), no. 8, 1581-1612.

\bibitem{IP1} A. D. Ionescu and B. Pausader, \emph{Global well-posedness of the energy-critical defocusing NLS on $\mathbb{R}\times \mathbb{T}^3$}, Comm. Math. Phys. {\bf 312} (2012), no. 3, 781-831.

\bibitem{KM} C. E. Kenig and F. Merle, \emph{Global well-posedness, scattering and blow-up for the energy-critical, focusing, non-linear Schr\"odinger equation in the radial case}, Invent. Math. {\bf 166} (2006), no. 3, 645-675.

\bibitem{Killip-Visan1}
R. Killip and M. Visan,
\emph{Nonlinear Schr\"odinger equations at critical regularity}. Proceedings for the Clay summer school ``Evolution Equations'', Eidgen\"ossische technische Hochschule, Z\"urich, 2008.

\bibitem{MV} F. Merle and L. Vega, \emph{Compactness at blow-up time for $L^2$ solutions of the critical nonlinear Schr\"odinger equation in 2D}, Internat. Math. Res. Notices(1998), no. 8, 399-425.

\bibitem{S} T. Schneider, \emph{Nonlinear optics in telecommunications}, Springer, Berlin, 2004.

\bibitem{SL} A. W. Snyder and J. Love, \emph{Optical waveguide theory}, Springer, US, 1983.

\bibitem{TT} H. Takaoka and N. Tzvetkov, \emph{On 2D nonlinear Schr\"odinger equations with data on $\mathbb{R}\times \mathbb{T}$}, J. Funct. Anal. {\bf 182}(2001), 427-442.

\bibitem{T1} T. Tao, \emph{A sharp bilinear restriction estimate for paraboloids}, Geom. Funct. Anal. {\bf 13}(2003), 1359-1384.

\bibitem{T2} T. Tao, \emph{Nonlinear dispersive equations: local and global analysis}, CBMS Regional Conference
Series in Mathematics, 106. American Mathematical Society, Providence, R.I., 2006.

\bibitem{Ta} M. Tarulli, \emph{Well-posedness and scattering for the mass-energy NLS on $\mathbb{R}^N \times \mathbb{M}^K$},  arXiv: 1510.01710

\bibitem{TV} N. Tzvetkov and N. Visciglia, \emph{Small data scattering for the nonlinear Schr\"odinger equation on product spaces}, Comm. Partial Differential Equations. {\bf 37} (2012), no. 1, 125-135.

\bibitem{TV2} N. Tzvetkov and N. Visciglia, \emph{Well-posedness and scattering for NLS on $\mathbb{R}^d\times \mathbb{T}$ in the energy space}, arXiv: 1409.3938

\bibitem{YZ} K. Yang and L. Zhao, \emph{Global well-posedness and scattering for mass-critical, defocusing,  infinite dimensional vector-valued resonant nonlinear Schr\"odinger system}, arXiv: 1704. 08976
\end{thebibliography}
\end{document}